\documentclass{amsart}
\usepackage{etex}
 \usepackage[foot]{amsaddr}
\usepackage{graphicx}
\usepackage{epstopdf}

\usepackage{cite}
\usepackage{tikz, tikz-cd, tkz-graph}

\raggedbottom
\usepackage{xcolor}

\usepackage{hyperref}
\hypersetup{
  colorlinks,
  linkcolor={red!50!black},
  citecolor={blue!50!black},
  urlcolor={blue!80!black}
}

\usepackage[english]{babel}
\usepackage{amsmath}
\usepackage{amssymb}
\usepackage{amsthm}
\usepackage{xypic}
\usepackage{longtable}
\usepackage{array}

 \usepackage[enableskew]{youngtab}

\usepackage{epsfig}
\usepackage{hyperref}
\usepackage{enumitem}
\usepackage{booktabs}
 \usepackage{longtable} 
 \usepackage{pdflscape} 
 \usepackage{colortbl} 
 \usepackage{arydshln} 
 \usepackage{calc} 
\Yboxdim{6pt}

\graphicspath{{./images/}}

\newenvironment{claim}[1]{\par\noindent\underline{Claim:}\space#1}{}

\newsavebox{\cimatrixbox}
\newlength{\cimatrixheight}
\newlength{\cimatrixshift}

\newcommand{\evnrow}{\rowcolor[gray]{0.95}}
\newcommand{\oddrow}{}


\newtheorem{thm}{\bf Theorem}[section]
\newtheorem{eg}[thm]{\bf Example}
\newtheorem{prop}[thm]{\bf Proposition}
\newtheorem{cor}[thm]{\bf Corollary}
\newtheorem{rem}[thm]{\bf Remark}
\newtheorem{mydef}[thm]{\bf Definition}
\newtheorem{lem}[thm]{\bf Lemma}

\newcommand{\CC}{\mathbb{C}}
\newcommand{\RR}{\mathbb{R}}
\newcommand{\ZZ}{\mathbb{Z}}
\newcommand{\QQ}{\mathbb{Q}}
\newcommand{\NN}{\mathbb{N}}
\newcommand{\PP}{\mathbb{P}}
\newcommand{\LL}{\mathbb{L}}

\newcommand{\br}{\mathbf{r}}

\DeclareMathOperator{\Rep}{\mathrm{Rep}}

\DeclareMathOperator{\Hom}{\mathrm{Hom}}
\DeclareMathOperator{\GL}{\mathrm{GL}}
\DeclareMathOperator{\Fl}{\mathrm{Fl}}

\DeclareMathOperator{\Gr}{\mathrm{Gr}}

\DeclareMathOperator{\Proj}{\mathrm{Proj}}
\DeclareMathOperator{\Pl}{\mathrm{Pl}}

\renewcommand{\emptyset}{\varnothing}

\newtheorem{conj}{Conjecture}[section]

\newcolumntype{C}[1]{>{\centering\let\newline\\\arraybackslash\hspace{0pt}}m{#1}}
\title{Laurent polynomial mirrors for quiver flag zero loci}
\author{Elana Kalashnikov}
\address[E.~Kalashnikov]{{Department of Mathematics \\ 
University of Waterloo \\ 
Waterloo, Ontario}}
\email{e2kalash@uwaterloo.ca}

\begin{document}
\begin{abstract}
The classification of Fano varieties is an important open question, motivated in part  by the MMP. Smooth Fano varieties have been classified up to dimension three: one interesting feature of this classification is that they can all be described as certain subvarieties in GIT quotients; in particular, they are all either toric complete intersections (subvarieties of toric varieties) or quiver flag zero loci (subvarieties of quiver flag varieties). There is a program to use mirror symmetry to classify Fano varieties in higher dimensions. Fano varieties are expected to correspond to certain Laurent polynomials under mirror symmetry; given such a Fano toric complete intersections, one can produce a Laurent polynomial via the Hori--Vafa mirror. In this paper, we give a method to find Laurent polynomial mirrors to Fano quiver flag zero loci in $Y$-shaped quiver flag varieties. To do this, we generalize the Gelfand--Cetlin degeneration of type A flag varieties to Fano $Y$-shaped quiver flag varieties, and describe these degenerations as toric quiver moduli spaces.  We find conjectural mirrors to 99 four dimensional Fano quiver flag zero loci, and check them up to 20 terms of the period sequence.
\end{abstract}
\maketitle

\section{Introduction}\label{sec:intro}
The classification of smooth Fano varieties is an important open question, motivated, in part, by the role Fano varieties play in the minimal model program. The classification of Fano varieties is only known up to dimension three: in three dimensions, this was a major undertaking by Iskovskih \cite{Iskovskih1977} and Mori--Mukai \cite{MoriMukai1982,MoriMukai2003}. One of the interesting features of the list of Fano threefolds is that all can be found as subvarieties cut out by sections of representation theoretic bundles on a GIT quotient $V/\!/G$ \cite{CoatesCortiGalkinKasprzyk2016}.   

A representation theoretic bundle on a smooth GIT quotient $V/\!/G$ is a bundle produced from a representation $E$ of $G$: letting $G$ act diagonally on the product, we obtain a bundle  $E_G:=(E \times V)/\!/G \to V/\!/G$. In the Mori--Mukai classification, the ambient GIT quotient $V/\!/G$ is in fact either a toric variety or a quiver flag variety. When the ambient GIT quotient is a toric variety, a subvariety cut out by a section of a representation theoretic bundle is a toric complete intersection. When the ambient GIT quotient is a quiver flag variety, such a subvariety is called a \emph{quiver flag zero locus}. 

The classification of Fano fourfolds in unknown. Nevertheless, because both toric varieties and quiver flag varieties are combinatorially described, the search for all Fano representation-theoretic subvarieties of toric varieties and quiver flag varieties with bounded codimension is a finite problem. The search for such Fano fourfolds with codimension at most 4 was completed in \cite{CoatesKasprzykPrince2015} for toric varieties and \cite{kalashnikov} for quiver flag varieties. 

Under mirror symmetry, Fano varieties are conjectured to correspond to certain Laurent polynomial mirrors. A Fano variety $X$ is mirror to a Laurent polynomial $f$ if two power series agree. Laurent polynomial mirrors are known for all Fano threefolds and the toric Fano fourfolds found by \cite{CoatesKasprzykPrince2015}. In this paper, we find conjectural Laurent polynomial mirrors to $99$ of the $141$ Fano fourfolds found in \cite{kalashnikov}, and verify that the first 20 terms of the period are correct. The theoretical basis of this result is a generalization of the famous Gelfand--Cetlin toric degeneration to $Y$-shaped quiver flag varieties, which is of independent interest. The degenerate toric variety is a quiver moduli space with nice combinatorial properties. 

We now describe the background and results of the paper in more detail. 

\subsection*{The mirror symmetry conjecture}
A program of Coates,  Corti, Galkin, Golyshev, Kasprzyk and others is to use mirror symmetry to classify Fano varieties. Mirror symmetry for Fano varieties is conjectured \cite{kasprzyktveiten} to be of the following form (see \cite[Conjecture 5.1]{kasprzyktveiten} for a more precise statement):
\begin{conj} Fano varieties up to deformation to correspond to rigid maximally mutable Laurent polynomials up to mutation. 
\end{conj}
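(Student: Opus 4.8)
The plan is to be candid at the outset: this statement is a deep open conjecture that subsumes the still-unknown classification of Fano varieties in dimension at least four, so a complete proof is out of reach. What one can realistically aim for is to establish the conjectured correspondence on the class of Fanos at hand, namely Fano quiver flag zero loci in $Y$-shaped quiver flag varieties, by constructing an explicit Laurent polynomial for each such Fano and verifying that it is rigid, maximally mutable, and mirror in the sense of periods. I would organise the work around four steps, the last of which is where the genuine difficulty lies.

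First, I would produce the candidate Laurent polynomial. The Hori--Vafa recipe requires a toric model of the ambient space, so the key geometric input is a degeneration of each $Y$-shaped quiver flag variety to a Gorenstein toric variety, generalising the Gelfand--Cetlin degeneration of type $A$ flag varieties. Having realised this degeneration as a canonical toric quiver variety, I would apply the Hori--Vafa mirror to the toric limit and then incorporate the data of the representation-theoretic bundle cutting out the zero locus to obtain a Laurent polynomial $f$. Second, I would check that $f$ is a mirror by matching periods: the classical period $\pi_f(t) = \sum_d c_d t^d$, where $c_d$ is the constant term of $f^d$, should agree with the regularised quantum period of the Fano zero locus. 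The quantum period on the geometric side is computed via the Abelian/non-Abelian correspondence together with a quantum Lefschetz argument for the zero locus; since closed-form matching is typically infeasible, I would verify the identity term by term, up to a controlled number of terms of the period sequence.

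Third, I would verify the combinatorial conditions: that $f$ is rigid (its coefficients are pinned down up to mutation, with no free parameters) and maximally mutable (it admits the full expected family of mutations, consistent with the singularity content of the toric degeneration). Finally, to upgrade the construction to a correspondence one would need to match deformation classes of Fanos with mutation classes of such Laurent polynomials.

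The hard part is precisely this last step, and it is where the conjecture genuinely resides. Even granting an explicit mirror for every Fano in the family, it is not known that the quantum period is a complete deformation invariant, so period matching alone cannot certify that distinct deformation families map to distinct mutation classes, nor the converse. Proving the bijectivity would require, on the geometric side, a classification statement (unavailable in dimension four and higher) and, on the combinatorial side, a classification of rigid maximally mutable Laurent polynomials up to mutation, together with a two-way dictionary relating mutations to the birational operations that connect Fanos within a deformation family. Consequently the realistic deliverable is strong evidence rather than a theorem: a conjectural Laurent polynomial mirror for each Fano quiver flag zero locus in the family, with the period identity verified to many terms.
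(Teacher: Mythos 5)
This statement is the open Kasprzyk--Tveiten conjecture, which the paper does not (and does not claim to) prove; your assessment that only evidence is attainable, and your plan for gathering it --- a Gelfand--Cetlin-type toric degeneration of $Y$-shaped quiver flag varieties realised as a toric quiver variety, the Hori--Vafa/Przyjalkowski method applied to the degenerate toric data together with the bundle data, period matching against the Abelian/non-Abelian correspondence up to finitely many terms, and a rigidity/maximal-mutability check --- is essentially the paper's own approach. The one small divergence is that the paper finds the Przyjalkowski output is often \emph{not} rigid maximally mutable (and has the wrong period) when the zero locus is not a complete intersection cut out by split bundles, in which case it replaces the output by the unique rigid maximally mutable Laurent polynomial supported on the same Newton polytope, rather than merely verifying the conditions on the Przyjalkowski output itself.
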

For the definition of rigid maximally mutable Laurent polynomials, see Definition \ref{defn:rigidmaxmutable}. A Fano variety $X$ is said to be \emph{mirror} to $f$ if the regularised quantum period  of $X$ equals the classical period of $f$. The classical period of a Laurent polynomial (Definition \ref{defn:classicalperiod}) is mutation invariant. 

In any dimension, a mirror of a Fano toric variety can be found via the Hori--Vafa mirror. For a Fano toric complete intersection with a \emph{convex partition} (a technical condition satisfied by all known smooth Fano toric complete intersections, see Definition \ref{defn:convexpartition}), a Laurent polynomial mirror can be found from the Hori--Vafa mirror via certain substitutions; this is called the Przyjalkowski method \cite{laurentinversion}.  In \cite{CoatesKasprzykPrince2015}, they found mirror rigid maximally mutable Laurent polynomial mirrors for all Fano toric complete intersections.  
\subsection*{Mirror symmetry and toric degenerations}
More generally, it is expected that if $f$ is a Laurent polynomial mirror to a Fano variety $Y$, then there is a toric degeneration of $Y$ to the toric variety given by the spanning fan of the Newton polytope of $f$. This was proven for toric complete intersections and mirrors given by the Przyjalkowski method by \cite{doranharder}. One can reverse this to produce conjectural mirrors to Fano varieties: given a toric degeneration to $X$ of $Y$, a candidate mirror to $Y$ is a rigid maximally mutable Laurent polynomial supported on the polytope of $X$. 
 
In this paper, we describe a method of producing Laurent polynomial mirrors to Fano quiver flag zero loci in quiver flag varieties of a restricted type, called $Y$-shaped quiver flag varieties. Since it is unclear how to find a toric degeneration of a quiver flag zero locus directly, we instead degenerate the ambient quiver flag variety to a toric variety, and then apply a generalization of the Przyjalkowski method to the degeneration of the quiver flag zero locus. 

This method depends on having very good combinatorial control over the degeneration of the ambient quiver flag variety; the degeneration we present is a generalization of the beautiful Gelfand--Cetlin toric degeneration of flag varieties,  and we can describe the degenerate toric variety as a quiver moduli space itself. This new combinatorial description allows us to write down a complete intersection in this toric variety corresponding to a zero locus in the quiver flag variety. When this complete intersection has a convex partition, we can then use the Przyjalkowski method to produce a Laurent polynomial, and adjust the coefficients so that it is rigid maximally mutable (confirmed by computer code of Kasprzyk). We conjecture that this Laurent polynomial is mirror to the original Fano quiver flag zero locus. In this way, we construct candidate rigid maximally mutable Laurent polynomial mirrors to 99 of the quiver flag zero loci found in \cite{kalashnikov} (and check this up to 20 terms of the period sequence). These can be found in Appendix \ref{ap:table}. Interestingly, the remaining 42 can all be modeled as subvarieties of $Y$-shaped quiver flag varieties, but the method fails either because of a lack of convex partition or or the toric degeneration has a smaller class group than expected.
 \subsection*{The Gelfand--Cetlin degeneration of a $Y$-shaped quiver flag variety}
The generalization of the Gelfand-Cetlin degeneration is constructed as follows. By re-writing the quiver flag variety using incidence conditions, we describe a quiver analogue of the Pl\"ucker algebra and Pl\"ucker relations of flag varieties; see definitions \ref{def:plalg} and \ref{def:plrel}. The quiver Pl\"ucker algebra gives coordinates on the quiver flag variety, and the quiver Pl\"ucker relations give the ideal cutting out the quiver flag variety from a product of projective spaces. 
\begin{thm} Let $M_Q$ be a $Y$-shaped quiver flag variety. There is a SAGBI basis of the quiver Pl\"ucker algebra. 
\end{thm} 
In the case of flag varieties, initial monomials of polynomials in the quiver Pl\"ucker algebra are indexed by semi-standard tableau. In the generalization to $Y$-shaped quivers, this is still true, albeit with skew tableau.  The restriction to $Y$-shaped quiver makes this possible.  

This determines a flat degeneration of $M_Q$ to a toric variety. In \cite{flagdegenerations}, the combinatorics of the degeneration for the flag case was described via ladder diagrams.  We define ladder diagrams for a $Y$-shaped quiver flag variety with quiver $Q$ and associate to them another quiver called the \emph{ladder quiver} $L(Q)$ with dimension $1$ assigned to each vertex. This quiver gives rise to a GIT quotient which is in this case a toric variety, which we call $X(Q)$. We show that $X(Q)$ is the degenerate toric variety coming from the SAGBI basis degeneration. 
\begin{thm} Let $M_Q$ be a quiver flag variety where $Q$ is a $Y$-shaped quiver. There is a flat toric degeneration of $M_Q$ to $X(Q)$, which is a Gorenstein toric variety.
\end{thm}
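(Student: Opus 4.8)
The plan is to combine the SAGBI basis furnished by the preceding theorem with standard deformation theory to produce the family, and then to verify the Gorenstein property by a direct analysis of the combinatorics of $X(Q)$, since the latter cannot be inferred from the smoothness of the general fibre.

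First I would fix the algebra. Let $A \subseteq \Cox(Y)$ be the subalgebra generated by the sections of the line bundles $\det(S_i^*)$, $i = 1,\dots,\rho$, restricted to $M_Q$; since these line bundles give the Pl\"ucker embedding of $Y$, the multi-$\mathrm{Proj}$ of $A$ recovers $M_Q$ in that embedding. By the preceding theorem this algebra admits a finite SAGBI basis with respect to a chosen term order $<$. The standard SAGBI degeneration (the Rees-algebra construction of Conca--Herzog--Valla, the subalgebra analogue of Sturmfels' Gr\"obner degeneration) then yields a $\mathbb{C}[t]$-flat family over $\mathbb{A}^1$ whose fibre over $t \neq 0$ is $M_Q$ and whose fibre over $t = 0$ is $\mathrm{Proj}$ of the initial algebra $\mathrm{in}_<(A)$; flatness is forced by constancy of the multigraded Hilbert function along the family. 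As $\mathrm{in}_<(A)$ is generated by monomials, $\mathrm{Proj}(\mathrm{in}_<(A))$ is a toric variety.

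Next I would identify this special fibre with $X(Q)$. The ladder diagrams attached to $Q$ index the leading monomials of the SAGBI basis, and the affine semigroup they generate records both these generators and the straightening relations among them. On the other side, $X(Q)$ is by construction the toric GIT quotient determined by the ladder quiver $L(Q)$, whose Cox ring is the polynomial ring on the arrows of $L(Q)$ graded by the vertex characters. I would match these two pieces of data: a bijection between SAGBI leading monomials and the generators of the $L(Q)$-semigroup, compatible with the $\Pic$-grading, together with a check that the toric ideal cut out by $L(Q)$ coincides with the binomial relations among the leading terms. A subtlety to dispatch here is normality: one must verify that the leading-term semigroup is saturated, so that $\mathrm{Proj}(\mathrm{in}_<(A))$ is already the normal toric variety $X(Q)$ rather than merely admitting it as a normalisation.

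Finally I would prove that $X(Q)$ is Gorenstein, which I expect to be the main obstacle. Gorenstein is the property that $-K_{X(Q)}$ is Cartier, a combinatorial condition asserting that the primitive generators of the rays in each maximal cone of the fan of $X(Q)$ lie on a common affine hyperplane of lattice height one; this cannot be deduced from the smooth, hence Gorenstein, general fibre $M_Q$, because the Gorenstein locus is open and openness runs the wrong way across the family. Instead I would read off the rays and maximal cones of $X(Q)$ directly from $L(Q)$ and exhibit, cone by cone, the dual lattice point pairing to one against every ray generator; equivalently, via Stanley's criterion, I would produce the distinguished interior element of the ladder semigroup whose shift reproduces the interior ideal. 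The Fano hypothesis on $M_Q$ is precisely what makes these height-one hyperplanes (respectively this interior element) exist, so the balancing data of the quiver $Q$ feeds directly into the Gorenstein verification. Translating the Fano condition on $Q$ into this reflexivity-type statement about the combinatorics of $L(Q)$ is the crux of the argument.
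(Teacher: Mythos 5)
Your proposal takes essentially the same route as the paper: the SAGBI basis from the preceding theorem gives the flat family, the special fibre is identified with $X(Q)$ by matching the binomial relations among the leading terms with the toric ideal of the ladder quiver (this is exactly the paper's Theorem \ref{thm:ladder}, where both kernels are seen to consist of binomials $m_1-m_2$ whose two unions of path supports coincide, and where your normality worry is dissolved because the special fibre is shown to \emph{equal} the embedded $X(Q)$, which is normal), and the Gorenstein property is verified combinatorially on the fan. The one organisational difference is in that last step: rather than constructing the height-one support function for $-K_{X(Q)}$ cone by cone, the paper first proves that each external-vertex divisor $D_i$ is Cartier, exhibiting an explicit linear functional $\delta_\sigma$ for every meander (maximal cone), and then invokes the identity $-K_{X(Q)}=\sum_i c_i D_i$, where the integers $c_i$ are defined by $-K_{M_Q}=\sum_i c_i c_1(W_i)$; this factorisation buys more than Gorensteinness, since the Cartier divisors $D_i$ give the nef line bundles $L_i$ used for the embedding into the product of projective spaces. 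Note also that the Fano hypothesis is not quite what creates the height-one data, as you suggest at the end: the $D_i$ are Cartier by pure ladder-quiver combinatorics and Gorensteinness then follows from the integrality of the $c_i$, while Fano-ness (positivity of the $c_i$) enters one step earlier, in proving that the minimal anti-cones are exactly the meanders, which is the cone description on which the Cartier verification rests.
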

This generalizes the Gelfand--Cetlin degeneration along with its ladder diagram to $Y$-shaped quiver flag varieties. Ladder diagrams for Grassmannians also play an important role in their cluster structure (see \cite{RW}), where they can be interpreted as a particularly special plabic graph, defining both a torus chart and a toric degeneration. In this torus chart, the Pl\"ucker coordinate mirror \cite{MarshRietsch} agrees with the mirror of the Gelfin-Cetlin toric degeneration. It would be interesting to see if this richer structure can be generalized to $Y$-shaped quiver flag varieties. 

\subsection*{Finding Laurent polynomial mirrors}
A complete intersection in a flag variety degenerates to a complete intersection in the degenerate toric variety. The situation is more complicated for higher rank bundles, as their sections cannot generally be described via Pl\"ucker coordinates (which is how the degeneration is constructed). However, via the proof of the above theorem, we can see that under this degeneration of a $Y$-shaped quiver, a representation theoretic vector bundle corresponds to a direct sum of rank one reflexive sheaves on the degenerate toric variety (see \S \ref{sec:mirrors} for a more precise statement).  The Pl\"ucker coordinate mirror $W_P$ of \cite{MarshRietsch} for the Grassmannian $\Gr(n,k)$ exhibits analogous behaviour: when expanded in the torus chart corresponding to the ladder diagram, some of the monomials of $W_P$ split into precisely $k$ monomials, and these $k$ monomials correspond to a choice of $k$ rank 1 reflexive sheaves on the degenerate toric variety giving rise to the tautological bundle on $\Gr(n,k)$ (see Appendix \ref{ap:plucker} for more details).

The ladder diagram gives a simple description of the associated Weil divisors. In this way, given a quiver flag zero locus in a $Y$-shaped quiver, we find a `complete intersection' in a toric variety. That is, we find  the GIT data of a singular toric variety, together with weights defining divisors. If the GIT and weight data has a convex partition, we can formally apply the Przyjalkowski method to the data of the weights and divisors, and arrive at a Laurent polynomial. Such a convex partition always exists if the zero locus is cut out by line bundles:
\begin{thm}[see Theorem \ref{extendprz}] For any Fano variety cut out of a Fano quiver flag variety by line bundles, the extended Przyjalkowski method produces a Laurent polynomial.
\end{thm}

When the quiver flag zero locus is not a complete intersection, even when there is a convex partition, the resulting Laurent polynomial is often \emph{not} rigid maximally mutable and is \emph{not} a mirror for the quiver flag zero locus. However, in all examples there is a unique rigid maximally mutable Laurent polynomial with the same Newton polytope whose period sequence is correct, up to 20 terms. This gives significant evidence towards rigid maximally mutable Laurent polynomials being the correct class of Laurent polynomials to consider.

\subsection*{Plan of the paper}
In \S \ref{sec:definitions}, we discuss mirror symmetry for Fano varieties and give the definitions needed to state it precisely and recall basic definitions and constructions for quiver flag varieties. In \S \ref{sec:SAGBI}, we introduce the class of $Y$-shaped quiver flag varieties, and describe a SAGBI basis for sections of certain line bundles on these quiver flag varieties, which produces a toric degeneration. In \S \ref{sec:ladder}, we define ladder diagrams and ladder quivers for $Y$-shaped quiver flag varieties, and prove that the SAGBI degeneration is precisely a degeneration to singular toric variety arising the from the ladder quiver. In the next section, \S \ref{sec:przy}, we introduce a generalization of the Przyjalkowski method and explain how to produce a Laurent polynomial from the data of the weights and divisors associated to a toric complete intersection.  In the final section of the paper \S \ref{sec:mirrors}, we explain how to use this degeneration to find candidate Laurent polynomial mirrors for quiver flag zero loci with a convex partition.  Finally, we give one example of a degeneration and mirror outside of the family of $Y$-shaped quiver flag varieties: here, the degeneration is to a \emph{bound} ladder quiver. In Appendix \ref{ap:plucker}, we discuss a connection to the Pl\"ucker coordinate mirror. In Appendix \ref{ap:table}, we give a table with the mirrors to 99 of the quiver flag zero loci found in \cite{kalashnikov}, produced using the method outlined in the paper. 
\subsection*{Acknowledgments}
The author is very grateful for helpful conversations with Tom Coates, Charles Doran, Alexander Kasprzyk, Thomas Prince, and Lauren Williams. 
\section{Mirror symmetry for Fano varieties}\label{sec:definitions}
In this section, we give the definitions required to more precisely state the conjectural equivalence between Fano varieties up to deformation and rigid maximally mutable Laurent polynomials up to mutation (see, for example \cite{AkhtarCoatesGalkinKasprzyk2012} and \cite{GalkinUsnich2010}). 
\subsection{The quantum period}
On the A side (the Fano side) the main invariant is the quantum period. For a more detailed introduction to Gromov--Witten invariants, quantum cohomology, and the quantum period, see for example \cite{CoatesCortiGalkinKasprzyk2016} (in particular, they record the formula for the quantum period of a Fano toric complete intersection).

Let $Y$ be a smooth projective variety. Given $n \in \ZZ_{\geq 0}$ and $\beta \in H_2(Y)$, let  $M_{0,n}(Y,\beta)$ be the moduli space of genus zero stable maps to $Y$ of class $\beta$, and with $n$ marked points~\cite{Kontsevich95}. While this space may be highly singular and have components of different dimensions, it has a \emph{virtual fundamental class} $[M_{0,n}(Y,\beta)]^{virt}$ of the expected dimension~\cite{BehrendFantechi97,LiTian98}. There are natural evaluation maps $ev_i: M_{0,n}(Y,\beta) \to Y$ taking the class of a stable map $f: C \to Y$ to $f(x_i)$, where $x_i \in C$ is the $i^{th}$ marked point. There is also a line bundle $L_i \to M_{0,n}(Y,\beta)$ whose fiber at $f: C \to Y$ is the cotangent space to $C$ at $x_i$. The first Chern class of this line bundle is denoted $\psi_i$. Define:
\begin{equation}
  \label{eq:GW}
  \langle \tau_{a_1}(\alpha_1),\dots,\tau_{a_n}(\alpha_n) \rangle_{n,\beta} = \int_{[M_{0,n}(Y,\beta)]^{virt}} \prod_{i=1}^n ev_i^*(\alpha_i) \psi_i^{a_i}
\end{equation}
where the integral on the right-hand side denotes cap product with the virtual fundamental class.  If $a_i=0$ for all $i$, this is called a (genus zero) Gromov--Witten invariant and the $\tau$ notation is omitted; otherwise it is called a descendant invariant. It is deformation invariant.

\begin{mydef}\label{defn:quantumperiod} Let $X$ be a smooth Fano variety. The \emph{quantum period} of $X$ is
\[G_X(t)=\sum_{i=0}^\infty a_i t^i, \hspace{5mm} a_i:=\sum_{\beta \in H_2(X,\ZZ) \mid \langle \beta, -K_X \rangle =i}\langle \tau_{i-2}([X])\rangle_{1,\beta}.\]
Here $[X]$ is the cohomology class Poincar\'e dual to the class of a point.

The \emph{regularized quantum period} is
\[\tilde{G}_X(t)=\sum_{i=0}^\infty i! a_i t^i\]
which can be interpreted as a Fourier--Laplace transform of $G_X(t)$. 
\end{mydef}
As descendent invariants are deformation invariant, so is the quantum period. A closed form is known for smooth Fano toric complete intersections. The Abelian/non-Abelian correspondence \cite{CiocanFontanineKimSabbah2008,Webb2018, kalashnikov} allows one to compute any number of terms of the quantum period of quiver flag zero loci. 
\subsection{The classical period and mutations}
\begin{mydef}\label{defn:classicalperiod} Let $f$ be a Laurent polynomial in $\CC[x_1^\pm,\dots,x_n^\pm]$. The \emph{classical period} of $f$, denoted $\pi_t(f)$ is
 \[\frac{1}{(2 \pi i)^n} \int_{(S^1)^n} \frac{1}{1-t f} \frac{d x_1}{x_1} \wedge \cdots \wedge \frac{d x_n}{x_n}. \]
 Let $X$ be a smooth Fano variety. A Laurent polynomial $f \in \CC[x_1^\pm,\dots,x_n^\pm]$ is \emph{mirror} to $X$ if the quantum period $X$ is equal to $\pi_f(t)$, the classical period of $f$. 

\end{mydef} 
Repeated applications of the residue theorem allows one to re-write $\pi_f(t)$ as $\sum_{i=0}^\infty a_i t^i,$ where $a_i$ is the constant term in the expansion of $f^i$. 

The class of Laurent polynomials mirror to terminal $\QQ$-factorial Fano varieties is conjectured to be \emph{rigid maximally mutable} Laurent polynomials (although here we only consider smooth Fano fourfolds). To define this class, we first need to define the notion of a mutation. We follow \cite{AkhtarCoatesGalkinKasprzyk2012}. Mutations are compositions of two types of operations -- $\GL(n,\ZZ)$ equivalences and certain birational transformations -- on a Laurent polynomial $f$. For the first, let $A=[a_{ij}] \in \GL(n;\ZZ).$ Then $A$ defines a $\GL(n,\ZZ)$ equivalence $\phi: (\CC^*)^n \to (\CC^*)^n$ via
 \[(x_1,\dots,x_n) \mapsto (\prod_{i=1}^n x_i^{a_{1i}},\dots,\prod_{i=1}^n x_i^{a_{ni}}). \]
This defines a new Laurent polynomial $\phi^*(f)$. For the second type of map, write
 \[f=\sum_{i} C_i(x_1,\dots,x_{n-1}) x_n^i \]
and suppose that $C_i$ is divisible by $h^{-i}$ for a fixed Laurent polynomial $h(x_1,\dots,x_{n-1})$ for all $i<0$ appearing above. This defines a birational transformation $\phi: (\CC^*)^n \dashrightarrow (\CC^*)^n$ via
 \[(x_1,\dots,x_n) \mapsto (x_1,\dots,x_{n-1}, h x_n). \]
 We obtain a new Laurent polynomial $g$ by pullback where
 \[g=\sum_{i} h^i C_i(x_1,\dots,x_{n-1}) x_n^i. \]
A one-step mutation is defined to be the composition of a $\GL(n,\ZZ)$ equivalence, a birational transformation of this type, and another $\GL(n,\ZZ)$ equivalence. A mutation is a composition of one-step mutations.

The polytope $P'$ is defined to be a mutation of $P$ if there exists $f,f'$ such that $P'$ is the Newton polytope of $f'$, $P$ is the Newton polytope of $f$, and $f'$ is a mutation of $f$. A mutation of $P$ is said to be compatible with a Laurent polynomial $f$ if the mutation of $P$ is induced from a mutation of $f$. 

\begin{mydef}[\cite{kasprzyktveiten}] \label{defn:rigidmaxmutable}
Let $f$ be a Laurent polynomial with Newton polytope $P$. We say that $f$ is \emph{rigid maximally mutable} if there is a set of mutations $S$ on $P$ such that $f$ is compatible with all mutations in $S$ and up to scaling, $f$ is the only Laurent polynomial compatible with all mutations in $S$. 
\end{mydef} 

For smooth Fano toric complete intersections in smooth toric varieties subject to some extra technical conditions (the ability to find a \emph{convex partition}), there is a well understood method of producing a Laurent polynomial mirror, called the Przyjalkowski method. This takes the Hori--Vafa mirror of the toric complete intersection, and after certain substitutions, produces a Laurent polynomial.  The paper \cite{laurentinversion} explains this in detail.  However, it isn't known whether this always produces a rigid maximally mutable Laurent polynomial (but there are no known counterexamples). One can formally follow the same method when the toric variety and the toric complete intersection are singular: below, we show examples when this method does not produce a rigid maximally mutable Laurent polynomial. The correct mirror, in this case, is the rigid maximally mutable Laurent polynomial with the same Newton polytope. In \S \ref{sec:przy}, we give an extension of the Przyjalkowski method. 

\subsection{Quiver flag varieties}
We briefly recall the construction of quiver flag varieties.  Quiver flag varieties are generalizations of Grassmannians and type A flag varieties, introduced by Craw \cite{Craw2011}.  Like flag varieties, they are GIT quotients and fine moduli spaces.  We begin by recalling Craw's construction.  A quiver flag variety $M(Q,\br)$ is determined by a quiver $Q$ and a dimension vector $\br$.  The quiver $Q$ is assumed to be finite and acyclic, with a unique source.  Let $Q_0 = \{0,1,\ldots,\rho\}$ denote the set of vertices of $Q$ and let $Q_1$ denote the set of arrows.  Without loss of generality, after reordering the vertices if necessary, we may assume that $0 \in Q_0$ is the unique source and that the number $n_{ij}$ of arrows from vertex $i$ to vertex $j$ is zero unless $i<j$.  Write $s$,~$t :  Q_1 \to Q_0$ for the source and target maps, so that an arrow $a \in Q_1$ goes from $s(a)$ to $t(a)$.  The dimension vector $\br = (r_0,\ldots,r_\rho)$ lies in $\NN^{\rho+1}$, and we insist that $r_0 = 1$. $M(Q,\br)$ is defined to be the moduli space of $\theta$-stable representations of  the quiver $Q$ with dimension vector $\br$. Here $\theta$ is a fixed stability condition defined below, determined by the dimension vector. 

Consider the vector space
\[
\Rep(Q,\br) =\bigoplus_{a \in Q_1}\Hom(\CC^{r_{s(a)}},\CC^{r_{t(a)}})
\]
and the action of $\GL(\mathbf{r}):=\prod_{i=0}^\rho \GL(r_i)$ on $\Rep(Q,\br)$ by change of basis.  The diagonal copy of $\GL(1)$ in $\GL(\br)$ acts trivially, but the quotient $G := \GL(\br)/\GL(1)$ acts effectively; since $r_0 = 1$, we may identify $G$ with $\prod_{i=1}^\rho \GL(r_i)$.  The quiver flag variety $M(Q,\br)$ is the GIT quotient $\Rep(Q,\br)/\!\!/_\theta\, G$, where the stability condition $\theta$ is the character of $G$ given by 
\begin{align*}
  \theta(g) = \prod_{i=1}^\rho \det(g_i), && g = (g_1,\ldots,g_\rho) \in \prod_{i=1}^\rho \GL(r_i).
\end{align*}
 For the stability condition $\theta$, all semistable points are stable.
 \begin{eg} When $\rho=1$, the quiver flag variety is the Grassmannian $\Gr(n_{01},r_1)$ of $r_1$-dimensional quotients in $\CC^{n_{01}}$. 
 \end{eg}
  \begin{eg} Consider the quiver below.
  \begin{center}
   \includegraphics[scale=0.5]{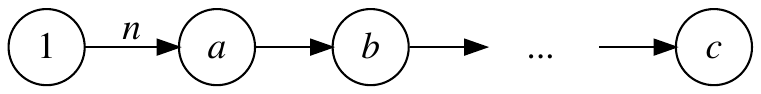}
   \end{center}
  The label in each vertex is the dimension assigned to that vertex. There is a unique choice of labeling for the vertices $\{0,1,\dots,\rho\}$. Here $n_{01}=n$ and $n_{i i+1}=1$. Then the resulting quiver flag variety is the partial flag variety of quotients of $\CC^n$ of dimension $a>b> \cdots>c$. We denote it $\Fl(n;a,b,\dots,c)$. 
   
 \end{eg}

Craw proved that quiver flag varieties are fine moduli spaces for $\theta$-stable representations of $Q$, Mori dream spaces, and towers of Grassmannians. As fine moduli spaces, they are equipped with $\rho$ tautological bundles denoted $W_1,\dots, W_\rho$. The rank of $W_i$ is $r_i$. 

The subvariety construction of flag varieties also generalizes to quiver flag varieties:
\begin{prop}\cite{kalashnikov} \label{prop:zl}
Let $M_Q:=M(Q,\br)$ be a quiver flag variety with $\rho>1.$ Then $M_Q$ is cut out of $P=\prod_{i=1}^\rho \Gr(H^0(M_Q,W_i),r_i)$ by a tautological section of 
\[
E=\bigoplus_{a \in Q_1, s(a) \neq 0} S_{s(a)}^* \otimes F_{t(a)}
\]
where $S_i$ and $F_i$ are the pullbacks to $P$ of the tautological sub-bundle and quotient bundle on the $i^{th}$ factor of $P$. 
\end{prop}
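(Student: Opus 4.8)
The plan is to exploit the fact that $M_Q$ is a fine moduli space, and to match the zero scheme of the tautological section of $E$ with the moduli functor of $\theta$-stable representations of $Q$. Throughout I take $\Gr(V,k)$ to parametrise $k$-dimensional \emph{quotients} of $V$, with tautological sequence $0 \to S \to V\otimes\mathcal{O} \to Q \to 0$, so that $\rk Q = k$; this is what makes the bundle $S^*\otimes Q$ the natural one.

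First I would recall the tautological bundles $W_0=\mathcal{O}, W_1,\dots,W_\rho$ carried by the universal representation, with $\rk W_i = r_i$, together with the universal arrow maps $\phi_a\colon W_{s(a)} \to W_{t(a)}$. Using Craw's presentation of $M_Q$ as a tower of Grassmannian bundles, each $W_j$ is a quotient of $\bigoplus_{a\colon t(a)=j} W_{s(a)}$ and hence (inductively) globally generated; pushing the universal quotient forward along each bundle in the tower, and using that the higher direct images vanish on the Grassmannian fibres, I would establish the inductive identity $H^0(M_Q,W_j) = \bigoplus_{a\colon t(a)=j} H^0(M_Q,W_{s(a)})$, with base case $H^0(M_Q,W_0)=\CC$. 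Global generation then yields, for each $i$, a morphism $f_i\colon M_Q \to \Gr(H^0(M_Q,W_i),r_i)$ classifying the surjection $H^0(M_Q,W_i)\otimes\mathcal{O} \twoheadrightarrow W_i$, so that $f=(f_1,\dots,f_\rho)\colon M_Q\to Y$ has $f^*Q_i = W_i$ and $f^*S_i$ the kernel of evaluation. The displayed identity also gives, for each arrow $a$, a canonical inclusion of the corresponding summand $\iota_a\colon H^0(M_Q,W_{s(a)}) \hookrightarrow H^0(M_Q,W_{t(a)})$.

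Next I would write down the tautological section. For an arrow $a$ with $s(a)\neq 0$, form the composite
\[
S_{s(a)} \hookrightarrow H^0(M_Q,W_{s(a)})\otimes\mathcal{O} \xrightarrow{\ \iota_a\ } H^0(M_Q,W_{t(a)})\otimes\mathcal{O} \twoheadrightarrow Q_{t(a)},
\]
which is a section of $\Hom(S_{s(a)},Q_{t(a)}) = S_{s(a)}^*\otimes Q_{t(a)}$; summing over all such $a$ gives a section $s$ of $E$. Its vanishing at a point says exactly that $\iota_a$ carries the subspace $S_{s(a)}$ into $S_{t(a)}$. On $M_Q$ the universal maps $\phi_a$ realise precisely these containments, so $f^*s=0$ and $f$ factors through $Z:=Z(s)$. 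To identify $Z$ with $M_Q$ I would compare functors of points: a $T$-point of $Y$ is a tuple of rank $r_i$ quotients $H^0(M_Q,W_i)\otimes\mathcal{O}_T \twoheadrightarrow \mathcal{Q}_i$, and it lies in $Z$ exactly when each $\iota_a$ descends to a map $\bar\phi_a\colon \mathcal{Q}_{s(a)} \to \mathcal{Q}_{t(a)}$; together with the summands indexed by arrows out of the source, this is precisely a $T$-family of representations of $Q$ with dimension vector $\br$. The pleasant point is that $\theta$-stability is automatic: at each vertex $j$ the universal surjection $\bigoplus_{t(a)=j} H^0(M_Q,W_{s(a)})\otimes\mathcal{O}_T \twoheadrightarrow \mathcal{Q}_j$ factors through $\bigoplus_{t(a)=j}\mathcal{Q}_{s(a)}$, forcing the incoming maps to be jointly surjective, which is Craw's criterion for $\theta$-stability. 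Hence $Z$ represents the same functor as $M_Q$ and $f$ is an isomorphism onto $Z$.

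The step I expect to be the main obstacle is upgrading this from a bijection on closed points to a scheme-theoretic (indeed family-level) identification: I must ensure that no stability or surjectivity is lost anywhere on $Z$, so that $Z$ acquires no extra or embedded components and its reduced moduli structure coincides with that of $M_Q$. The functorial argument is designed to settle exactly this, the crucial input being that the joint surjectivity at each vertex holds \emph{identically} on $Z$ rather than only generically. As an independent consistency check I would verify the dimension count $\dim Y - \rk E = \dim M_Q$, using the recursions $\tilde{s}_j = \sum_{t(a)=j}\tilde{s}_{s(a)}$ for the section dimensions and $s_j = \sum_{t(a)=j} r_{s(a)}$ for the fibre Grassmannian dimensions, confirming that $s$ cuts $M_Q$ out in the expected codimension.
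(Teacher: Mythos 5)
This paper does not actually contain a proof of Proposition \ref{prop:zl}: it is imported verbatim from \cite{kalashnikov}, so there is no in-paper argument to compare yours against; your proposal can only be judged on its own merits, and on those it is correct, being the natural moduli-theoretic argument (and in the same spirit as the one in the cited reference). The three load-bearing steps all check out. First, the recursion $H^0(M_Q,W_j)=\bigoplus_{a\colon t(a)=j}H^0(M_Q,W_{s(a)})$ does follow from Craw's tower: the pushforward of the tautological quotient along the Grassmannian bundle $\Gr(\mathcal{F}_j,r_j)\to M'$, where $\mathcal{F}_j=\bigoplus_{a\colon t(a)=j}W_{s(a)}$, is $\mathcal{F}_j$ itself, and bundles pulled back from the base have unchanged sections. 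Second, vanishing of your section at a $T$-point of $Y$ is exactly the condition that each $\iota_a$ descends to $\bar\phi_a\colon\mathcal{Q}_{s(a)}\to\mathcal{Q}_{t(a)}$, and since $q_j\circ\iota_a=\bar\phi_a\circ q_{s(a)}$ with $q_j$ surjective, the incoming maps at every vertex are jointly surjective identically on the zero locus; joint surjectivity at every vertex is (by induction along the ordering of the acyclic quiver) equivalent to the representation being generated at vertex $0$, which is the Craw/King characterisation of $\theta$-stability here, so stability is indeed automatic, as you claim. Third, your functor-of-points comparison is what settles the scheme-theoretic identification you correctly flag as the main obstacle: the zero scheme $Z(s)$ represents the subfunctor $\{g\colon T\to Y : g^*s=0\}$, which you have identified with the functor of families of $\theta$-stable representations with $\mathcal{W}_0=\mathcal{O}_T$, i.e.\ the functor $M_Q$ finely represents; the two constructions are mutually inverse because $\bar\phi_a\circ u_{s(a)}=u_{t(a)}\circ\iota_a=\phi_a\circ u_{s(a)}$ with $u_{s(a)}$ surjective. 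Your closing dimension check is also consistent with the paper's notation, since
\[
\dim Y-\rk E=\sum_j r_j\Bigl(\tilde s_j-r_j-\sum_{a\colon t(a)=j,\,s(a)\neq 0}(\tilde s_{s(a)}-r_{s(a)})\Bigr)=\sum_j r_j(s_j-r_j)=\dim M_Q,
\]
using $\tilde s_j=\sum_{t(a)=j}\tilde s_{s(a)}$ and $s_j=\sum_{t(a)=j}r_{s(a)}$.
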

\begin{rem}We avoid the usual notation of $Q$ for the tautological quotient bundle on the Grassmannian as the same notation is already used for the quiver. \end{rem}

Craw also shows that the anti-canonical line bundle of the quiver flag variety $M_\theta(Q,\br)$ is
\[\bigotimes_{i=1}^\rho \det W_i^{\otimes (s_i-s_i')},\]
where
\begin{equation}\label{eq:si}
s_i:=\sum_{a \in Q_1, t(a)=i} r_{s(a)}, \hspace{5mm} s_i':=\sum_{a \in Q_1, s(a)=i} r_{t(a)}.
\end{equation}
\subsection{GIT data for a toric variety} \label{subsection:GIT}
We briefly review the GIT data construction of a toric variety or toric stack. For more details, see \cite[\S 4]{crepantconjecture}. 

The \emph{GIT data} for a toric variety is an $\rho$-dimensional torus $K$ with cocharacter lattice $\LL=\Hom(\CC^*, K)$, and $m$ characters $D_1, \dots, D_m \in \LL^\vee$, together with a stability condition $w \in \LL^\vee \otimes \RR$. We usually assume that the $D_i$ generate the lattice $\LL^\vee$. The characters $D_i$ define an action of $K$ on $\CC^m$, and the toric variety (or stack) associated to this data is the GIT quotient
\[X:=\CC^m/\!/_w K.\]
The \emph{anti-cones} of the GIT data are the sets
\[A_w:=\{ S \subset \{1,\dots,m\} \mid w \in \text{Cone}(D_i: i \in S).\}\]
Here, by cone, we mean the \emph{open} cone generated by the $D_i$: the set of elements that are sums $\sum_{i \in S} a_i D_i,$ where $a_i>0$.  We assume that $\{1,\dots,m\} \in A_w.$

The semi-stable locus $U_w$ of $K$ can be described explicitly as
\[
\Big\{ (z_1,\dots,z_m) \in \CC^m \; \Big|\;  z_i \neq 0 \text{ for } i \in S \text{ and } S \in A_w \Big\},
\]
that is, its elements can have zeroes at $z_i, i \in I$, only if $w$ is in the open cone generated by $D_i$, $i \not \in I$. 

To see fully how the GIT data relates to the fan construction of a toric variety, see \cite{crepantconjecture}. There is an important exact sequence:
$$0 \to M \to \ZZ^{m} \xrightarrow{\pi} \LL^\vee \to 0$$
where $\pi$ is defined by taking the basis element $e_i$ to $D_i$.
 If
\[\bigcap_{S \in A_w}S= \emptyset,\]
then the co-character lattice $\LL^\vee$  can be identified with the class group lattice of the toric variety $Cl(X)$ (otherwise one needs to quotient by the vector space generated by the $D_i, i \in \bigcap_{S \in A_w} S$). 

\section{SAGBI basis degenerations of quiver flag varieties}\label{sec:SAGBI}
In this section, we describe a quiver flag variety using subspaces rather than quotients, leading to a description of the quiver flag variety as a subvariety of a product of projective spaces with a Pl\"ucker relation-type description of the ideal cutting out the quiver flag variety. We give a SAGBI basis for the coordinate ring of this subvariety in the case of a $Y$-shaped quiver flag variety, which gives rise to a toric degeneration. 
\begin{mydef} Let $X$ be a smooth variety. A flat family $\pi: \mathfrak{X} \to \CC$ is a \emph{toric degeneration} of $X$ if the generic fiber is $X$ and the special fiber $X_0$ of $\pi$ is a toric variety. We also require that the family is $\QQ$-Gorenstein.  
\end{mydef}

We first recall the construction for flag varieties. 
\subsection{A degeneration of a flag variety}
We follow Miller and Sturmfels in \cite[Chapter 14]{combinatorial} to present the toric degeneration of the flag variety from \cite{GL}.  When building flag varieties as quiver flag varieties, they arise naturally as flag varieties of \emph{quotients}.  By $\Fl(n;r_1,\dots,r_\rho)$, we denote the (partial) flag variety of quotients of $\CC^n$ of ranks $r_1>r_2>\cdots>r_\rho$ . Points of the flag variety are tuples of surjective maps
\[\CC^n \to H_1 \to H_2 \to \cdots \to H_\rho.\]
Equivalently, however, we can consider the flag variety of subspaces. If we let $V_i$ be the subspace fitting into the exact sequence
\[0 \to V_i \to \CC^n \to H_i \to 0,\]
we can represent a point on the flag variety as a flag
\[V_1 \subset V_2 \subset \cdots \subset V_\rho \subset \CC^n,\]
where $V_i$ has rank $k_i:=n_i-r_i$

We can represent such a flag as a tuple of matrices $A_1,\dots, A_\rho$, where $A_i$ is a $k_i \times (n-k_i)$ matrix, such that the top $k_{i-1}$ rows of $A_i$ is $A_{i+1}$. That is,
 \[A_i:=\begin{bmatrix}
x_{11} & \cdots & x_{1n} \\
\vdots &  & \vdots \\
x_{k_i 1} & \cdots & x_{k_i n} \\
\end{bmatrix}. \]
We can consider the set of $k_i \times k_i$ minors of each $A_i$. Although there are many representations of a flag as a tuple of matrices, the flag is exactly determined by these minors, up to re-scaling each set. In this way, we can view $\Fl(n;r_1,\dots,r_\rho)$ as a subvariety of 
\[\prod_{i=1}^\rho \PP^{\binom{n}{k_i}-1}.\]
It is cut out by the relations between the minors of the $A_i$. The minors are called Pl\"ucker coordinates and the relations are Pl\"ucker relations. 

\begin{rem}
 Let $S_i$ be the vector bundle that fits into the exact sequence
\[0 \to S_i \to \mathcal{O}_{\Fl}^{\oplus n} \to W_i \to 0,\]
where $W_i$ is the tautological quotient bundle on $\Fl(n;r_1,\dots,r_\rho)$. This is the usual tautological sub-bundle on the flag variety. The rank of $S_i$ is $k_i$. There are injective maps
\[0 \to S_1 \to S_2 \to \cdots \to S_\rho.\]
The Cox ring of the flag variety is generated by the sections of each $\det(S_i)^*$, which are generated by the minors of $A_i$. The above description of the flag variety as a subvariety of projective space is exactly the one given by using these line bundles to embed  $\Fl(n;r_1,\dots,r_\rho)$ in $\prod_{i=1}^\rho \PP^{\binom{n}{k_i}-1}.$
\end{rem}

Consider the subalgebra $A \subset \CC[x_{ij}: 1 \leq i \leq r_1, 1 \leq j \leq n]$ generated by these minors. In \cite[\S 14.3]{combinatorial}, they show that the basis given by the minors is a SAGBI basis for $A$ under the monomial order given by the lex ordering on the $x_{ij}$. That is, for any $f$ in the algebra, the initial term of $f$ is a monomial in the initial terms of the basis.   A SAGBI basis defines a flat degeneration of the flag variety in $\prod_{i=1}^\rho \mathbb{P}^{{n \choose k_i}-1}$  to the toric subvariety defined by the monomials which are the initial terms of the basis elements. 

\begin{eg} Consider the flag variety $\Gr(4,2)$. Then the matrix above is
 \[\begin{bmatrix}
x_{11} & x_{12} & x_{13} & x_{14} \\
x_{21} & x_{22} & x_{23} & x_{24}\\
\end{bmatrix}. \]
The algebra is generated by the six $2 \times 2$ minors of this matrix. Their initial terms are
 \[x_{11} x_{22}, x_{11} x_{23},x_{11} x_{24},x_{12} x_{23}, x_{12} x_{24},x_{13} x_{24}. \]
The toric degeneration of $\Gr(4,2)$ is the closure of the image of the map $(\CC^*)^8 \to \mathbb{P}^5$ defined by these monomials. 
\end{eg}
\subsection{Pl\"ucker relations for quiver flag varieties}\label{sec:coords}
The first step towards generalizing this construction to quiver flag varieties is to find appropriate coordinates: that is, we want an analogue of the Pl\"ucker algebra and Pl\"ucker relations for quiver flag varieties. One option would be to consider the Cox ring of the quiver flag variety.  However, generators of the Cox ring of a quiver flag variety are not known in general, so in practice this isn't helpful. Instead, we'll effectively use only a sub-algebra of the Cox ring to find the toric degeneration. The key step is to re-write the universal properties defining the quiver flag variety as incidence conditions. 

Let $M(Q,\br)$ be a quiver flag variety. Use Proposition \ref{prop:zl} to write $M(Q,\br)$ as a subvariety of
 \[P:=\prod_{i=1}^\rho \Gr(\tilde{s}_i,r_i),\]
  \[\tilde{s}_i=\dim H^0(W_i).  \]
    \begin{rem} Craw defines $\tilde{s}_i$ to be the number of paths from $0$ to $i$ in the quiver.  He shows that this agrees with the above definition. 
  \end{rem}
  
The quiver flag variety is cut out by a section of a vector bundle.  We can use the Pl\"ucker embedding of the Grassmannians to view $M(Q,\br)$ as a subvariety of projective space. However, as the section defining the quiver flag variety is not a section of a product of line bundles, it is not easy to find equations defining the quiver flag variety in the projective space. To find such equations, we'll use a slightly different embedding. 

 The stability conditions of a quiver flag variety involve the surjectivity of maps. We want to express them in terms of inclusions, just as we did for the flag variety. Quiver flag varieties do not come in dual pairs, unlike flag varieties and Grassmannians, however, we can dualize the Grassmannian factors of $P$ and then view $M(Q,\br)$ as a subvariety of the result. That is, the Grassmannian $\Gr(n,k)$ is canonically isomorphic to $Gr^{sub}(n-k,n)$, the Grassmannian of subspaces. We can exploit this to view $M(Q,\br)$ as a subvariety of 
\[P^\vee:=\prod_{i=1}^\rho \Gr^{sub}(k_i, \tilde{s}_i),\] 
where 
\[k_i:=\tilde{s}_i-r_i.\]
\begin{rem}We can represent points in the Grassmannian of subvarieties $\Gr^{sub}(n-k,n)$ as full rank $(n-k) \times n$ matrices, where the \emph{span} of the rows give the subspace. These are not quite the coordinates we get from the GIT construction: if we view this as a GIT quotient, we will obtain the Grassmannian $\Gr(n,n-k)$, which is also canonically isomorphic to $\Gr(n,k)$. 
\end{rem}
We'll use the embedding given by the composition
\begin{equation}\label{eq:comp} M(Q,\br) \to P \to P^\vee \to \prod_{i=1}^\rho \PP^{\binom{\tilde{s}_i}{k_i}}.\end{equation}
This is the one given by the line bundles $\det S_i^*.$ In order to find the equations defining $M(Q,\br)$ in the product of projective spaces, however, we'll need to describe it more explicitly.

\begin{lem} Recall that $n_{0i}$ is the number of arrows from $0$ to $i$. A point in $P^\vee$ can be represented as $\rho$ vector subspaces $(V_i \subset \CC^{\tilde{s}_i})_{i=1}^\rho$.  The subvariety $M(Q,\br)$ of $P^\vee$ is characterized the condition that
 \[\bigoplus_{a \in Q_1, s(a) \neq 0, t(a)=i} V_{s(a)} \oplus \CC^{n_{0,i}} \subset V_i\]
 for all $i$.
\end{lem} 
\begin{proof}
Consider $M(Q,\br)$ as a subvariety of $P$. Represent a point $P$ as $\rho$ quotient vector spaces $(\CC^{\tilde{s}_i} \to H_i)_{i=1}^\rho$. 
Note that
\[\CC^{\tilde{s}_i}  =  \bigoplus_{\substack{a \in Q_1, \\ t(a)=i, \\ s(a)>0}} \CC^{\tilde{s}_{s(a)}} \oplus \CC^{n_{0,i}}.\]
By \cite[Proposition 3.1]{kalashnikov}, the point $(H_i)$ is a point in the quiver flag variety exactly when the surjective map
\[ \CC^{\tilde{s}_i} \to H_i\]
factors through
\[\CC^{\tilde{s}_i} \to \bigoplus_{\substack{a \in Q_1, \\ t(a)=i, \\ s(a)>0}} H_{s(a)} \oplus \CC^{n_{0,i}} \to H_{i}.\]
The duality between $P$ and $P^\vee$ takes
\[ V_i \mapsto H_i=\CC^{\tilde{s}_i}/V_i.\]
Therefore when $(V_i)$ represents a point in the quiver flag variety, there is a surjective map
\[\bigoplus_{\substack{a \in Q_1, \\ t(a)=i, \\ s(a)>0}} \CC^{\tilde{s}_{s(a)}}/V_{s(a)} \oplus \CC^{n_{0,i}} \to \CC^{\tilde{s}_i}/V_i.\]
Therefore
 \[\bigoplus_{\substack{a \in Q_1, \\ t(a)=i, \\ s(a)>0}}V_{s(a)} \oplus \CC^{n_{0,i}} \subset V_i\]
 as required. 
\end{proof}
\begin{cor} The quiver flag variety is the fine moduli space for tuples of vector spaces $(V_1,\dots,V_\rho)$ satisfying the incidence conditions
 \[\bigoplus_{\substack{a \in Q_1, \\ t(a)=i, \\ s(a)>0}}V_{s(a)} \oplus \CC^{n_{0,i}} \subset V_i\]
for all $i$.
\end{cor}
\begin{proof} As can be seen from the proof of the lemma, the incidence conditions are simply a reformulation of the conditions defining the quiver flag variety as a fine moduli space using surjective maps. 
\end{proof}

A point in $P^\vee$ can be represented as a tuple of $\rho$ matrices, where the $i^{th}$ matrix is of size $k_i \times \tilde{s}_i$. The row span of each matrix gives the subspace corresponding to the $i^{th}$ factor in $P^\vee$. When a point in $P^\vee$ lies in the quiver flag variety, we can represent it again as a tuple of $\rho$ matrices, $(A_i)_{i=1}^\rho$, where $A_i$ is again of size $k_i \times \tilde{s}_i$, but (like in the flag case) certain submatrices of the $A_i$ are filled in by $A_j, j<i.$

To describe the submatrices, we need to index the columns and rows of the $A_i$ using the quiver. Fix $i \in Q_0$. Each $a \in Q_1, t(a)=i$ corresponds to $r_{s(a)}$ columns; this gives a partition of the columns of $A_i$. We can also partition the rows where there is a subset for each $a \in Q_1, t(a)=i, s(a) \neq 0$ of size $\tilde{s}_{s(a)}-r_{s(a)}$, and one remaining subset of size $s_i-r_i$ as
 \[\tilde{s}_i-\sum_{a \in Q_1, t(a)=i, s(a) \neq 0}(\tilde{s}_{s(a)}-r_{s(a)})-r_i=s_i-r_i. \] 
 See Equation \eqref{eq:si} for the definition of $s_i.$

Using the lemma, we see that we can represent a point of the quiver flag variety by a tuple of matrices $(A_i)_{i=1}^\rho$ such that for $A_i$: the submatrix of $A_i$ corresponding to the rows given for $a \in Q_1, t(a)=i, s(a) \neq 0$ is $0$ except for the sub-submatrix of corresponding to the columns determined by $a$: this sub-submatrix is $A_{s(a)}$.

\begin{eg}\label{eg:coords} Consider the quiver flag variety $M_Q$ given by 
\begin{center}
  \includegraphics[scale=0.5]{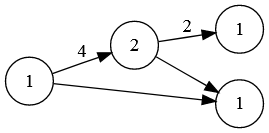}
\end{center}
This quiver flag variety can be seen as a subvariety of $P^\vee=\Gr^{sub}(2,4) \times \Gr^{sub}(7,8) \times \Gr^{sub}(4,5).$ A point of this space given by a triple $(V_1 \subset \CC^4,V_2 \subset \CC^8, V_3 \subset \CC^5)$, of dimension $2,7,$ and $4$ respectively, is in the subvariety $M(Q,\br)$ if $\{0\}\oplus V_1  \subset V_3, V_1 \oplus V_1 \subset V_2$. 
The matrices $A_1, A_2,$ and $A_3$ in this example are
 \[\begin{bmatrix}
x_{11} & x_{12} & x_{13} & x_{14} \\
x_{21} & x_{22} & x_{23} & x_{24}\\
\end{bmatrix},\]
 \[\begin{bmatrix}
z_{11} & z_{12} & z_{13} & z_{14} & z_{15} & z_{16} & z_{17} & z_{18} \\
z_{21} & z_{22} & z_{23} & z_{24} & z_{25} & z_{26} & z_{27} & z_{28} \\
z_{31} & z_{32} & z_{33} & z_{34} & z_{35} & z_{36} & z_{37} & z_{38}\\
x_{11} & x_{12} & x_{13} & x_{14} & 0 & 0 & 0 &0 \\
x_{21} & x_{22} & x_{23} & x_{24} & 0 & 0 & 0 &0\\
0 & 0 & 0 & 0 &x_{11} & x_{12} & x_{13} & x_{14} \\
0 & 0 & 0 & 0 & x_{21} & x_{22} & x_{23} & x_{24} \\

\end{bmatrix}, \]
and
 \[\begin{bmatrix}
y_{11} & y_{12} & y_{13} & y_{14} & y_{15} \\
y_{21} & y_{22} & y_{23} & y_{24} & y_{25} \\
0 & x_{11} & x_{12} & x_{13} & x_{14} \\
0 & x_{21} & x_{22} & x_{23} & x_{24} \\

\end{bmatrix} \]

\end{eg}

\begin{mydef}\label{def:plalg} The \emph{quiver Pl\"ucker algebra} is the algebra $\Pl(Q,\br)$ over $\CC$ generated by all the $k_i \times k_i$ minors of $A_i$, $i=1,\dots,\rho$. 
\end{mydef}

\begin{eg} Continuing Example \ref{eg:coords}, the quiver Pl\"ucker algebra $\Pl(Q,\br)$ is the sub-algebra of $\CC[x_{ij},y_{kl},z_{mn}]$ generated by the size 2 minors of $A_1$, the size $7$ minors of $A_2$, and the size $4$ minors of $A_3.$
\end{eg}

\begin{mydef}\label{def:plrel} Let $\CC[p]:=\CC[p^i_{J}: i=1,\dots,\rho, J \subset \{1,\dots,\tilde{s}_i\}]$, which is naturally identified with the coordinate ring of $\prod_{i=1}^\rho \PP^{\binom{\tilde{s}_i}{k_i}-1}$. We define $\phi$ to be the map
\[\phi: \CC[p] \to \Pl(Q,\br), \hspace{3mm} p^i_J \mapsto m_J(A_i),\]
where $m_J(A_i)$ is the minor of $A_i$ given by taking the columns indexed by $J$. If a polynomial $p$ is an element of $\ker(\phi)$, we say it is a \emph{quiver Pl\"ucker relation}. 
\end{mydef}

\begin{thm} The image of the quiver flag variety $M_\theta(Q,\br)$ under the embedding \eqref{eq:comp} is the subvariety cut out of $\prod_{i=1}^\rho \PP^{\binom{\tilde{s}_i}{k_i}-1}$ by the kernel of $\phi$. 
\end{thm}
\begin{proof}
A subspace represented by matrix $k_i \times \tilde{s}_i$ is precisely characterized (up to a scalar multiple) by all the $k_i \times k_i$ minors of the matrix. Using these minors, we can thus represent a point in $P^\vee$ precisely as a point in $\prod_{i=1}^\rho \PP^{\binom{\tilde{s}_i}{k_i}-1}$. If the point was a point in the image of the quiver flag variety, then we can represent it as a tuple of matrices of the form of the $A_i$. It follows that $\ker(\phi)$ cuts out the closure of the image of the injective map taking a tuple $(V_i)_{i=1}^\rho$ to a point in $\prod_{i=1}^\rho \PP^{\binom{\tilde{s}_i}{k_i}-1}$. However, this map is precisely the composition from \eqref{eq:comp}, so the image is closed. 
\end{proof}

\subsection{Toric degenerations of $Y$-shaped quivers}
There is a class of quivers for which we show minors of the $A_i$ are a SAGBI basis of the quiver Pl\"ucker algebra (it then follows easily for products of such such quiver flag varieties). We call these \emph{$Y$-shaped} quivers, and they are characterized as follows. 
\begin{mydef} Let $Q=(Q_0,Q_1)$ be a quiver, and $\{0,\dots,\rho\}$ a labelling of the vertices such that $n_{ij}=0$ if $i \geq j$. The quiver $Q$ is \emph{$Y$-shaped} if:
\begin{itemize}
\item Vertex $1$ has at most $2$ out-going arrows. 
\item For any vertex $i>1$, there is at most one arrow with $s(a)=i$.
\item  For any vertex $i>0$, there is at most one arrow such that $t(a)=i$ and $s(a) \neq 0.$ 
 \end{itemize}
\end{mydef}
 We can assume that for all $j>1$, there is a path $1 \to j$, as otherwise the associated quiver flag variety is a product of two $Y$-shaped quiver flag varieties.  The quiver in Example \ref{eg:coords} is not a $Y$-shaped quiver because of the double arrow. 
\begin{eg}\label{eg:Yshaped1}The following is an example of a $Y$-shaped quiver:
\begin{center}
  \includegraphics[scale=0.5]{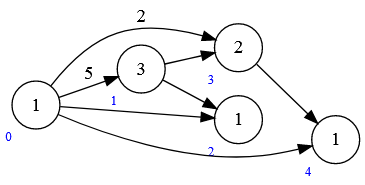}
\end{center}
\end{eg}

By definition, there are at most $2$ arrows out of vertex $1$ in a $Y$-shaped quiver. If there are two, call them $a_1$ and $a_2$, and define a partition $S_1' \sqcup S_2' =\{2,\dots,\rho\}$ by
$i \in S_j'$ if the path from $1 \to i$ contains $a_j$. We write $S_i:=S_i' \cup \{1\}.$ Essentially, we just subdivide the two branches of the quiver. Denote the last vertex of the $S_i$ branch $\rho_i$. 

The ample cone of a quiver flag variety is a cone in $\RR^\rho$, where the $i^{th}$ generator is associated to the line bundle $\det(W_i)$. See \cite{kalashnikov} for more details. 
\begin{lem} The ample cone of a $Y$-shaped quiver is the positive orthant. 
\end{lem}
\begin{proof}
Let $(c_1,\dots,c_\rho)$ be point in the ample cone of a $Y$-shaped quiver flag variety. Suppose $i$ is a vertex in the quiver. If $r_i>1,$ then by \cite[Corollary 5.12]{kalashnikov}, $c_i>0$. By Proposition 5.11 in \cite{kalashnikov}, if $r_i=1$, then $c_i>0$ if for every vertex $j>0$ in the quiver there is a path $0 \to j$ not passing through $i$. If there are no outgoing arrows from $i$, this is automatically satisfied. Suppose there is an outgoing arrow to vertex $k$. Then $\tilde{s}_k=r_i+n_{0k}=1+n_{0k}$ and by assumption $\tilde{s}_k>r_k \geq 1.$ So $n_{0,k}>0.$ So there is a path from $0$ to $k$ not passing through $i$. Now let $j$ be any vertex. There's always some path $0 \to j$. If this path does not go through $i$ we are done, otherwise, it contains an arrow $i \to k$ for some vertex $k$. So this path can be re-routed to avoid $i$ by taking an arrow $0 \to k$. So $c_i>0$. \end{proof}

Let $Q$ be a $Y$-shaped quiver. Let $(A_i)_{i=1}^\rho$ be the $\rho$ matrices with coordinate entries defined by the previous subsection for a $Y$-shaped quiver $Q$. Recall that $A_i$ is a $k_i \times \tilde{s}_i$ matrix, where $k_i:=\tilde{s}_i-r_i.$ For each $i$, there are $(s_i-r_i)(\tilde{s}_i)$ new variables appearing in $A_i$ as the entries of $s_i-r_i$ rows. We use the partition $S_1' \sqcup S_2'$ to define an order on these coordinates: variables introduced in $A_i$ take priority over variables introduced in $A_1$ which take priority over variables introduced in $A_j$, for $i \in S_1'$ and $j \in S_2'$. If $i,j \in S_1, j>i$, variables in $A_i$ take priority over variables in $A_j$. The reverse is true for $i,j \in S_2.$ Otherwise, the order on variables follows the lexicographical order on matrix entries. That is, denote the new variables in $A_i$ as $x^{(i)}_{j,k}, 1 \leq j \leq s_i-r_i, 1 \leq k \leq \tilde{s}_i.$  For a given $i$, 
 \[x^{(i)}_{11}>x^{(i)}_{12} > \cdots >x^{(i)}_{(s_i-r_i)\tilde{s}_i}. \]
 
Our goal is now to show the following theorem. 
\begin{thm}\label{thm:SAGBI}
The $k_i \times k_i$ minors of the $A_i$ for all $i$ form a SAGBI basis under the above defined order for the quiver Pl\"ucker algebra. 
\end{thm}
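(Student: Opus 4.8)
# Proof Proposal for Theorem (SAGBI basis for $Y$-shaped quivers)

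\textbf{Overall strategy.} The plan is to reduce the $Y$-shaped quiver case to the flag variety case treated by Miller--Sturmfels \cite{combinatorial}, exploiting the recursive structure of $Y$-shaped quivers and the carefully chosen variable order. Recall that a SAGBI basis condition says that the initial algebra generated by the initial terms of the minors equals the initial terms of the whole algebra; equivalently, every \emph{straightening relation} (subduction) among products of minors terminates, so that the initial monomials of the minors generate the initial algebra. The essential point is that for each branch of the $Y$, the matrices $A_i$ look, after restriction, like the flag variety matrices whose minors are known to form a SAGBI basis. The task is to show that the interactions \emph{between} branches and the \emph{shared} variables coming from vertex $1$ do not destroy the SAGBI property.

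\textbf{Key steps.} First I would recall precisely how the proof goes in the flag case: one shows the initial terms of the top-aligned minors are the ``diagonal'' monomials, and that products of minors subduct to standard monomials indexed by semistandard tableaux (the straightening law), with the lex order guaranteeing termination. Second, I would analyze the structure of each matrix $A_i$ for a $Y$-shaped quiver. By the description in \S\ref{sec:coords}, the nonzero entries of $A_i$ fall into blocks: a top block of genuinely new variables $x^{(i)}_{jk}$, and lower blocks that are \emph{copies} of the submatrices $A_{s(a)}$ coming from the unique incoming arrow with nonzero source. Because the quiver is $Y$-shaped, each non-source vertex has at most one incoming arrow with nonzero source, so this nesting is strictly linear along each branch --- this is exactly what makes each branch behave like a flag variety tower. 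Third, I would verify that under the prescribed order --- variables in $A_i$, $i\in S_1'$, beating those in $A_1$, beating those in $A_j$, $j\in S_2'$, with the within-branch refinement --- the initial term of each minor of $A_i$ is again the product of its main-diagonal entries, so that the initial terms are squarefree monomials of the expected combinatorial type. Fourth, and most importantly, I would establish the subduction/straightening argument globally: given any polynomial in the subalgebra, I would show its initial term lies in the monomial algebra generated by the initial terms of the minors, by induction on the branches. The linear nesting along each branch lets one apply the flag-variety straightening law one vertex at a time; the key compatibility is that the variable order was designed so that variables shared between a parent and child matrix are ranked consistently along the branch (decreasing toward $\rho_i$ in $S_1$, increasing in $S_2$), so the initial-term computation for a child minor never conflicts with that of its parent.

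\textbf{Main obstacle.} The hard part will be controlling the shared variables of vertex $1$, which appear simultaneously in the matrices of \emph{both} branches $S_1$ and $S_2$. Because a minor of $A_{\rho_1}$ (far end of branch one) and a minor of $A_{\rho_2}$ (far end of branch two) may both involve the vertex-$1$ variables $x^{(1)}_{jk}$, a product of such minors could in principle have an initial term that is not obviously a product of initial terms of the individual minors. The asymmetric priority convention in the statement --- giving $S_1'$-variables highest priority and $S_2'$-variables lowest, with vertex $1$ sandwiched in between --- is precisely engineered to break this tie, and the crux of the argument is to check that this ordering forces the cross-branch straightening to terminate. Concretely, I would argue that in any potential obstruction, the higher-priority $S_1$ contribution determines the leading monomial first, reducing the remaining comparison to a single-branch problem already handled by \cite{combinatorial}. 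I expect this tie-breaking verification, rather than the within-branch flag-variety reduction, to be where the real content lies; the remaining estimates (that initial terms are diagonal, that the monomial order is multiplicative, that subduction terminates) I would treat as routine once the branch structure and ordering are set up.
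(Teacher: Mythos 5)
Your overall framework matches the paper's: both follow Miller--Sturmfels, both take the initial term of a minor to be its diagonal monomial, and both aim to show that products of minors straighten to semi-standard monomials (the paper makes this precise via a presentation ring $P_Q$ of Pl\"ucker-type variables $p^1_\sigma, p^2_\tau$, a map $\phi$ to the polynomial ring, and semi-standard skew Young tableaux). You also correctly locate the crux: the interaction between the two branches through the shared vertex-$1$ variables.

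However, your proposed resolution of that crux is where the gap lies, and it is a genuine one. You claim that ``the higher-priority $S_1$ contribution determines the leading monomial first, reducing the remaining comparison to a single-branch problem already handled by \cite{combinatorial}.'' This cannot work as stated: the obstruction is not an ordering or tie-breaking issue. What must be shown is that for every \emph{incomparable} pair consisting of one Pl\"ucker variable from each branch, say $p^1_\sigma$ and $p^2_\tau$, the product $p^1_\sigma p^2_\tau$ is the initial term of an element of $\ker(\phi)$ --- i.e.\ one must exhibit an actual algebraic relation among the minors that straightens this cross-branch product. Single-branch straightening relations only involve Pl\"ucker variables from one branch, so no reduction to the flag case can produce such a relation; it has to be constructed anew. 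The paper does this explicitly: it forms the exchange sum $f=\sum_\pi \mathrm{sign}(\pi)\, p^1_{\pi(\sigma)} p^2_{\pi(\tau)}$ over permutations of a combined column sequence of length $p=|\sigma|+|\tau|-b+1$ (where $b=\tilde{s}_1-r_1$ accounts for the shared vertex-$1$ block), and proves $\phi(f)=0$ by realizing $f$ as a multilinear alternating function on $p$ columns of a single large matrix with only $p-1$ rows, built by overlaying $M_1$ as a top-right submatrix and $M_2$ as a bottom-left submatrix --- an overlay that is consistent precisely because both branch matrices contain the same vertex-$1$ block. The role of the asymmetric variable order is then only to verify that $p^2_{\pi(\tau)} \prec p^2_\tau \prec p^1_\sigma \prec p^1_{\pi(\sigma)}$, so that $p^1_\sigma p^2_\tau$ really is the initial term of $f$. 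This vanishing-by-overlap construction is the genuinely new content of the theorem beyond \cite{combinatorial}, and it is exactly the step your proposal treats as a routine verification; without it, the subduction argument for cross-branch products does not get off the ground.
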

The argument is very similar to the proof of \cite[Theorem 14.11]{combinatorial}, which is the same statement in the case of flag varieties. We first find a Gr\"obner basis for the ideal of Pl\"ucker relations, i.e. $\ker(\phi)$. 

We can write all the $A_i, i \in S_1$ ($i \in S_2$) as sub-matrices in one matrix $M_1$  ($M_2$) such that the bottom-aligned (top-aligned) minors of sizes $k_i$  of $M_1$ ($M_2$) are the maximal minors of all the $A_i$. Up to a permutation of the rows and columns, $M_j$ is equivalent to $A_{\rho_j}$. Recall the construction of the $A_i$: the set of columns is partitioned so that each subset corresponds to  $a \in Q_1, t(a)=i$. The set of rows is partitioned into sets correspond to $a \in Q_1, t(a)=i, s(a)\neq 0$ and one more set $U$ of size $s_i-r_i$. For $j=1$, and $i \in S_j$, we order the rows so that rows from $U$ come first, and the rest are ordered from greatest to smallest $s(a)$. Columns are ordered similarly. For $j=2$, we use the reverse formation.  Then $M_j=A_{\rho_j}$ with this ordering.  

The matrix $M_j$ is a $k_{\rho_j} \times \tilde{s}_{\rho_j}$ matrix. Recall that $k_{\rho_j}=\tilde{s}_{\rho_j}-r_{\rho_j}.$

\begin{eg} Let $Q$ be the quiver from Example \ref{eg:Yshaped1}. Let $S_1'=\{2\}$ and $S_2'=\{3,4\}$. Then 
\[M_1=\begin{bmatrix}
x^{(2)}_{11} & x^{(2)}_{12} &x^{(2)}_{13} &x^{(2)}_{14} &x^{(2)}_{15} &x^{(2)}_{16}\\
x^{(2)}_{21} & x^{(2)}_{22} &x^{(2)}_{23} &x^{(2)}_{24} &x^{(2)}_{25} &x^{(2)}_{26}\\
x^{(2)}_{31} & x^{(2)}_{32} &x^{(2)}_{33} &x^{(2)}_{34} &x^{(2)}_{35} &x^{(2)}_{36}\\
0 & x^{(1)}_{11} &x^{(1)}_{12} &x^{(1)}_{13} &x^{(1)}_{14} &x^{(1)}_{15}  \\
0&x^{(1)}_{21} &x^{(1)}_{22} &x^{(1)}_{23} &x^{(1)}_{24} &x^{(1)}_{25} \\
\end{bmatrix}.
\]
Similarly, 
\[M_2=
\begin{bmatrix}
x^{(1)}_{11} &x^{(1)}_{12} &x^{(1)}_{13} &x^{(1)}_{14} &x^{(1)}_{15} &0& 0&0\\
x^{(1)}_{21} &x^{(1)}_{22} &x^{(1)}_{23} &x^{(1)}_{24} &x^{(1)}_{25}&0& 0&0 \\
 x^{(3)}_{11} & x^{(3)}_{12} & x^{(3)}_{13} & x^{(3)}_{14}& x^{(3)}_{15}& x^{(3)}_{16}& x^{(3)}_{17} & 0 \\
 x^{(3)}_{21} & x^{(3)}_{22} & x^{(3)}_{23} & x^{(3)}_{24}& x^{(3)}_{25}& x^{(3)}_{26}& x^{(3)}_{27}&0  \\
x^{(3)}_{31} & x^{(3)}_{32} & x^{(3)}_{33} & x^{(3)}_{34}& x^{(3)}_{35}& x^{(3)}_{36}& x^{(3)}_{37}&0  \\
x^{(4)}_{11} &x^{(4)}_{12} &x^{(4)}_{13} &x^{(4)}_{14} &x^{(4)}_{15} &x^{(4)}_{16} &x^{(4)}_{17} &x^{(4)}_{18} \\
x^{(4)}_{21} &x^{(4)}_{22} &x^{(4)}_{23} &x^{(4)}_{24} &x^{(4)}_{25} &x^{(4)}_{26} &x^{(4)}_{27} &x^{(4)}_{28} \\
\end{bmatrix}
.\]
\end{eg}
We index the columns of $M_1$ from $\{1,\dots,\tilde{s}_{\rho_1}\}$, but we shift the index of $M_2$ so that the columns are numbered $\{\tilde{s}_{\rho_1}-\tilde{s}_1+1,\dots,\tilde{s}_{\rho_1}-\tilde{s}_1+\tilde{s}_{\rho_2}\}$. This choice is to ensure that $x^{1}_{ij}$ appears in the same column in both $M_1$ and $M_2$. Use $M_1$ and $M_2$ to relabel the $x^{(i)}_{jk}$ by setting the entries of $M_1$ to be $[m^{(1)}_{ij}]$ and the entries of $M_2$ to be $[m^{(2)}_{ij}]$ where $j$ runs through the column sets described above. Some of the $m_{ij}$ are zero.

The Pl\"ucker algebra is generated by the bottom--aligned minors of $M_1$ of sizes $k_i$, $i \in S_1$ and the upper--aligned minors of $M_2$ of sizes $k_i, i \in S_2$. 
\begin{rem}
From now on, by the minors of $M_1$, we mean \emph{only} the bottom-aligned minors of size $k_i, i \in S_1$, and by the minors of $M_2$, we mean \emph{only} the upper-aligned minors of size $k_i, i \in S_2.$
\end{rem}

The diagonal term of a minor is the monomial given by taking the product of the diagonal entries of the associated sub-matrix.
\begin{lem} The initial term of a non-zero minor of $M_1$ or $M_2$ is the diagonal monomial.
\end{lem}
\begin{proof} It is clear from the term order that the only way the statement of the lemma can fail to be true is if the diagonal monomial of a non-zero minor is $0$. It therefore suffices to show the following claim.
\begin{claim} If the diagonal monomial is zero for a minor of $M_i$, then the minor is zero.
\end{claim}

We'll show the claim for $M_1$. The proof for $M_2$ is similar, and is left to the reader. Consider a $k_i \times k_i$ bottom-aligned minor of $M_1$, given by columns $c_1<\cdots<c_{k_i}$,  $i \in S_1$. If the diagonal monomial vanishes, then for some $t$, $m^{(1)}_{t c_t}=0$. Note that whenever there is a zero in $M_1$, all entries in the column below that zero are also zero. So in fact $m^{(1)}_{v c_t}=0$ for all $v \geq t$. So the first $t$ columns of the minor span a subspace of dimension at most $t-1$, and thus are linearly dependent.  The minor therefore vanishes. 
\end{proof}
\begin{rem} This lemma is not true for general quiver flag varieties. See for example the minor corresponding to the first $7$ columns of $A_2$ in Example \ref{eg:coords}. Although the minor is non-zero, the diagonal monomial vanishes. This can occur when $n_{ij}>1$ for $i>0.$ This is one difficulty in generalizing the discussion below to other types of quiver flag varieties. 
\end{rem}

Consider a monomial $m$ in the $x^{(i)}_{jk}$. We want to write it as $m_1 m_2$, where $m_i$ is a monomial coming from variables in $M_i$. Since the variables $M_1$ and $M_2$  overlap at the $x^{(1)}_{ij}$, we'll put all of these into $M_2$. So $m_1$ contains variables that come from the first $a_1:=k_{\rho_1}-k_1$ rows of $M_1$. Gather the terms in $m_1$ that appear in the $c^{th}$ row, and suppose these appear in columns $i_{c1} \leq \cdots \leq i_{c l_c}$ of $M_1$. That is, we can write $m_1$ as a product of factors of the form:
\[m^{(1)}_{c i_{c1}} m^{(1)}_{c i_{c 2}}\cdots m^{(1)}_{c i_{c l_c}}\]
for some non-negative integer $l_c.$ In other words, as an ordered product, $m_1$ can be written uniquely as
\[m^{(1)}_{1i_{11}} m^{(1)}_{1i_{12}}\cdots m^{(1)}_{1i_{1l_1}} m^{(1)}_{2i_{21}} \cdots m^{(1)}_{2i_{2l_2}} \cdots m^{(1)}_{a_1 i_{a_1 1}} \cdots m^{(1)}_{a_1 i_{a_1 l_{a_1}}}.\]
Note that by assumption, $i_{s t} \leq i_{s t+1}$.

The factor $m_2$ contains variables that come from any of the $a_2:=k_{\rho_2}$ rows of $M_2$. So $m_2$ can be written uniquely as an ordered product as:
 \[m^{(2)}_{1 j_{11}} m^{(2)}_{1j_{12}}\cdots m^{(2)}_{1j_{1 t_1}} m^{(2)}_{2j_{21}} \cdots m^{(2)}_{2j_{2 t_2}} \cdots m^{(2)}_{a_2 j_{a_2 1}} \cdots m^{(2)}_{a_2  j_{a_2 t_{a_2}}}.\]
Here $j_{st} \leq j_{s t+1}$.

We are interested in the case when $m_1 m_2$ is a monomial appearing in an element of the quiver Pl\"ucker algebra. If it is, it is easy to see  that
\begin{itemize}
\item $l_1 \leq l_2 \leq \cdots \leq l_{a_1}.$
\item $t_2 \geq \cdots \geq t_{a_2}.$
\item $t_1 \geq l_{a_1}+t_{k_1+1}.$
\item Suppose rows $p$ and $p+1$ of $M_1$ both have entries from the set of variables (i.e. of the form $x^{(i)}_{jk}$ for fixed $i$). Then $l_p=l_{p+1}$. 
\item Suppose rows $p$ and $p+1$ of $M_2$ both have entries from the set of variables (i.e. of the form $x^{(i)}_{jk}$ for fixed $i$). Then $t_p=t_{p+1}$. 
\end{itemize}

Given such a monomial $m_1 m_2$, we can form a skew tableau filled with entries $i_{pq}$ and $j_{pq}$. The skew tableau has $a_1+a_2$ rows. The first $k_{\rho_1}$ rows are right-aligned, of lengths $l_1,\dots,l_{a_1},t_1,\dots,t_{k_1}$. The last $k_{\rho_2}$ rows are left-aligned, of lengths $t_1, \dots,t_{a_2}.$ The filling on row $p$ given by $i_{p,1},\dots,i_{p,l_p}$ for the first $a_1$ rows and by $j_{p,1},\dots,j_{p,k_p}$ for the rest. The shape of the tableau is illustrated in the following diagram:
\[\begin{tikzpicture}
\draw (0,0)--(0,-4)--(2,-4)--(2,-3)--(3,-3)--(3,-2)--(4,-2)--(4,-1)--(7,-1)--(7,2)--(6,2)--(6,1)--(5,1)--(5,0)--(0,0);
\draw[fill=gray!42] (0,0) -- (0,-1) -- (7,-1)--(7,0)--(0,0);
\end{tikzpicture}\]
The middle gray area is of width $t_1$ and height $k_1$. 
\begin{eg} Continuing the previous example, the monomial 
\[x^{(2)}_{11} x^{(2)}_{22} x^{(2)}_{33} x^{(1)}_{11}(x^{(1)}_{13})^2 x^{(1)}_{22} x^{(1)}_{24} x^{(1)}_{25} x^{(3)}_{13} x^{(3)}_{24} x^{(3)}_{35} x^{(4)}_{16} x^{(4)}_{27}  \]
can be re-written as
\[m^{(1)}_{11} m^{(1)}_{22} m^{(1)}_{33} m^{(2)}_{12} m^{(2)}_{14} m^{(2)}_{14} m^{(2)}_{23} m^{(2)}_{25}m^{(2)}_{26} m^{(2)}_{34}m^{(2)}_{45} m^{(2)}_{56}m^{(2)}_{67}m^{(2)}_{78}.\] 
This corresponds to the filled skew Young tableau
\[\scriptsize \young(::1,::2,::3,244,356,4,5,6,7,8)
\]
\end{eg}

\begin{lem}\label{lem:semis}
The monomial is the initial term of a product of minors of the $M_i$ if and only if the associated filled skew tableau is semi-standard. 
\end{lem}
\begin{proof}
Suppose $m_1 m_2$ is the initial term of a product of minors of the $M_i$. For a minor $k_j \times k_j$ minor of $M_i$ appearing in the product, draw a $k_j$-length column and fill it with the indices of the columns appearing in the minor. We can use these columns to build a filled skew tableau of the shape above. The labels of each column are strictly decreasing. We can then permute the blocks in each row, so that they are weakly increasing. This preserves the property of the columns being strictly decreasing, and so the resulting tableau is semi-standard. It isn't hard to see that this filled tableau is exactly the filled tableau required. 

For the converse, suppose the skew tableau is semi-standard. Each column of the tableau gives a list of columns in either $M_i$ of size $k_j$ for some $j \in S_i.$ The initial term of the product of minors given by each column of the tableau is exactly $m$. 
\end{proof}
\begin{rem} Lemma \ref{lem:semis} is an essential step in the proof that the minors of $A_i$ are a SAGBI basis. This is why the proof works only for $Y$-shaped quiver flag varieties. The fact that there are only two legs in the $Y$-shaped quiver means that we can fit $M_1$  and $M_2$ into a single matrix, and all minors are either top or bottom aligned. This in turn means that we can index initial terms of these minors in a single skew-tableau which looks like the union of a tableau and the second transposed tableau. Any additional `legs' makes this impossible.
\end{rem}

For $Y$-shaped quiver flag varieties, we change our notation for the quiver Pl\"ucker algebra slightly. We relabel the generators of $\CC[p]$ to be $\CC[p^1_\sigma,p^2_\sigma]$ where $p^1_\sigma$ runs over strictly increasing sequences $\sigma$ with entries from $\{1,\dots,\tilde{s}_{\rho_1}\}$ of sizes $k_i, i \in S_1'$ and $p^2_\sigma$ runs over strictly increasing sequences $\sigma$ with entries from $$\{\tilde{s}_{\rho_1}-\tilde{s}_1+1,\dots,\tilde{s}_{\rho_1}-\tilde{s}_1+\tilde{s}_{\rho_2}\}$$ of sizes $k_i, i \in S_2$. Moreover, we only consider $\sigma$ such that $p^i_\sigma$ does not define an identically zero minor on $M_i$.

In this notation, the map $\phi:\CC[p] \to \Pl(Q,\br)$ takes $p^1_\sigma$ to the minor of $M_1$ indexed by columns $\sigma$ and rows $\{\tilde{s}_{\rho_1}-|\sigma|+1, \dots,\tilde{s}_{\rho_1}\}$, and $p^2_\sigma$ to the minor of $M_2$ indexed by columns $\sigma$ and rows $\{1, \dots,|\sigma|\}.$

We now define a partial order, $\leq$, on the variables of $\CC[p]$. 
\begin{mydef} Let $\sigma=\{\sigma_1 < \sigma_2 < \cdots < \sigma_k\}$ and $\tau=\{\tau_1 < \cdots < \tau_l\}.$ Let $\sigma'$ be the partition formed by taking the last $k_1$ entries of $\sigma$, and $\sigma''$ the partition formed by taking the first $k_1$ entries.  Then $p^i_\sigma \leq p^j_\tau$ if one of the following cases holds:
\begin{itemize}
\item $i=j=2$ and $|\sigma| \geq |\tau|$ and $\sigma_s \leq \tau_s$ for all $s \leq l,$

\item $i=j=1$ and $|\sigma| \leq |\tau|$ and $\sigma_s \leq \tau_s$ for all $s \leq k,$

\item $i=2$ and $j=1$ and $\sigma'_s \leq \tau''_s$ for all $s$.
\end{itemize}  
\end{mydef}
We can extend this to a total order, $\prec$, where we say that $p^i_\sigma \prec p^j_\tau$ if one of the following cases holds:
\begin{itemize}
\item $i=j=2$ and $|\sigma| \geq |\tau|$ and if $|\sigma|=|\tau|$ then $\sigma$ comes before $\tau$ in the lexicographic order,

\item $i=j=1$ and $|\sigma| \leq |\tau|$ and if $|\sigma|=|\tau|$ then $\sigma$ comes before $\tau$ in the lexicographic order.
\item $i=2$ and $j=1$.
\end{itemize} 
Extend $\prec$ to an ordering on the monomial of $\CC[p]$ using the reverse lexicographic ordering. 

Suppose we are given a monomial in $\CC[x^{(i)}_{jk}]$ which is the initial term of a polynomial in the minors of the $A_i$. Consider the columns of the associated semi-standard skew Young tableau. The set of decorations of a column is either given by $\sigma=\{i_{pq}\}_{p}$ or $\tau=\{j_{pq}\}_{p}$, which define $p^1_\sigma$ or $p^2_\tau$ respectively. It is clear semi-standard skew Young tableau of these shapes correspond to monomials in $\CC[p]$ supported on chains (under the partial order $\leq$).  Call such monomials \emph{semi-standard}. 

\begin{prop} Semi-standard monomials are a basis for $\Pl(Q,\br) \cong \CC[p]/\ker(\phi)$ as a $\CC$-vector space.
\end{prop}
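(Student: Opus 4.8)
The plan is to establish the two defining properties of a basis separately: that the semi-standard monomials stay linearly independent after applying $\phi$, and that they span the Pl\"ucker algebra $\mathrm{im}(\phi)$. Linear independence is the soft direction, and I would treat it first via initial terms. Since the initial term of a product of polynomials is the product of the initial terms, and the initial term of each nonzero minor of $M_1$ or $M_2$ is its diagonal monomial (as recorded above), for a semi-standard monomial $m=\prod p^{i}_\sigma$ the initial term $\mathrm{in}(\phi(m))$ is the product of the corresponding diagonal monomials in the $x^{(i)}_{jk}$. By the correspondence set up before the statement, this diagonal monomial is exactly the monomial attached to the semi-standard skew Young tableau whose columns are the indexing sequences $\sigma$, and that tableau — hence $m$ itself — is recovered uniquely from the monomial (sort each row, read off the columns). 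Thus $m\mapsto \mathrm{in}(\phi(m))$ is injective on semi-standard monomials, so in any hypothetical relation $\sum_m c_m\,\phi(m)=0$ the largest among the pairwise distinct monomials $\mathrm{in}(\phi(m))$ survives with nonzero coefficient, a contradiction.

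For spanning, the strategy is standard-monomial straightening. Since $\phi$ is an algebra homomorphism and $P_Q$ is a polynomial ring, $\mathrm{im}(\phi)$ is spanned as a $\CC$-vector space by the $\phi(m)$ over all monomials $m$, so it suffices to rewrite each $\phi(m)$ as a $\CC$-combination of $\phi(m')$ with $m'$ semi-standard. I would proceed by induction with respect to the term order $\prec$, which is a well-order on the monomials of $P_Q$. If $m$ is not semi-standard, then some pair of its variables $p^{i}_\sigma,\,p^{j}_\tau$ is incomparable under $\leq$; the key input is a \emph{straightening relation} — the analogue of the Grassmann--Pl\"ucker relations for the pair $(M_1,M_2)$ — of the form
\[
p^{i}_\sigma\, p^{j}_\tau \;=\; p^{i'}_{\sigma'}\, p^{j'}_{\tau'} \;+\; (\text{terms strictly smaller in } \prec) \pmod{\ker \phi},
\]
where $(\sigma',\tau')$ is the sorted, $\leq$-comparable pair obtained by replacing $(\sigma,\tau)$ by its columnwise meet and join. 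Multiplying such a relation by the complementary factor of $m$ expresses $\phi(m)$ as $\phi$ of a combination of strictly $\prec$-smaller monomials, so the induction closes, leaving only chain-supported, i.e. semi-standard, monomials.

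The heart of the argument — and the step I expect to be the main obstacle — is producing these straightening relations with the correct leading-term behaviour, together with the sorting lemma that underlies them: that the product of diagonal terms is invariant under sorting $(\sigma,\tau)$ into a comparable pair, so that $\mathrm{in}(\phi(p^i_\sigma p^j_\tau))=\mathrm{in}(\phi(p^{i'}_{\sigma'} p^{j'}_{\tau'}))$ and the difference genuinely drops in $\prec$. In the single-matrix flag case this is classical, but here the subtlety is the mixed case $i=2,\ j=1$, where a top-aligned minor of $M_2$ must be straightened against a bottom-aligned minor of $M_1$ across the overlapping block of shared middle columns that glue $M_1$ and $M_2$. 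Verifying that the three cases of the partial order $\leq$ are exactly the comparabilities realised by sorting, and that the resulting relations lie in $\ker\phi$ while reducing leading terms, is where the real work lies; equivalently, this is the statement that these relations form a Gr\"obner basis for the Pl\"ucker ideal under $\prec$, which is precisely what the analysis of the analogue of the Pl\"ucker relations is set up to establish.
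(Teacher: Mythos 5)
Your skeleton is the same as the paper's: this is the standard-monomial-theory argument, with your linear-independence half (injectivity of $m \mapsto \mathrm{in}(\phi(m))$ on semi-standard monomials, recovering the tableau from the product of diagonal terms) corresponding to the paper's claim that no semi-standard monomial lies in $\mathrm{in}(\ker\phi)$, and your straightening induction corresponding to its claim that every non-semi-standard monomial does. The independence half is fine. The gap is that the step you flag as ``where the real work lies'' is the entire mathematical content of the paper's proof, and you never supply it: you do not construct the straightening relations, nor prove that \emph{anything} of the required form lies in $\ker\phi$. For $i=j$ one can indeed quote \cite{combinatorial}, but in the mixed case there is no classical result to appeal to, and the relation is not a formal consequence of single-matrix Pl\"ucker relations. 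What the paper does is: for an incomparable pair $p^1_\sigma, p^2_\tau$, take the minimal index where comparability fails, merge the relevant tails of $\sigma$ and $\tau$ into one sequence of length $p=|\sigma|+|\tau|-b+1$, and form the alternating sum $f=\sum_\pi \mathrm{sign}(\pi)\, p^1_{\pi(\sigma)}\, p^2_{\pi(\tau)}$ over shuffles $\pi$ of that sequence. Membership $f\in\ker\phi$ is proved by gluing the bottom $|\sigma|$ rows of $M_1$ and the top $|\tau|$ rows of $M_2$ into a single matrix with $p-1$ rows --- the two blocks overlap in the shared $x^{(1)}$-variables and agree there by construction --- so that $\phi(f)$ becomes an alternating multilinear function of $p$ columns of this matrix, which must vanish because those columns span a space of dimension at most $p-1$; the leading-term claim then follows from $p^2_{\pi(\tau)} \prec p^2_{\tau} \prec p^1_\sigma \prec p^1_{\pi(\sigma)}$. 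None of this appears in your proposal, and it is not a formality: the vanishing genuinely uses the overlap structure of $M_1$ and $M_2$, which is also what dictates the mixed case of the partial order $\leq$.

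A secondary inaccuracy: the two-term shape you posit for the straightening relation --- the columnwise meet/join pair $p^{i'}_{\sigma'}p^{j'}_{\tau'}$ plus strictly $\prec$-smaller terms, with equal initial monomials under $\phi$ --- is stronger than what the paper proves or needs, and its validity in the mixed case would itself require proof. The paper's kernel element is a long alternating sum, and the only property used is that its unique $\prec$-leading term is the incomparable product $p^i_\sigma p^j_\tau$; your spanning induction closes just as well with this weaker statement, so the right fix is to prove existence of kernel elements with that leading term (as above), not the meet/join normal form.
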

\begin{proof}
We show the statement that a monomial is not in the initial ideal of $\ker(\phi)$ under $\prec$ if and only if it is semi-standard. This implies the proposition.

First, we show that for any incomparable pair $p^i_\sigma,p^j_\tau$, the product $p^i_\sigma p^j_\tau$ is the initial term  (under $\prec$) of some polynomial in the kernel of $\phi$. If $i=j$, then  the proof is identical (or symmetrical, if $i=1$) to that in \cite[Theorem 14.6]{combinatorial}. The zero matrix entries do not pose a problem as any relation which holds in the larger ring generated by minors of the matrix without zero entries certainly holds in the rings we are considering.  

Suppose $i=1$ and $j=2$. Since $p^1_\sigma$ and $p^2_\tau$ are incomparable, there exists $k$ minimal such that $\sigma'_k<\tau''_k$.  If $l$ is the index of $\sigma'_k$ in $\sigma$, and $m$ the index of $\tau''_k$ is $\tau$, consider
\begin{equation}\label{permutation}
 \sigma_1 < \cdots < \sigma_l < \tau_m < \cdots < \tau_{|\tau|}.
 \end{equation}
This sequence has length $p=|\sigma|+|\tau|-b+1$. As in the proof of Theorem 14.6 of \cite{combinatorial}, for any permutation $\pi$ of $\{1,\dots,p\}$, define new partitions $\pi(\sigma)$ and $\pi(\tau)$ such that elements not from $\{\sigma_1,\dots,\sigma_l\}$ or $\{\tau_m, \dots, \tau_{|\tau|}\}$ are left untouched, and these are permuted via $\pi$. If $p^i_\alpha$ is not in $\CC[p]$, we set it to zero. We claim that the sum
$$f=\sum_{\pi} \text{sign}(\pi) p^1_{\pi(\sigma)} p^2_{pi(\tau)}$$
 is in the kernel of $\phi$. To see this, note that $f$ also defines a function on the set of  $p-1 \times\tilde{s}_{\rho_1}+\tilde{s}_{\rho_2}-\tilde{s}_1$ matrices. This is done by associating to $p^1_\alpha$ the top-aligned minor with columns chosen by $\alpha$; similarly, $p^2_\alpha$ 
is associated with the bottom-aligned minor. Then $f$ is multi-linear and alternating on the $p$ columns indexed by \eqref{permutation}. It must therefore vanish as the space of columns is at most $p-1$ dimensional. Let $M_1'$ be the sub-matrix of $M_1$ given by taking the bottom $|\sigma|$ rows, and $M_2'$ the sub-matrix of $M_2$ given by taking the top $|\tau|$ rows. Consider a matrix of size  $p-1 \times\tilde{s}_{\rho_1}+\tilde{s}_{\rho_2}-\tilde{s}_1$ with $M_1$ as a top right minor, and $M_2$ as a bottom left minor, and all other entries zero. $M_1$ and $M_2$ will overlap, but they agree on the overlap by construction. Then $\phi(f)$ agrees with $f$ thought of as a function on this large matrix, and so $\phi(f)$ vanishes. 

Note that $p^1_\sigma p^2_\tau$ is the initial term of $f$, as for all $\pi$,
\[p^2_{\pi(\tau)} \prec p^2_{\tau} \prec p^1_\sigma \prec p^1_{\pi(\sigma)}. \]
We have now shown that any non-semi-standard monomial is in $in(\ker(\phi))$. The converse -- that no semi-standard monomial is in the initial ideal of $\ker(\phi)$ -- can be argued precisely in the same way as in the second part of the proof of \cite[Theorem 14.6]{combinatorial}, as we have shown that each initial monomial has a unique semi-standard polynomial with this initial monomial.

\end{proof}
We can now complete the proof of Theorem \ref{thm:SAGBI}.
\begin{proof}
We have shown that a monomial $m$ in $\CC[x^{(k)}_{ij}]$ is the initial term of $\phi(f)$, where $f$ is a monomial in $\CC[p]$, if and and only if it defines a semi-standard skew Young tableau of an appropriate shape. Now suppose $m$ is the initial term of some polynomial in $im(\phi)$. By the above proposition, we can write the polynomial as $im(g)$ where $g$ is a sum of semi-standard monomials. Therefore $m$ must again define a semi-standard skew Young tableau of the appropriate shape. 

This implies that the initial algebra is the vector space spanned by all monomials given by semi-standard skew Young tableaux. Since every such monomial is the product of the monomials given by each of the columns in the tableau, and the monomial associated to a column is precisely the initial term of a minor of some $A_i$, this concludes the proof.\end{proof}

In the next section, we show that the degenerate toric variety associated to this SAGBI basis is a singular toric quiver variety and describe the quiver. 
\section{Ladder diagrams for certain degenerations}\label{sec:ladder}
In \cite{flagdegenerations}, the authors Batyrev, Ciocan-Fontanine, Kim, and van Straten  use \emph{ladder diagrams} to give a concrete description of the toric variety to which the flag variety degenerates. In this section, we give a new description of the degenerate toric variety by considering the ladder diagram as a quiver. We then generalize this construction to the degenerations of the $Y$-shaped quiver described in the previous section. 

For a general definition of a ladder diagram of a flag variety, see Definition 2.1.1 in \cite{flagdegenerations}. It can also be described as follows: the ladder diagram of $\Gr(n,r)$ is an $r \times (n-r)$ grid of unit squares such that the bottom left corner is at $(0,0)$. Let $O$ denote this vertex. For example, the ladder diagram of $\Gr(5,2)$ is
\[\begin{tikzpicture}[scale=0.6]
\draw (0,0) rectangle (1,1);
\draw (1,1) rectangle (2,2);
\draw (0,1) rectangle (1,2);
\draw (1,0) rectangle (2,1);
\draw (2,1) rectangle (3,2);
\draw (2,0) rectangle (3,1);
\draw[fill] (0,0) circle (3pt);
\end{tikzpicture}\]
where $O$ is marked. The ladder diagram of $\Fl(n,r_1,\dots,r_\rho)$ is the union of the ladder diagrams of $\Gr(n,r_i)$ for all $i$: for example, the ladder diagram of $\Fl(5,3,2,1)$ is the union of
\[\begin{tikzpicture}[scale=0.4]
\draw (0,0) rectangle (1,1);
\draw (1,1) rectangle (2,2);
\draw (0,1) rectangle (1,2);
\draw (1,0) rectangle (2,1);
\draw (0,2) rectangle (1,3);
\draw (1,2) rectangle (2,3);

\draw[fill] (0,0) circle (3pt);
\draw[line width=0.6mm] (0,0)--(2,0)--(2,3)--(0,3)--(0,0);

\end{tikzpicture} \hspace{2mm} \cup \hspace{2mm}
\begin{tikzpicture}[scale=0.4]
\draw (0,0) rectangle (1,1);
\draw (1,1) rectangle (2,2);
\draw (0,1) rectangle (1,2);
\draw (1,0) rectangle (2,1);
\draw (2,1) rectangle (3,2);
\draw (2,0) rectangle (3,1);
\draw[fill] (0,0) circle (3pt);
\draw[line width=0.6mm] (0,0)--(3,0)--(3,2)--(0,2)--(0,0);
\end{tikzpicture}  \hspace{2mm} \cup \hspace{2mm}
\begin{tikzpicture}[scale=0.4]
\draw (0,0) rectangle (1,1);

\draw (1,0) rectangle (2,1);

\draw (2,0) rectangle (3,1);
\draw (3,0) rectangle (4,1);
\draw[fill] (0,0) circle (3pt);

\draw[line width=0.6mm] (0,0)--(4,0)--(4,1)--(0,1)--(0,0);
\end{tikzpicture}  \hspace{2mm} \rightarrow \hspace{2mm}
\begin{tikzpicture}[scale=0.4]
\draw (0,0) rectangle (1,1);
\draw (1,1) rectangle (2,2);
\draw (0,1) rectangle (1,2);
\draw (1,0) rectangle (2,1);
\draw (2,1) rectangle (3,2);
\draw (0,2) rectangle (1,3);
\draw (1,2) rectangle (2,3);
\draw (2,0) rectangle (3,1);
\draw (3,0) rectangle (4,1);
\draw[fill] (0,0) circle (3pt);
\draw[line width=0.6mm] (0,0)--(2,0)--(2,3)--(0,3)--(0,0);
\draw[line width=0.6mm] (0,0)--(3,0)--(3,2)--(0,2)--(0,0);
\draw[line width=0.6mm] (0,0)--(4,0)--(4,1)--(0,1)--(0,0);
\end{tikzpicture}.\]
Removing the outlines of each component, the we see that the ladder diagram of the flag variety is 
\[\begin{tikzpicture}[scale=0.6]
\draw (0,0) rectangle (1,1);
\draw (1,1) rectangle (2,2);
\draw (0,1) rectangle (1,2);
\draw (1,0) rectangle (2,1);
\draw (2,1) rectangle (3,2);
\draw (0,2) rectangle (1,3);
\draw (1,2) rectangle (2,3);
\draw (2,0) rectangle (3,1);
\draw (3,0) rectangle (4,1);
\draw[fill] (0,0) circle (3pt);
\end{tikzpicture}.\]
The authors in \cite{flagdegenerations} associate to the ladder diagram another graph, and then describe the polytope of the degeneration of the flag variety given above by paths in this graph. Instead, we associate to the ladder diagram a quiver. 

The first step is to add more vertices to the ladder diagram. For $\Gr(n,r)$, add vertices at $(i,j)$ for $1 \leq j < r, 1 \leq i <n-r$ and at $(n-r,r)$. So for $\Gr(5,2)$, the new diagram is\[\begin{tikzpicture}[scale=0.6]
\draw (0,0) rectangle (1,1);
\draw (1,1) rectangle (2,2);
\draw (0,1) rectangle (1,2);
\draw (1,0) rectangle (2,1);
\draw (2,1) rectangle (3,2);
\draw (2,0) rectangle (3,1);
\draw[fill] (0,0) circle (3pt);
\draw[fill] (1,1) circle (3pt);
\draw[fill] (2,1) circle (3pt);
\draw[fill] (3,2) circle (3pt);
\end{tikzpicture}\]
For a flag variety, the new diagram is again the union of the diagrams for each $\Gr(n,r_i)$, with an extra vertex at $(n_{i-1}-r_{i-1},r_{i})$ for each $i>1.$ So the ladder diagram for $\Fl(5,3,2,1)$ is
\[\begin{tikzpicture}[scale=0.6]
\draw (0,0) rectangle (1,1);
\draw (1,1) rectangle (2,2);
\draw (0,1) rectangle (1,2);
\draw (1,0) rectangle (2,1);
\draw (2,1) rectangle (3,2);
\draw (0,2) rectangle (1,3);
\draw (1,2) rectangle (2,3);
\draw (2,0) rectangle (3,1);
\draw (3,0) rectangle (4,1);
\draw[fill] (0,0) circle (3pt);
\draw[fill] (1,1) circle (3pt);
\draw[fill] (2,1) circle (3pt);
\draw[fill] (3,2) circle (3pt);
\draw[fill] (3,1) circle (3pt);
\draw[fill] (4,1) circle (3pt);
\draw[fill] (2,2) circle (3pt);
\draw[fill] (1,2) circle (3pt);
\draw[fill] (2,3) circle (3pt);
\end{tikzpicture}.\]

To make this a quiver, we consider paths between vertices where one is allowed to travel up and to the right only.  Then this defines a quiver where the vertices are the vertices in the ladder diagram, and the number of arrows between two vertices is the number of paths in the diagram between them. Call this quiver the \emph{ladder quiver} $L(n,r_1,\dots,r_\rho)$. Define a dimension vector by setting all vertices to have dimension 1. 

For example, the quiver associated to $\Gr(5,2)$ is
\begin{center}
\includegraphics[scale=0.5]{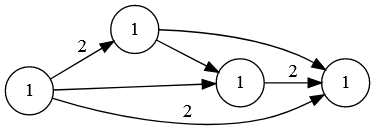}
\end{center}

Just as in the construction of quiver flag varieties, this quiver and dimension vector determines a vector space $V$ and a torus $T$ acting on $V$. The torus $T$ acts with weights $D_a$, where $a$ ranges over the arrows in the ladder quiver. 

Call the vertices at $(r_i,n-r_i)$ the \emph{external vertices}. Let $w$ be the character of $T$ such that the component at a vertex $v$ is $1$ if $v$ is an external vertex, and $0$ otherwise. 
Instead of taking the GIT quotient with stability condition $(1,\dots,1)$, we instead define the toric variety $X(n,r_1,\dots,r_\rho)$ to be the GIT quotient with stability condition $w$.  

The path from the source to an external vertex $(r_i,n-r_i)$ corresponds to a Weil divisor which is in fact Cartier. Call the associated line bundle $L_i$. These line bundles are globally generated and define an embedding into a product of projective spaces which is also the target of the Pl\"ucker embedding of the flag variety. 

We now define ladder quivers for $Y$-shaped quivers. First, suppose $Q$ is a $Y$-shaped quiver such that there is only one arrow out of vertex 1. An example of such a quiver is
 \begin{center}
  \includegraphics[scale=0.5]{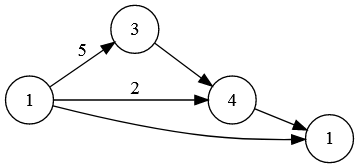}
\end{center}
Build the ladder diagram almost exactly as for the flag case. First, take the union of the ladder diagrams of $\Gr(\tilde{s}_i,r_i)$ for $i=1,\dots,\rho$. This is the \emph{extended} ladder diagram. In this example, it is

\[\begin{tikzpicture}[scale=0.6]
\draw (0,0) rectangle (1,1);
\draw (1,1) rectangle (2,2);
\draw (0,1) rectangle (1,2);
\draw (1,0) rectangle (2,1);
\draw (2,1) rectangle (3,2);
\draw (2,0) rectangle (3,1);
\draw (3,0) rectangle (4,1);
\draw (4,0) rectangle (5,1);
\draw (5,0) rectangle (6,1);
\draw (6,0) rectangle (7,1);
\draw (0,2) rectangle (1,3);
\draw (1,2) rectangle (2,3);
\draw (2,2) rectangle (3,3);
\draw(0,3) rectangle (1,4);
\draw (1,3) rectangle (2,4);

\draw (2,3) rectangle (3,4);
\draw[line width=0.6mm] (0,0)--(2,0)--(2,3)--(0,3)--(0,0);
\draw[line width=0.6mm] (0,0)--(3,0)--(3,4)--(0,4)--(0,0);
\draw[line width=0.6mm] (0,0)--(7,0)--(7,1)--(0,1)--(0,0);

\end{tikzpicture}.\]
As in the flag case, we've outlined the ladder diagram of each $\Gr(\tilde{s}_i,r_i)$. Unlike in the flag case, the sequence $(r_1,\dots,r_\rho)$ is not necessarily decreasing, and the diagram of $\Gr(\tilde{s}_i,r_i)$ may be taller than the diagram of $\Gr(\tilde{s}_{i-1},r_{i-1})$. To obtain the ladder diagram of the full quiver, we truncate this extended ladder diagram, and insists that the maximum height of the diagram between $x=\tilde{s}_{i-1}-r_{i-1}$ and $x=\tilde{s}_i-r_i$ is at most $r_i$. We add vertices as in the flag case: at interior points and at the intersection points $(\tilde{s}_{i-1}-r_{i-1},r_{i})$ for each $i>1.$
The ladder diagram of the above example is then:
\[\begin{tikzpicture}[scale=0.6]
\draw (0,0) rectangle (1,1);
\draw (1,1) rectangle (2,2);
\draw (0,1) rectangle (1,2);
\draw (1,0) rectangle (2,1);
\draw (2,1) rectangle (3,2);
\draw (2,0) rectangle (3,1);
\draw (3,0) rectangle (4,1);
\draw (4,0) rectangle (5,1);
\draw (5,0) rectangle (6,1);
\draw (6,0) rectangle (7,1);
\draw (0,2) rectangle (1,3);
\draw (1,2) rectangle (2,3);
\draw (2,2) rectangle (3,3);

\draw (2,3) rectangle (3,4);
\draw[fill] (0,0) circle (3pt);
\draw[fill] (1,1) circle (3pt);
\draw[fill] (2,1) circle (3pt);
\draw[fill] (3,1) circle (3pt);
\draw[fill] (7,1) circle (3pt);
\draw[fill] (1,2) circle (3pt);
\draw[fill] (2,2) circle (3pt);
\draw[fill] (2,3) circle (3pt);
\draw[fill] (3,4) circle (3pt);
\end{tikzpicture}.\]

We can now describe the proposed ladder diagram of a general $Y$-shaped quiver $Q$. Assume that there are exactly two arrows out of vertex $1$.  Recall that we have partitioned the vertices $\{2,\dots,\rho\}=S'_1 \sqcup S'_2$ according to which of the two branches of the quiver the vertex is on. Consider subquiver of $Q$ with vertices $S_2 \cup \{0\}$: this is a $Y$-shaped quiver for which we know how to build a ladder diagram.  Take this ladder diagram, and reflect it across the $y=-x$ axis, and then translate it so that what was the origin is at $(\tilde{s}_1-r_1,r_1)$. The ladder diagram of $Q$ is the union of this ladder diagram with the ladder diagram of the second subquiver with vertices $S_1 \cup \{0\}$. 
\begin{eg}\label{eg:Yshaped2} We draw the ladder diagram for
\begin{center}
\includegraphics[scale=0.5]{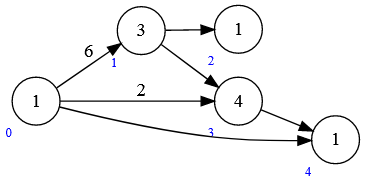}
\end{center}
Set $S_1:=\{1,3,4\}, S_2:=\{1,2\}.$ The reflected diagram of the quiver with vertices $\{0,1,2\}$ and the ladder diagram of the quiver with vertices $\{0,1,3,4\}$ are pictured below:
\[\begin{tikzpicture}[scale=0.6]
\draw (0,0) rectangle (1,1);
\draw (1,0) rectangle (2,1);
\draw (2,0) rectangle (3,1);
\draw (0,1) rectangle (1,2);
\draw (1,1) rectangle (2,2);
\draw (2,1) rectangle (3,2);
\draw (0,2) rectangle (1,3);
\draw (1,2) rectangle (2,3);
\draw (2,2) rectangle (3,3);
\draw (-2,2) rectangle (-1,3);
\draw (-1,2) rectangle (0,3);

\draw[fill] (0,0) circle (3pt);
\draw[fill] (0,2) circle (3pt);
\draw[fill] (1,1) circle (3pt);
\draw[fill] (1,2) circle (3pt);
\draw[fill] (2,1) circle (3pt);
\draw[fill] (2,2) circle (3pt);
\draw[fill] (-2,2) circle (3pt);
\draw[fill,green] (3,3) circle (3pt);

\end{tikzpicture} \hspace{5mm}
\begin{tikzpicture}[scale=0.6]
\draw (-1,0) rectangle (0,1);
\draw (-1,1) rectangle (0,2);
\draw (-1,2) rectangle (0,3);
\draw (0,0) rectangle (1,1);
\draw (1,1) rectangle (2,2);
\draw (0,1) rectangle (1,2);
\draw (1,0) rectangle (2,1);
\draw (2,1) rectangle (3,2);
\draw (2,0) rectangle (3,1);
\draw (3,0) rectangle (4,1);
\draw (4,0) rectangle (5,1);
\draw (5,0) rectangle (6,1);
\draw (6,0) rectangle (7,1);
\draw (0,2) rectangle (1,3);
\draw (1,2) rectangle (2,3);
\draw (2,2) rectangle (3,3);

\draw (2,3) rectangle (3,4);
\draw[fill,blue] (-1,0) circle (3pt);
\draw[fill] (0,1) circle (3pt);
\draw[fill] (0,2) circle (3pt);
\draw[fill] (1,1) circle (3pt);
\draw[fill] (2,1) circle (3pt);
\draw[fill] (3,1) circle (3pt);
\draw[fill] (7,1) circle (3pt);
\draw[fill] (1,2) circle (3pt);
\draw[fill] (2,2) circle (3pt);
\draw[fill] (2,3) circle (3pt);
\draw[fill] (3,4) circle (3pt);

\end{tikzpicture}.
\]
The ladder diagram for the entire quiver is
\[\begin{tikzpicture}[scale=0.6]
\draw (-1,0) rectangle (0,1);
\draw (-1,1) rectangle (0,2);
\draw (-1,2) rectangle (0,3);
\draw (0,0) rectangle (1,1);
\draw (1,1) rectangle (2,2);
\draw (0,1) rectangle (1,2);
\draw (1,0) rectangle (2,1);
\draw (2,1) rectangle (3,2);
\draw (2,0) rectangle (3,1);
\draw (3,0) rectangle (4,1);
\draw (4,0) rectangle (5,1);
\draw (5,0) rectangle (6,1);
\draw (6,0) rectangle (7,1);
\draw (0,2) rectangle (1,3);
\draw (1,2) rectangle (2,3);
\draw (2,2) rectangle (3,3);
\draw (2,3) rectangle (3,4);
\draw (-2,2) rectangle (-1,3);
\draw (-3,2) rectangle (-2,3);

\draw[fill,blue] (-1,0) circle (3pt);
\draw[fill,green] (2,3) circle (3pt);
\draw[fill] (-1,2) circle (3pt);
\draw[fill] (-3,2) circle (3pt);
\draw[fill] (0,1) circle (3pt);
\draw[fill] (0,2) circle (3pt);
\draw[fill] (1,1) circle (3pt);
\draw[fill] (2,1) circle (3pt);
\draw[fill] (3,1) circle (3pt);
\draw[fill] (7,1) circle (3pt);
\draw[fill] (1,2) circle (3pt);
\draw[fill] (2,2) circle (3pt);
\draw[fill] (3,4) circle (3pt);
\end{tikzpicture}.\]
\end{eg}
As in the flag case, we can define a toric variety using the ladder diagram of a $Y$-shaped quiver, by interpreting the ladder diagram as a quiver. Just as in the flag case, we consider paths in the ladder diagram that travel to the right and up. The resulting quiver is called the ladder quiver, $L(Q)$. The vertex at $(0,0)$ is taken to be the source (in that we quotient by the $\CC^*$ to obtain an effective action, as for quiver flag varieties). 

Denote the set of the arrows of $L(Q)$ as $L(Q)_1$, and the set of vertices as $L(Q)_0$. The divisor data for $L(Q)$ is given as follows. Label the vertices $x_0,\dots,x_l$ for some $l$, where $x_0$ is the vertex at $(0,0)$. Let $K=(\CC^*)^l$ be the torus. Let $e_{x_i}, i=1,\dots, l$ be the standard basis of the character lattice $\LL^\vee$ of $K$. Set $e_{x_0}=0$. For each arrow $a \in L(Q)_1$, denote by $D_a \in \LL^\vee$ the weight $D_a=-e_{s(a)}+e_{t(a)}.$ The weights are given by the $D_a$ for all arrows $a$ in $L(Q)$.

Inherited from the ladder diagrams of the two legs of the $Y$-shaped quiver is the notion of \emph{external vertices}, and source vertices (which are the sources in the ladder quiver of each leg). There is an external vertex $v_i \in L(Q)_0$ for each $i \in Q_1-\{0\}$. Denote the source vertices from the two legs of the quiver as $O_1$ and $O_2$ (the blue and green dots above). Note that $O_2$ (in green) is not a source vertex in $L(Q)$ -- it is the original source vertex from the ladder quiver of the second leg. Then $O_1$ is $x_0$.  The vertex $O_2$ is also external vertex associated to $1 \in Q_1$; that is $v_1$ and $O_2$ are the same vertex.  

 \begin{mydef}\label{def:wQ}
 The \emph{ladder stability condition} is 
 \[w_Q:=\sum_{i \in S_1}(-e_{O_1}+ e_{v_i})-\sum_{i \in S_2}(-e_{O_2}+ e_{v_i}).\]
  The change in signs between $S_1$ and $S_2$ takes into account that ladder diagram for the $S_2$ leg has been reflected, but arrows still move up and to the right. 
 \end{mydef}
 The associated toric variety is denoted $X(Q)$.
\begin{lem} There are $\dim(M_\theta(Q,\br))$ boxes in the ladder diagram of $Y$-shaped quiver flag variety.
\end{lem}
\begin{proof}
The quiver flag variety is a tower of Grassmannian bundles of the form $\Gr(s_i,r_i)$, so the lemma is stating that there are $\sum_{i=1}^\rho r_i(s_i-r_i)$ boxes in the ladder diagram. We first assume that $Q$ is a $Y$-shaped quiver with just one leg. When building the ladder diagram, for each vertex $i$, we add an 
\[r_i \times (k_{i}-k_{i-1})\]
grid of boxes,
where $k_i=\tilde{s}_i-r_i.$ In this case,
\[\tilde{s}_i=\tilde{s}_{i-1}+n_{0i}, \hspace{5mm} s_i=r_{i-1}+n_{0i}.\]
So 
\[k_{i}-k_{i-1}=n_{0i}+r_{i-1}-r_{i}=s_i-r_i.\]
So the lemma holds in the special case of a one-legged $Y$-shaped quiver. By comparing the dimensions of each leg of general $Y$-shaped quiver and the associated ladder diagram, this proves the lemma. 
\end{proof}
\begin{cor} The dimension of $X(Q)$ is the number of boxes in $L(Q)$.
\end{cor}
\begin{proof} 
The dimension of $X(Q)$ is 
\[|L(Q)_1|-|L(Q)_0|+1.\]
We'll show that this is equal to the number of boxes in the ladder quiver. For each vertex $v$ that is not $O_1$ in $L(Q)_0$, consider (if it exists) the box $B_v$ that whose upper right corner $v$. Consider the strip of boxes to the left and below $B_v$, and in the same row or column of $B_v$ that do not have a vertex on their upper right corner. That is, go along to the left and stop right before the first box that has a vertex in the upper right corner, then go back to $B_v$ and do the same thing going downwards. Let $c_v$ be the number of boxes in this strip. It's easy to see that the number of arrows into $v$ is exactly $c_v+1$, and that each box appears in exactly one such strip. This proves that 
\[|L(Q)_1|-|L(Q)_0|+1\]
is equal to the number of boxes in $L(Q)$. 
\end{proof}
\begin{cor} The dimension of $X(Q)$ equals the dimension of $\dim(M_\theta(Q,\br))$.
\end{cor}

We can associate weights $D_i \in \LL^\vee$ for each vertex $i \in Q_0-\{0\}$. For $i \in S_j$, $D_i$ is the weight associated to a path in $L_Q$ between $O_j$ and the external vertex $v_i$. Note that if $i \in S_1$, the path goes from $O_1$ to $v_i$, whereas if $i \in S_2'$, the path goes from $v_i$ to $O_2$. In particular, the ladder stability condition $w_Q$ corresponds to the weight
\begin{equation}\label{eq:ladderstability}
\sum_{i=1}^\rho D_i.\end{equation}

Let $S_Q$ be the homogeneous coordinate ring of $L(Q)$, so that 
\[S_Q:=\CC[x_a: a \in L(Q)_1].\]
This is a multi-graded algebra. It is graded by $\LL^\vee$: the grading of $x_a$ is $D_a$. For a path $p$ in the ladder quiver, let $m_p:=\prod_{a \in p} x_a \in S_Q.$ By Proposition 5.3.7 in \cite{CLS}, there is an isomorphism between $\Gamma(X(Q),\mathcal{O}(D_\alpha))$ and the $D_\alpha$ graded piece of $\LL^\vee$. 

\begin{lem} Suppose $m \in S_Q$ is a monomial in $S_Q$, with degree $D_i$. Then $m=m_p$ for a path between $O_j$ and $v_i$, where $i \in S_j.$
\end{lem}
\begin{proof}
Suppose that $i \in S_1$. Then since $m$ is $D_i$ graded, $x_{a_1}$ must divide $m$ for some $a_1 \in L(Q_1)$ with target $v_i$. By assumption, the component of the grading of $m$ at $s(a_1)$ is $0$. So either $s(a_1)=O_1$, or we can find another arrow $a_2$ with $t(a_2)=s(a_1)$ and $x_{a_1} x_{a_2}$ divides $m$. We can keep arguing in this way until we construct a path $p$ between $O_1$ and $v_i$. If $i \in S_2$, the same proof holds. 
\end{proof}

\begin{thm}\label{thm:generators} The sub-algebra of $S_Q$ 
\[ \CC[m: \deg(m) = \sum_{i=1}^\rho c_i D_i, c_i \geq 0],\]
is generated by monomials of degree $D_i$ for  $1 \leq i \leq \rho$.  
\end{thm}
\begin{proof}
To prove this, we temporarily define another, non-negative grading of $S_Q$. A similar grading plays a role in \cite{flagdegenerations}. Let $w$ be the grading defined by setting $w(x_a)$ to be the length of the arrow $a$ in the ladder quiver (recalling that it is a grid of $1 \times 1$ boxes). Then  for a monomial $m$ of degree $D_i$, $w(m)=\tilde{s}_i.$ Next, we show an intermediate claim:
\begin{claim} Suppose that $m$ is a monomial of degree 
\[(-\sum_{j \in S_2} c_j e_{v_j})+(\sum_{j \in S_1} c_j e_{v_j})\]
 for some $c_j \geq 0$. Then $m$ is a product $m=\prod_{p \in K} m_p,$ where each path in $p$ is a path from a vertex in  $\{O_1\} \cup \{v_j: j \in S_2'\}$ to a vertex in $\{O_1\} \cup \{v_j: j \in S_1\}$.
\end{claim}
\begin{proof}[Proof of claim]
Let $A>0$ be the minimum $w$-grading of $m_p$ where $p$ is a non-trivial path of the form in the claim.

Case 1: Suppose that $c_i=0$ for all $i \in S_1$. If there exist $c_j>0, j \in S_2$, then there is some arrow $a_0$ with source $v_j$ such that $x_{a_0}$ appears in $m$.  Since the usual grading of $m$ has no positive components, either $t(a_0)=O_1$ or there is another arrow $a_1$ such that $x_{a_0} x_{a_1}$ divides $m$ and $t(a_0)=s(a_1)$. We can continue in this way until we arrive at $O_1$ (if no such path exists then we get a contradiction). So there is monomial $m_p$ corresponding to this path such that $m_p$ divides $m$.

Case 2: Suppose that there is an $i \in S_1$ such that $c_i>0$. Then there is some arrow $a_0$ with target $v_i$ such that $x_{a_0}$ appears in $m$. Since the usual grading of $m$ is negative only at the components corresponding to external vertices from $S_2$, either $s(a)$ is such a vertex, or it is $O_1$, or we can find another arrow $a_1$ such that $t(a_i)=s(a_0)$. We can continue in this way until we arrive at an external vertex $v_j$ for $j \in S'_2$ such that $c_j>0$ or $O_1$.  In any case, we have constructed a path  $p$ from $O_1$ or an external vertex $v_j$  to $v_i$ and a monomial $m_p$ such that $m_p |m$. 

In both cases, $m_p$ is a factor of the form required in the lemma, and $w(m) \geq w(m_p)\geq A.$ If $w(m)=w(m_p)$, then $m=m_p$ and we are done. Otherwise, let $m'$ be the monomial such that $m_p m'=m.$ It is clear that $m'$ is again a monomial of the form given in the claim, so the same argument shows that $w(m')\geq A$. Since $w(m')<w(m)$, this process will eventually terminate. 
\end{proof}

Now suppose $m$ is a monomial of degree $\sum_{i=1}^\rho c_i D_i$. This line bundle is of the form in the claim, so we can write $m$ as a product of $m_p$ where the $p$ are paths between vertices from $S_2 \cup \{O_1\}$ and $S_1$. If $c_1>0$, there is necessarily at least one $m_p$ with target $1$ and source either $O_1$ or $v_j, j \in S_2$. If the first case, $m_p$ has degree $D_1$ and in the second case $c_j>0$ and $m_p$ has degree $D_j$.  If $c_1=0$, then by considering the definition of the $D_i$, we see that $c_i=0$ for all $i \in S_2$. In particular, any $m_p$ appearing in $m$ must correspond from a path from $O_1$ to an external vertex from $S_1$.  So in all cases, we can find $m_p$ graded by $D_k$ for some $k$ such that $m_p | m$ and $c_k>0$. As in the proof of the claim, dividing out by $m_p$ strictly decreases the $w$-weight, and the theorem follows. 
\end{proof}

We can reformulate this into a statement about divisors. 
\begin{cor}\label{cor:partition} Let $T$ be a multi-set of arrows in $L(Q)$ such that $\sum_{a \in T} D_a=\sum_{i=1}^\rho c_i D_i, c_i \geq 0.$ Then there is a partition 
\[T= \sqcup_{i=1}^\rho (T^i_1 \sqcup \cdots \sqcup T^i_{c_i})\]
such that for all $i,k$, 
\[\sum_{a \in T^i_k} D_a =D_i.\]
\end{cor}
\begin{proof} Translating Theorem \ref{thm:generators} into a statement about divisors, we see that given such a $T$ there is partition of $T$ such that each subset sums to some $D_i$. The fact that there are exactly $c_i$ such subsets for each $D_i$ is because the $D_i$ are linearly independent in $\LL^\vee$. 
\end{proof}

We now consider the anti-cones of $X(Q)$. Recall that, by definition, an  anti-cone is a subset  $S \subset L(Q)_1$ such that the $D_a, a \in S$ span a cone $C_S$ in $\LL^\vee$ containing $w_Q$ in the interior.  Anti-cones correspond to cones in the fan description of the toric variety (see \cite{crepantconjecture} for how to obtain a fan from the GIT construction of a toric variety).  

The ample cone is the intersection 
\[ \bigcap C_S^\circ,\]
where $S$ ranges over all anti-cones, and $C_S^\circ$ is the open cone spanned by the $D_a$.

\begin{lem}\label{lem:ample} Suppose $S$ is an anti-cone for the ladder quiver. Then for all $i=1,\dots,\rho$, $D_i \in C_S$.
\end{lem}
\begin{proof}
By assumption, there are rational numbers $d_a >0$ such that
\[ \sum_{a \in S} d_a D_a = w_Q.\]
By multiplying by a common denominator, we can assume that
\[ \sum_{a \in S} c_a D_a = c w_Q,\]
where $c, c_a$ are positive integers. By Corollary \ref{cor:partition}, we can partition the left-hand sum into smaller sums, each which add to $D_i$ for some $i$, and each $D_i$ appears $c>0$ times. So for each $i$ there are some $p_a \geq 0$ such that
\[\sum_{a \in S} p_a D_a=D_i.\]
Thus $D_i$ is in the cone over the $D_a, a\in S,$ as claimed. 
\end{proof}

\begin{lem} For every arrow $a$, there is a collection of paths $p_i$ not containing $a$ such that 
\[\sum_{i=1}^\rho \sum_{a \in p_i} D_a=w_Q.\]
\end{lem}
\begin{proof} For an external vertex $v_i$, there is a maximum and a minimum path between $O_j$ and $v_i$ corresponding to the further up/left and the furthest right/down path respectively. These two paths do not share any arrows. So for any arrow $a$ we can choose a set of paths between $O_j$ and $v_i$,  $i \in S_j$ such that $a$ is not contained in any of the paths. \end{proof}

\begin{rem}
 The lemma shows that the condition
\[ \bigcap_{S \in A_w} S = \emptyset\]
holds for $X(Q)$, so $\LL^\vee$ is identified with the class group. 
\end{rem}

The anti-canonical divisor is $\sum_{a \in L(Q)_1} D_a,$ using the isomorphism $Cl(X(Q)) \cong \LL^\vee$. At the coordinate corresponding to $x_j, j \in L(Q)_0$, the coefficient of $-K_{X(Q)} \in \LL^\vee$ is the number of the arrows out minus the number of arrows into $x_j$ \cite{hille}. Notice that all vertices except the external vertices and the source have the same number of arrows in and out. Therefore $-K_{X(Q)}$ is in the sub-space generated by the vectors associated to the the external vertices. 
\begin{lem} If $-K_{M_Q}=\sum_{i=1}^\rho c_i c_1(W_i)$, then
\begin{equation}\label{anticanonical}
-K_{X(Q)}=\sum_{i=1}^\rho c_i D_i.
\end{equation}
\end{lem}
\begin{proof}
First, we observe that 
\[\sum_{i=1}^\rho c_i D_i=\sum_{i \in S_1'} c_i e_{v_i}-\sum_{i \in S_2'} c_i e_{v_i} +(\sum_{i \in S_2'} c_i+c_1) e_{v_1}.\]
We first consider the subquiver corresponding to one branch of the ladder quiver, then describe what happens when they are overlaid. Consider a vertex $i \in S'_j$. Suppose $j_1$ is the vertex just before $i$ in $S_j$, and $j_2$ the one just after. By \cite{Craw2011}, 
\[c_i= n_{0i}+r_{j_1}-r_{j_2}.\]
In the ladder quiver of this branch, if $r_{j_2}<r_i,$ there are
\[n_{0i}+r_{j_1}-r_i+r_i-r_{j_2}=c_i\]
arrows into the external vertex $v_i$, and no arrows out. 
If $r_{j_2} \geq r_i,$ then there are 
\[n_{0i}+r_{j_1}-r_i\]
arrows into the vertex $v_i$, and $r_{j_2} -r_i$ arrows out, so the difference is again $c_i$. If $i \in S_1'$, this shows that the $e_{v_i}$ component of the right and left hand side of \eqref{anticanonical} agree. If $i \in S_2'$, then when the ladder quiver is reflected, arrows in and out are reversed, so again, the $e_{v_i}$ component of the right and left hand side of \eqref{anticanonical} agree. It remains to consider the $e_{v_1}$ component.

Label the two vertices with arrows from vertex $1$, $a \in S_1$ and $b \in S_2$. Then, in the ladder quiver corresponding to the $S_1$ component of the quiver,
\[n_{01}-r_a=\#\{\text{arrows into vertex } 1\} -\#\{\text{arrows out of vertex } 1\}.\]
When we move to the full ladder quiver, there are an additional 
\[\sum_{i \in S_2'} s_i-r_i=-r_b+\sum_{i \in S_2'} c_i.\]
arrows into vertex $v_1$.  The $v_1$ component of the left-hand side of \eqref{anticanonical} is the sum of the last two equations:
\[n_{01}-r_a-r_b+\sum_{i \in S_2'} c_i=c_1+\sum_{i \in S_2'} c_i,\]
which is the right-hand side of the same equation. 
\end{proof}

\begin{prop} Let $i \in Q_0$. Then $D_i$ is a Cartier divisor. In particular, by \eqref{anticanonical}, $X(Q)$ is a Gorenstein toric variety.
\end{prop}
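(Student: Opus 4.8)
The plan is to show that each Weil divisor $D_i$ is Cartier by exhibiting, on every maximal torus-invariant affine chart of $X(Q)$, a character that cuts out $D_i$; equivalently, I want to verify that the support function of $D_i$ is linear on each maximal cone of the fan. By the Lemma just proved, maximal cones (anti-cones) correspond to meanders, so the combinatorial task is: for a fixed meander $\mathcal{M}$ and a fixed $i \in Q_0 - \{0\}$, produce an element $u_i^{\mathcal{M}} \in \LL$ (the cocharacter lattice, i.e. the dual of the space where the $D_a$ live) such that $\langle u_i^{\mathcal{M}}, D_a \rangle$ equals the multiplicity of $a$ in the defining path of $D_i$ for every arrow $a$ \emph{not} in the complement of the anti-cone. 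Because the toric variety is presented via GIT data, the cleanest route is the dual one: in the chart indexed by the meander $\mathcal{M}$, the complementary arrows $L(Q)_1 - \mathcal{M}$ give the coordinate functions, and I need to show that $D_i$ is a $\ZZ$-linear combination of these complementary divisors (not merely a $\QQ$-linear combination), which is exactly the statement that $D_i$ is Cartier on that chart.

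The key structural input is that $D_a = -e_{s(a)} + e_{t(a)}$, so that a path in $L(Q)$ telescopes: the sum of $D_a$ along a path from a vertex $u$ to a vertex $w$ is simply $e_w - e_u$, independent of the path chosen. Thus $D_i = e_{v_i} - e_{O_j}$ (with $e_{O_2} = e_{x_0} = 0$) as an element of $\LL^\vee$, and in particular $D_i$ depends only on its endpoints. The strategy is then: fix a meander $\mathcal{M}$; since $\mathcal{M}$ is a spanning tree of $L(Q)$ (its union is ``the tree'' by the meander condition), there is for each vertex $w$ a \emph{unique} path inside $\mathcal{M}$ from $x_0$ to $w$, and I can read off $e_w$ as the signed sum of the $D_a$ over the arrows of $\mathcal{M}$ on that path. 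This writes every $e_w$, hence every $D_i = e_{v_i} - e_{O_j}$, as an integral combination of the arrows lying \emph{in} the meander. First I would confirm that $\mathcal{M}$, viewed as a subset of arrows, is genuinely a spanning tree on $L(Q)_0$ — acyclicity follows since $L(Q)$ is acyclic, and the count of arrows matches the rank of $\LL^\vee$ because the anti-cone is a basis — and then invoke the standard toric criterion (a Weil divisor is Cartier iff on each maximal cone it is cut out by a single character) to conclude.

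The main obstacle is bookkeeping the sign/orientation conventions and verifying that the telescoping genuinely produces an \emph{integral} (not just rational) expression uniformly across all meanders — i.e. that the arrows of the meander form a $\ZZ$-basis of the relevant lattice rather than merely a $\QQ$-basis. This is where I expect the real content to sit: I must check that for a spanning-tree subset $\mathcal{M}$ of a quiver with incidence weights $-e_{s(a)}+e_{t(a)}$, the map sending the free module on $\mathcal{M}$ to $\LL^\vee$ is an isomorphism over $\ZZ$. This is a familiar fact for graph incidence matrices (the reduced incidence matrix of a tree is unimodular, since one can order edges and vertices so the matrix is triangular with $\pm 1$ on the diagonal), so I would set up that triangular ordering explicitly using the tree structure rooted at $x_0$ and deduce unimodularity. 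Granting this, the Cartier property on each chart is immediate, and the final sentence of the proposition — that $X(Q)$ is Gorenstein — follows at once from \eqref{anticanonical}, since $-K_{X(Q)} = \sum_i c_i D_i$ is then an integral combination of Cartier divisors and hence itself Cartier.
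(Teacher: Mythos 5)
Your central idea is sound: the class in $\LL^\vee$ of a path depends only on its endpoints, and a meander contains, by definition, a path with the same endpoints as the reference path defining $D_i$; this is exactly the combinatorial fact driving the paper's own proof, which packages it dually by writing down the explicit character $\delta_\sigma$ (value $+1$ on arrows of the meander's path $P_i$, $-1$ on arrows of the reference path $\Pi_i$, $0$ elsewhere and on overlaps) and checking it is divergence-free at every vertex, i.e.\ vanishes on $\image(\pi^\vee)$. However, two steps of your write-up are wrong as stated. First, your Cartier criterion is backwards. For the chart $U_\sigma$ attached to a meander $\mathcal{M}$, the rays of $\sigma$ are the arrows \emph{not} in $\mathcal{M}$, so the toric prime divisors that do not meet $U_\sigma$ are exactly those indexed by $a \in \mathcal{M}$; consequently the restriction $\mathrm{Cl}(X(Q)) \to \mathrm{Cl}(U_\sigma)$ is surjective with kernel the subgroup generated by $\{D_a : a \in \mathcal{M}\}$, and $D_i$ is Cartier (equivalently, principal) on $U_\sigma$ if and only if its class lies in that subgroup --- generated by the arrows \emph{in} the meander, not the ``complementary divisors'' of your first paragraph. (Membership in the span of the complementary classes is typically vacuous: in most charts those classes generate all of $\LL^\vee$.) Your second paragraph then computes with the meander arrows, contradicting your first; so as written, the bridge between your telescoping computation and Cartierness is never correctly in place.

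Second, the claim that a meander is a spanning tree of $L(Q)$ whose arrows number $\rank \LL^\vee$ and form a $\ZZ$-basis is false: the fan of $X(Q)$ is not simplicial, and minimal anti-cones can be much smaller than $\rank\LL^\vee$. Concretely, in the ladder quiver of $\Gr(5,2)$ the lattice path $(0,0)\to(0,1)\to(0,2)\to(1,2)\to(2,2)\to(3,2)$ passes through no intermediate quiver vertex, hence is a single arrow $a$ from the source to the external vertex; since $w = 5 e_{(3,2)} = 5 D_a$, the one-element set $\{a\}$ is already a meander/minimal anti-cone, and it misses the internal vertices $(1,1)$ and $(2,1)$ entirely. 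So ``a unique path inside $\mathcal{M}$ from $x_0$ to every vertex $w$'' does not exist, and your unimodularity step rests on a false premise. Fortunately you never needed it: by the definition of a meander, $\mathcal{M}$ contains a path $p_i$ between $O_j$ and $v_i$, and telescoping along $p_i$ alone gives $D_i = \sum_{a \in p_i} D_a$ (up to the sign convention on the $S_1$ leg, where arrows point out of $v_i$), which lies in the subgroup generated by $\{D_a : a \in \mathcal{M}\}$ with integer (indeed $0/1$) coefficients. With the criterion of the previous paragraph corrected, this one line finishes the Cartier claim, and the Gorenstein conclusion via \eqref{anticanonical} goes through as you say; the repaired argument is then the class-group dual of the paper's proof.
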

\begin{proof}
We use Kempf's descent lemma. We can define the trivial line bundle
\[L=\CC \times U^{ss}_{w_Q} \to U^{ss}_{w_Q}\]
on the semi-stable locus $U^{ss}_{w_Q} \subset \CC^{|L(Q)_1|}$, and endow it with a $K$-action using the weight $D_i$. This trivial line bundle descends to the quotient if for every $z=(z_a)_{a \in L(Q)_1}  \in U^{ss}_{w_Q}$, the stabilizer of $z$ acts trivially on the fiber of $L$. Recall that if $z$ is semi-stable, then 
\[ S=\{a: z_a \neq 0\}\]
is an anti-cone. So by the lemma above, we can find $a_1,\dots,a_l \in S$ such that
\[D_i= \sum_{j=1}^l e_j D_{a_j}.\]
The weights $D \in \chi(K)$ are characters, so they can be viewed as maps $\chi_D: K \to \CC^*$. Let $t \in K$ be in the stabilizer of $z$. The action of $t$ on the $a^{th}$ coordinate is given by multiplication by $\chi_{D_a}(t)$.  So for all $a \in S$, $\chi_{D_a}(t)=1.$ The action of $t$ on the fiber of $L$ is given by multiplication by $\chi_{D_i}(t)$, and
\[\chi_{D_i}(t)=\chi_{\sum_{j=1}^l D_{a_j}}(t)=\prod_{j=1}^l \chi_{D_{a_j}}(t)=1.\]
Therefore $L$ descends to the quotient, and we obtain the line bundle $L_i=\mathcal{O}(D_i)$. So $D_i$ is Cartier. 
\end{proof}

Let $L_i$ be the line bundle associated to the Cartier divisor $D_i$. 
\begin{cor} The $L_i$ are nef and globally generated.
\end{cor} 
\begin{proof}
Lemma \ref{lem:ample} show that $L_i$ is nef. $X(Q)$ is a toric variety, so $L_i$ is globally generated.
\end{proof}

\begin{cor} The line bundle $L_{w_Q}$ is very ample.
\end{cor}
\begin{proof} By definition of the GIT quotient giving $X(Q)$, we know that
\[X(Q)=\Proj(\oplus_{n\geq 0} \Gamma(L_{w_Q}^{\otimes n})).\]
Recall that 
\[L_{w_Q}=\otimes_{i=1}^\rho L_i,\]
so $ \Gamma(L_{w_Q}^{\otimes n})$ is the $n \sum D_i$ graded piece of $S_Q$. By Theorem \ref{thm:generators}, a monomial element of  $\Gamma(L_{w_Q}^{\otimes n})$ is a product of monomials, where there are some number $k_i$ of each degree $D_i$.  But since the divisors corresponding to the $L_i$ are linearly independent, in fact, there are $n$ monomials of each degree $D_i$. So we can write the monomial section of $\Gamma(L_{w_Q}^{\otimes n})$ as a product of $n$ monomial sections of $\Gamma(L_{w_Q})$ This shows that the on the right hand side in the statement of the corollary is generated in degree one. Therefore $L_{w_Q}$ is very ample. 
\end{proof}

There are exactly ${\tilde{s}_i \choose k_i}$ paths between $O_j$ and $v_i$ for a vertex $i \in S_j$ in the \emph{extended} ladder diagram. We can index such a path by size $k_i$ subset of $\{1,\dots,\tilde{s}_i\}$ by recording the horizontal steps from \emph{right to left}. Some of the paths are deleted when truncating to $L(Q)$. For a path between $O_j$ and $v_i$ indexed by a subset $J$ in the extended ladder diagram, we set $s^i_J\in \Gamma(L_i)$ to be associated section if $p$ is a path in $L(Q)$, and set $s_p=0$ otherwise. Then for each $i$, the set of sections $\{s^i_J: p \text{ a path in $L(Q)$}\}$ is a basis of $\Gamma(L_i)$. 
\begin{prop}\label{prop:ce} There is a closed embedding 
\[X(Q) \to \prod_{i=1}^\rho \PP^{\binom{\tilde{s}_i}{k_i}-1},\]
given by the map
\[\psi:=\CC[p] \to \oplus_{(a_i) \in \ZZ_{\geq 0}^{\rho}} \Gamma(\bigotimes_{i=1}^\rho L_i^{\otimes a_i}), \hspace{3mm} p^i_J \mapsto s^i_J.\]
\end{prop}
\begin{proof}
Let $A_i$ be the number of paths from $O_j$ to $v_i$ in the ladder diagram. 

Since each $L_i$ is globally generated, there is a map 
\[X(Q) \to \prod_{i=1}^\rho \PP^{A_i-1},\]
corresponding to the map of graded rings above. We need to show that it is a closed embedding. We can compose this map with the Segre embedding to get a map
\[X(Q) \to \PP^{A-1}.\]
where $A=\prod_{i=1}^\rho A_i$. 
 Since $L_{w_Q}$ is very ample, there is a closed embedding given by $L_{w_Q}$
 \[X(Q) \to \prod_{i=1}^\rho \PP^{A_i-1}.\]
 Since the natural map
 \[\bigotimes_{i=1}^\rho \Gamma(L_i) \to \Gamma(L_{w_Q})\]
 is surjective, the two maps
 \[X(Q) \to \prod_{i=1}^\rho \PP^{A_i-1},\]
are the same. 

We can use the zero sections to extend this to a closed embedding
\[X(Q) \to \prod_{i=1}^\rho \PP^{\binom{\tilde{s}_i}{k_i}-1}.\]
By construction, the morphism of graded rings is as described in the proposition. 
\end{proof}

There is a natural identification between the sections of the $L_i$ and initial terms of the monomials of matrix $A_i$ (see the construction  of the $A_i$ in \S\ref{sec:coords}). We illustrate this for the quiver flag variety in Example \ref{eg:Yshaped2}. The matrices which give the minors which define the embedding of the quiver flag variety in $\prod_{i=1}^\rho \mathbb{P}^{{\tilde{s}_i \choose k_i} -1}$ are
\[ A_1=\begin{bmatrix}
x^{(1)}_{11} & x^{(1)}_{12} & x^{(1)}_{13} & x^{(1)}_{14} & x^{(1)}_{15} & x^{(1)}_{16} \\
x^{(1)}_{21} & x^{(1)}_{22} & x^{(1)}_{23} & x^{(1)}_{24} & x^{(1)}_{25} & x^{(1)}_{26}\\
x^{(1)}_{31} & x^{(1)}_{32} & x^{(1)}_{33} & x^{(1)}_{34} & x^{(1)}_{35} & x^{(1)}_{36} \\
\end{bmatrix},\]
\[A_2=\begin{bmatrix}
x^{(1)}_{11} & x^{(1)}_{12} & x^{(1)}_{13} & x^{(1)}_{14} & x^{(1)}_{15} & x^{(1)}_{16} \\
x^{(1)}_{21} & x^{(1)}_{22} & x^{(1)}_{23} & x^{(1)}_{24} & x^{(1)}_{25} & x^{(1)}_{26}\\
x^{(1)}_{31} & x^{(1)}_{32} & x^{(1)}_{33} & x^{(1)}_{34} & x^{(1)}_{35} & x^{(1)}_{36} \\
x^{(2)}_{11} & x^{(2)}_{12} & x^{(2)}_{13} & x^{(2)}_{14} & x^{(2)}_{15} &x^{(2)}_{16} \\
x^{(2)}_{21} & x^{(2)}_{22} & x^{(2)}_{23} & x^{(2)}_{24} & x^{(2)}_{25} &x^{(2)}_{26} \\
\end{bmatrix}, \]
\[A_3=\begin{bmatrix}
x^{(3)}_{11} & x^{(3)}_{12} & x^{(3)}_{13} & x^{(3)}_{14} & x^{(3)}_{15} & x^{(3)}_{16} & x^{(3)}_{17} & x^{(3)}_{18}\\
 0 & 0& x^{(1)}_{11} & x^{(1)}_{12} & x^{(1)}_{13} & x^{(1)}_{14} & x^{(1)}_{15} & x^{(1)}_{16} \\
0&0&x^{(1)}_{21} & x^{(1)}_{22} & x^{(1)}_{23} & x^{(1)}_{24} & x^{(1)}_{25} & x^{(1)}_{26}\\
0&0& x^{(1)}_{31} & x^{(1)}_{32} & x^{(1)}_{33} & x^{(1)}_{34} & x^{(1)}_{35} & x^{(1)}_{36} \\
\end{bmatrix},\]
\[A_4=\begin{bmatrix}

x^{(4)}_{11} & x^{(4)}_{12} & x^{(4)}_{13} & x^{(4)}_{14} & x^{(4)}_{15} & x^{(4)}_{16} & x^{(4)}_{17} &x^{(4)}_{18}&x^{(4)}_{19}\\
x^{(4)}_{21} & x^{(4)}_{22} & x^{(4)}_{23} & x^{(4)}_{24} & x^{(4)}_{25} & x^{(4)}_{26} & x^{(4)}_{27} &x^{(4)}_{28}&x^{(4)}_{29}\\
x^{(4)}_{31} & x^{(4)}_{32} & x^{(4)}_{33} & x^{(4)}_{34} & x^{(4)}_{35} & x^{(4)}_{36} & x^{(4)}_{37} &x^{(4)}_{38}&x^{(4)}_{39}\\
x^{(4)}_{41} & x^{(4)}_{42} & x^{(4)}_{43} & x^{(4)}_{44} & x^{(4)}_{45} & x^{(4)}_{46} & x^{(4)}_{47} &x^{(4)}_{48}&x^{(4)}_{49}\\
0&x^{(3)}_{11} & x^{(3)}_{12} & x^{(3)}_{13} & x^{(3)}_{14} & x^{(3)}_{15} & x^{(3)}_{16} & x^{(3)}_{17} & x^{(3)}_{18}\\
 0&0 & 0& x^{(1)}_{11} & x^{(1)}_{12} & x^{(1)}_{13} & x^{(1)}_{14} & x^{(1)}_{15} & x^{(1)}_{16} \\
0&0&0&x^{(1)}_{21} & x^{(1)}_{22} & x^{(1)}_{23} & x^{(1)}_{24} & x^{(1)}_{25} & x^{(1)}_{26}\\
0&0&0& x^{(1)}_{31} & x^{(1)}_{32} & x^{(1)}_{33} & x^{(1)}_{34} & x^{(1)}_{35} & x^{(1)}_{36} \\
\end{bmatrix}\]

The identification is given by labelling the ladder diagram of the quiver as follows:
\[\begin{tikzpicture}[scale=1]
\draw (0,0) rectangle (1,1);
\draw (1,1) rectangle (2,2);
\draw (0,1) rectangle (1,2);
\draw (1,0) rectangle (2,1);
\draw (2,1) rectangle (3,2);
\draw (2,0) rectangle (3,1);
\draw (3,0) rectangle (4,1);
\draw (4,0) rectangle (5,1);
\draw (5,0) rectangle (6,1);
\draw (6,0) rectangle (7,1);
\draw (0,2) rectangle (1,3);
\draw (1,2) rectangle (2,3);
\draw (2,2) rectangle (3,3);
\draw (-1,0) rectangle (0,1);
\draw (-1,1) rectangle (0,2);
\draw (-1,2) rectangle (0,3);
\draw (-2,2) rectangle (-1,3);
\draw (-3,2) rectangle (-2,3);

\draw (2,3) rectangle (3,4);
\draw[fill,blue] (-1,0) circle (3pt);
\draw[fill] (1,1) circle (3pt);
\draw[fill] (2,1) circle (3pt);
\draw[fill] (3,1) circle (3pt);
\draw[fill] (7,1) circle (3pt);
\draw[fill] (1,2) circle (3pt);
\draw[fill] (2,2) circle (3pt);
\draw[fill,green] (2,3) circle (3pt);
\draw[fill] (3,4) circle (3pt);
\draw[fill] (0,1) circle (3pt);
\draw[fill] (-1,2) circle (3pt);
\draw[fill] (-3,2) circle (3pt);
\draw[fill] (0,2) circle (3pt);
\node[] at (1.5,3.2) {\tiny $x^{(1)}_{11}$};
\node[] at (1.5,2.2) {\tiny $x^{(1)}_{12}$};
\node[] at (1.5,1.2) {\tiny $x^{(1)}_{13}$};
\node[] at (1.5,0.2) {\tiny $x^{(1)}_{14}$};
\node[] at (0.5,3.2) {\tiny $x^{(1)}_{22}$};
\node[] at (0.5,2.2) {\tiny $x^{(1)}_{23}$};
\node[] at (0.5,1.2) {\tiny $x^{(1)}_{24}$};
\node[] at (0.5,0.2) {\tiny $x^{(1)}_{25}$};
\node[] at (-0.5,3.2) {\tiny $x^{(1)}_{33}$};
\node[] at (-0.5,2.2) {\tiny $x^{(1)}_{34}$};
\node[] at (-0.5,1.2) {\tiny $x^{(1)}_{35}$};
\node[] at (-0.5,0.2) {\tiny $x^{(1)}_{36}$};
\node[] at (-1.5,3.2) {\tiny $x^{(2)}_{14}$};
\node[] at (-1.5,2.2) {\tiny $x^{(2)}_{15}$};
\node[] at (-2.5,3.2) {\tiny $x^{(2)}_{25}$};
\node[] at (-2.5,2.2) {\tiny $x^{(2)}_{26}$};
\node[] at (2.5,4.2) {\tiny $x^{(3)}_{11}$};
\node[] at (2.5,3.2) {\tiny $x^{(3)}_{12}$};
\node[] at (2.5,2.2) {\tiny $x^{(3)}_{13}$};
\node[] at (2.5,1.2) {\tiny $x^{(3)}_{14}$};
\node[] at (2.5,0.2) {\tiny $x^{(3)}_{15}$};
\node[] at (6.5,1.2) {\tiny $x^{(4)}_{11}$};
\node[] at (6.5,0.2) {\tiny $x^{(4)}_{12}$};
\node[] at (5.5,1.2) {\tiny $x^{(4)}_{22}$};
\node[] at (5.5,0.2) {\tiny $x^{(4)}_{23}$};
\node[] at (4.5,1.2) {\tiny $x^{(4)}_{33}$};
\node[] at (4.5,0.2) {\tiny $x^{(4)}_{34}$};
\node[] at (3.5,1.2) {\tiny $x^{(4)}_{44}$};
\node[] at (3.5,0.2) {\tiny $x^{(4)}_{45}$};
\end{tikzpicture}.\]
For example, sections of $L_3$ correspond to paths from $(0,0)$ (the blue vertex) to $(4,4)$. For each such path, we identify it with the monomial which is the product of all the variables in the path. So, for example, the path marked in red below corresponds to $x^{(3)}_{13} x^{(1)}_{12} x^{(1)}_{23} x^{(1)}_{34}$, which is the initial term of the minor of $A_3$ given by the choice of columns $3,4,5,6$. 
\[\begin{tikzpicture}[scale=1]
\draw (0,0) rectangle (1,1);
\draw (1,1) rectangle (2,2);
\draw (0,1) rectangle (1,2);
\draw (1,0) rectangle (2,1);
\draw (2,1) rectangle (3,2);
\draw (2,0) rectangle (3,1);
\draw (3,0) rectangle (4,1);
\draw (4,0) rectangle (5,1);
\draw (5,0) rectangle (6,1);
\draw (6,0) rectangle (7,1);
\draw (0,2) rectangle (1,3);
\draw (1,2) rectangle (2,3);
\draw (2,2) rectangle (3,3);
\draw (-1,0) rectangle (0,1);
\draw (-1,1) rectangle (0,2);
\draw (-1,2) rectangle (0,3);
\draw (-2,2) rectangle (-1,3);
\draw (-3,2) rectangle (-2,3);

\draw (2,3) rectangle (3,4);
\draw[fill,blue] (-1,0) circle (3pt);
\draw[fill] (1,1) circle (3pt);
\draw[fill] (2,1) circle (3pt);
\draw[fill] (3,1) circle (3pt);
\draw[fill] (7,1) circle (3pt);
\draw[fill] (1,2) circle (3pt);
\draw[fill] (2,2) circle (3pt);
\draw[fill,green] (2,3) circle (3pt);
\draw[fill] (3,4) circle (3pt);
\draw[fill] (0,1) circle (3pt);
\draw[fill] (-1,2) circle (3pt);
\draw[fill] (-3,2) circle (3pt);
\draw[fill] (0,2) circle (3pt);
\draw[red, line width = 1mm] (-1,0) -- (-1,2) -- (3,2) -- (3,4);
\end{tikzpicture}.\]
\begin{prop} Let $p$ be a path between $O_i$ and $v_j$ for some external vertex $v_j, j \in S_i$. Let $J\subset \{1,\dots,\tilde{s}_i\}$ be the subset corresponding to this path. Then the leading term of the minor $p^i_J$ of $A_i$ is the product of the labels on the path $p$. 
\end{prop}
\begin{proof} Fix $j$ and $J$, and assume that $j \in S_1$ -- a very similar argument gives the claim when $j \in S_2$. The columns from right to left of the $r_i \times k_i$ block of the grid corresponding to the $i^{th}$ step of the quiver are labeled by entries of the $k_i$ rows of $A_i$. The $j^{th}$ column starts at the top with the $j^{th}$ entry of the row, so the $k^{th}$ label going down is $[A_i]_{j,j+k-1}$. If the path $p$ contains the $k^{th}$ horizontal arrow in the $j^{th}$ column, then $j+k-1 \in J$. Since the leading term of the minor $p^i_J$ of $A_i$ is the diagonal term, this shows the claim. 
\end{proof}
We can identify a partial order on paths between $O$ and $v_j$ for all $j$, given by $p_1 \leq p_2$ if $p_1$ is always below and to the right of $p_2$. Then this partial order is precisely the partial order given on the variables of $\CC[p]$. In fact, the paths have the structure of a distributive lattice. 

Recall the map 
\[\phi: \CC[p] \to \Pl(Q,\br), \hspace{3mm} p^i_J \mapsto m_J(A_i).\]
Define \[\overline{\phi}:\CC[p] \to \Pl(Q,\br), \hspace{3mm} p^i_J \mapsto lt(m_J(A_i))\]
where $lt(m_J(A_i))$ is the leading term of the minor $m_J(A_i)$.
\begin{mydef} Let $Y(Q)$ denote the toric variety given by the SAGBI degeneration of a $Y$-shaped quiver flag variety $Q$. That is, $Y(Q)$ is the closure of the morphism 
\[(\CC^*)^{\dim(M(Q,\br))} \to \prod_{i=1}^\rho \PP^{\binom{\tilde{s}_i}{k_i}},\]
defined by $\overline{\phi}$.  
\end{mydef}

 Note that $Y(Q)$ is cut out of $\prod_{i=1}^\rho \PP^{\binom{\tilde{s}_i}{k_i}}$ by $\ker(\overline{\phi}).$

\begin{thm}\label{thm:ladder} Let $Q$ be a $Y$-shaped quiver flag variety.  The toric variety $Y(Q)$ is isomorphic to $X(Q)$.
\end{thm}
\begin{proof}
$X(Q)$ is a subvariety of  $\prod_{i=1}^\rho \PP^{\binom{\tilde{s}_i}{k_i}}$ by Proposition \ref{prop:ce}. The equations defining $X(Q)$ are given by the kernel of the map $\psi$ defined in the proposition. It therefore suffices to show that $\ker(\overline{\phi})=\ker(\psi).$ Both of these ideals are generated by binomials, as both $X(Q)$ and $Y(Q)$ are toric varieties. A monomial in such a binomial is a product of monomials arising from paths in the ladder quiver.  In either case, a binomial $m_1-m_2$ lies in the kernel  if and only if the union of the supports of the paths in $m_1$ is the same as that for $m_2$. So  $\ker(\overline{\phi})=\ker(\psi).$
\end{proof}
\begin{rem} When applying this theorem to flag varieties, we obtain that in particular, that the degenerate toric variety described by \cite{GL} is the toric quiver moduli space $X(n,r_1,\dots,r_\rho)$. 
\end{rem}

\section{An extended Przyjalkowski method}\label{sec:przy}
The goal of the remainder of the paper is to exploit the toric degeneration produced above for $Y$-shaped quiver flag varieties to find Laurent polynomial mirrors. Given a quiver flag zero locus $Z$ in a Fano $Y$-shaped quiver flag variety $X$, we can look at what happens to $Z$ under the toric degeneration of $X$. If $Z$ degenerates to a complete intersection in $X(Q)$, we can apply the Przyjalkowski method to find a Laurent polynomial mirror of the complete intersection, which is a candidate mirror to $Z$.  We'll actually need a slight generalization of the Przyjalkowski method, which we describe here. 

The original Przyjalkowski \cite{laurentinversion} method has limited applications to complete intersections in flag varieties and Grassmannians, as there often is not a convex partition for the required line bundles. The generalization allows one produce a Laurent polynomial for all Fano complete intersections on flag varieties. In this section, we will use the notation introduced in \S \ref{subsection:GIT} for the GIT data of a toric variety.

\begin{mydef}\label{defn:convexpartition}
Let $\mathbb{L}$ be the co-character lattice of a torus $K$, and $D_1,\dots,D_m \in \mathbb{L}^\vee, \omega \in \LL^\vee_\RR$ the weight data for a toric stack. Let $L_1,\dots,L_k$ be divisors on $X$ corresponding to characters $L_i \in \LL^\vee$. An \emph{extended convex partition} for $L_1,\dots,L_k$ is the following data:
\begin{enumerate}
\item A subset $B$ of $\{1,\dots,m\}$ such that the $D_i: i \in B$ are a basis for $\LL^\vee$. A partition of $B$ into $B_0$ and $B_1$. 
\item A partition of $\{1,\dots,m\} - B_0$ into $k+1$ subsets $S_0,S_1,\dots,S_{k}$ such that for $i>0$, $L_i=\sum_{j \in S_i} D_j$ and each $L_i$ are in the positive span of $D_j, j \in B_0.$ 
\item A distinguished element $j_i \in S_i, i>0$. If $S_i \cap B_1 \neq S_i$, take $j_i \not \in B_1$. 
\end{enumerate} 
\end{mydef}
Choosing the basis corresponding to $B$, define $M$ to be an $r \times m$ matrix $M=[m_{ij}]$ with the first $r$ columns corresponding to the divisors in $B$ in this basis (in other words, the $r \times r$ identity matrix), and remaining columns corresponding to the remaining divisors $D_i$ written in the basis. 

Introduce variables $x_i$ for each $i \in \{1,\dots,m\}$. For each $S_i,i>0$ of size $n_i$, introduce $n_i$ new variables $y_{il}, l \in S_i$, but set $y_{i j_i}=1$. Then we impose the following equations: for $j \in S_i$, 
\[x_j=\frac{y_{ij}}{\sum_{k \in S_i} y_{ik}},\]
and
\[\prod_{j=1}^m x_{i}^{m_{ij}}=1.\]
We now use these equations to write the polynomial
$$W=\sum_{i=1}^m x_i$$
as a rational function in the variables $x_j, j \in S_0$ and $y_{il}$, such that $l \not \in B_0$, $l \neq i_j$. Call these the un-eliminated variables. Finally, delete the constant term.
\begin{lem} The resulting rational function is a Laurent polynomial in the un-eliminated variables. 
\end{lem}
\begin{proof}
The $r$ equations
\[\prod_{j=1}^m x_{i}^{m_{ij}}=1\]
each contain exactly one of the $r$ $x_i$ where $D_i \in B$. We therefore first solve for these $x_i$, expressing them as a Laurent monomial in the remaining $x_j$. Now, use 
\[x_j=\frac{y_{ij}}{\sum_{k \in S_i} y_{ik}},\]
to re-write these $r$ equations.

We first consider the equation corresponding to some $x_i, D_i \in B_0$. We obtain an equation for $x_i$, which we claim is a Laurent polynomial in the $y_{il}$, $i=1,\dots,k$, $l \neq j_i, l \not \in B_1$. This is because the factors of the form $\sum_{k \in S_t} y_{tk}$ all appear in the numerator; the degree of the factor $\sum_{k \in S_t} y_{tk}$ is the coefficient of $D_i$ in the expression $L_t:=\sum_{j \in B_0} c_j D_j,$ which is non-negative by the convexity assumption. 

 For $x_j, j \in B_1$, at first glance it seems like the situation is more complicated, as we also substitute $x_j$ as a rational function in the $y_{kl}$. However, as in the previous case, the degree of the factor $\sum_{k \in S_t} y_{tk}$ is the coefficient of $D_i$ in the expression $L_t:=\sum_{j \in B_0} c_j D_j,$, which is $0$ since $i \in B_1$. Therefore we obtain an expression for $y_{ij}$ as a monomial in the un-eliminated variables. We can substitute it into the equations for $x_i, i \in B_0$ without problem. 

Substituting the result into $W$, we get a Laurent polynomial as claimed. 
\end{proof}
\begin{eg} We present an example of a toric complete intersection where the extended Pryzjalkowski method allows us to produce a Laurent polynomial mirror, where the original one does not. The ambient toric variety is the Gelfand--Cetlin toric degeneration of $\Gr(6,3)$, represented by the ladder diagram: 
\[\begin{tikzpicture}[scale=0.6]
\draw (0,0) rectangle (1,1);
\draw (1,0) rectangle (2,1);
\draw (2,0) rectangle (3,1);
\draw (0,1) rectangle (1,2);
\draw (1,1) rectangle (2,2);
\draw (2,1) rectangle (3,2);
\draw (0,2) rectangle (1,3);
\draw (1,2) rectangle (2,3);
\draw (2,2) rectangle (3,3);

\draw[fill] (0,0) circle (3pt);
\draw[fill] (1,1) circle (3pt);
\draw[fill] (2,1) circle (3pt);
\draw[fill] (1,2) circle (3pt);
\draw[fill] (2,2) circle (3pt);
\draw[fill] (3,3) circle (3pt);
\draw[blue, line width = 1mm] (1,1) -- (1,2);
\draw[blue, line width = 1mm] (0,0) --(0,1) -- (1,1);
\draw[green, line width = 1mm] (1,1) -- (2,1);
\draw[blue, line width = 1mm] (1,2) -- (2,2);
\draw[blue, line width = 1mm] (2,2) --(2,3) -- (3,3);

\end{tikzpicture}.\]
The union of the blue and green arrows are the choice of basis $B$, with the green arrow the only element in $B_1$ and the blue arrows the set $B_0$. We label the arrows in $B_0$, $a^0_1, a^0_2, a^0_3,a^0_4$, ordered from bottom to top (i.e. in their appearance in the path). The corresponding variables we call
\[x^0_1,x^0_2,x^0_3,x^0_4.\]
For line bundles, take $L_1^{\oplus 5}$. 

Any choice of path from the bottom right to top left gives a representation of the associated divisor as a sum of arrow divisors. We choose the paths:

\[\begin{tikzpicture}[scale=0.6]
\draw (0,0) rectangle (1,1);
\draw (1,0) rectangle (2,1);
\draw (2,0) rectangle (3,1);
\draw (0,1) rectangle (1,2);
\draw (1,1) rectangle (2,2);
\draw (2,1) rectangle (3,2);
\draw (0,2) rectangle (1,3);
\draw (1,2) rectangle (2,3);
\draw (2,2) rectangle (3,3);

\draw[fill] (0,0) circle (3pt);
\draw[fill] (1,1) circle (3pt);
\draw[fill] (2,1) circle (3pt);
\draw[fill] (1,2) circle (3pt);
\draw[fill] (2,2) circle (3pt);
\draw[fill] (3,3) circle (3pt);
\draw[red, line width = 1mm] (0,0) -- (0,3)-- (3,3);
\end{tikzpicture},
\hspace{5mm}
\begin{tikzpicture}[scale=0.6]
\draw (0,0) rectangle (1,1);
\draw (1,0) rectangle (2,1);
\draw (2,0) rectangle (3,1);
\draw (0,1) rectangle (1,2);
\draw (1,1) rectangle (2,2);
\draw (2,1) rectangle (3,2);
\draw (0,2) rectangle (1,3);
\draw (1,2) rectangle (2,3);
\draw (2,2) rectangle (3,3);
\draw[fill] (0,0) circle (3pt);
\draw[fill] (1,1) circle (3pt);
\draw[fill] (2,1) circle (3pt);
\draw[fill] (1,2) circle (3pt);
\draw[fill] (2,2) circle (3pt);
\draw[fill] (3,3) circle (3pt);
\draw[yellow, line width = 1mm] (0,0) --(0,2) -- (1,2) -- (1,3) --(3,3);
\end{tikzpicture},
\hspace{5mm}
\begin{tikzpicture}[scale=0.6]
\draw (0,0) rectangle (1,1);
\draw (1,0) rectangle (2,1);
\draw (2,0) rectangle (3,1);
\draw (0,1) rectangle (1,2);
\draw (1,1) rectangle (2,2);
\draw (2,1) rectangle (3,2);
\draw (0,2) rectangle (1,3);
\draw (1,2) rectangle (2,3);
\draw (2,2) rectangle (3,3);

\draw[fill] (0,0) circle (3pt);
\draw[fill] (1,1) circle (3pt);
\draw[fill] (2,1) circle (3pt);
\draw[fill] (1,2) circle (3pt);
\draw[fill] (2,2) circle (3pt);
\draw[fill] (3,3) circle (3pt);

\draw[green, line width = 1mm] (0,0) -- (1,0) -- (1,1) -- (2,1) --(2,2)--(3,2)--(3,3);

\end{tikzpicture},
\hspace{5mm}
\begin{tikzpicture}[scale=0.6]
\draw (0,0) rectangle (1,1);
\draw (1,0) rectangle (2,1);
\draw (2,0) rectangle (3,1);
\draw (0,1) rectangle (1,2);
\draw (1,1) rectangle (2,2);
\draw (2,1) rectangle (3,2);
\draw (0,2) rectangle (1,3);
\draw (1,2) rectangle (2,3);
\draw (2,2) rectangle (3,3);

\draw[fill] (0,0) circle (3pt);
\draw[fill] (1,1) circle (3pt);
\draw[fill] (2,1) circle (3pt);
\draw[fill] (1,2) circle (3pt);
\draw[fill] (2,2) circle (3pt);
\draw[fill] (3,3) circle (3pt);

\draw[orange, line width = 1mm] (0,0) -- (2,0) -- (2,1) -- (3,1) -- (3,3);

\end{tikzpicture},
\hspace{5mm}
\begin{tikzpicture}[scale=0.6]
\draw (0,0) rectangle (1,1);
\draw (1,0) rectangle (2,1);
\draw (2,0) rectangle (3,1);
\draw (0,1) rectangle (1,2);
\draw (1,1) rectangle (2,2);
\draw (2,1) rectangle (3,2);
\draw (0,2) rectangle (1,3);
\draw (1,2) rectangle (2,3);
\draw (2,2) rectangle (3,3);

\draw[fill] (0,0) circle (3pt);
\draw[fill] (1,1) circle (3pt);
\draw[fill] (2,1) circle (3pt);
\draw[fill] (1,2) circle (3pt);
\draw[fill] (2,2) circle (3pt);
\draw[fill] (3,3) circle (3pt);
\draw[pink, line width = 1mm] (0,0) --(3,0) -- (3,3);

\end{tikzpicture}.
\]
We label the arrows appearing the $i^{th}$ path above as $a^i_j$, where the arrows are ordered as in the order in the path. Let $n_i$ be the length of the $i^{th}$ path (so that the paths are of lengths $1,2,4,2,1$ respectively).  The set $S_i$ is the set of divisors $\{D_{a^i_1},\dots, D_{a^i_{n_i}}\}$. For each $a^i_j$, we introduce variables $x^i_j$ and $y^i_j$. 
Note that as required, the set of arrows in each path are disjoint from each other and from $B_0$. The green path contains the arrow in $B_1$, which is permitted by the extended Przyjalkowski method (as opposed to the original version). The condition of convexity is easy to see from the diagram. 

We set $W=\sum_{a} x_a$. The Przyjalkowski method gives the following equations:
\[y^i_1=1,\]
\[x^i_j=\frac{y^i_j}{\sum_{j=1}^{n_i} y^i_j},\]
and
\[x^0_1=\frac{1}{x^1_1 x^2_1 x^3_1 x^4_1 x^5_1},\hspace{2mm} x^0_2=\frac{1}{x^1_1 x^2_1 x^3_3 x^4_2 x^5_1}, \hspace{2mm}
x^0_3=\frac{1}{x^1_1 x^2_2 x^3_3 x^4_2 x^5_1}, \]
 \[x^0_4=\frac{1}{x^1_1 x^2_2  x^3_4 x^4_2 x^5_1}, \hspace{2mm} x^3_2=\frac{x^4_2 x^3_3}{x^4_1}.\]
We immediately see that $x^1_1=x^5_1=y^1_1=y^5_1=1.$

Substituting the first set of equations into the second set of equations above gives:
\[x^0_1=\frac{(1+y^2_2)(1+y^3_2+y^3_3+y^3_4)(1+y^4_2)}{1},\hspace{2mm} x^0_2=\frac{(1+y^2_2)(1+y^3_2+y^3_3+y^3_4)(1+y^4_2)}{y^3_3 y^4_2},\]
\[x^0_3=\frac{(1+y^2_2)(1+y^3_2+y^3_3+y^3_4)(1+y^4_2)}{y^2_2 y^3_3 y^4_2}, \hspace{2mm}x^0_4=\frac{(1+y^2_2)(1+y^3_2+y^3_3+y^3_4)(1+y^4_2)}{y^2_2 y^3_4 y^4_2}, \]
\[ \frac{y^3_2}{1+y^3_2+y^3_3+y^3_4}=\frac{y^3_3 y^4_2}{1+y^3_2+y^3_3+y^3_4}.\]
Simplifying the last equation, we see that
\[y^3_2=y^3_3 y^4_2.\]
We can thus eliminate $x^0_1,x^0_2,x^0_3,x^0_4$ and $y^3_2$. The Laurent polynomial produced from the Przyjalkowski method is:
\[(1+y^2_2)(1+y^3_3 y^4_2+y^3_3+y^3_4)(1+y^4_2)(1+\frac{1}{y^3_3 y^4_2}+\frac{1}{y^2_2 y^3_3 y^4_2}+\frac{1}{y^2_2 y^3_4 y^4_2})-6.\]
This is a Laurent polynomial in 4 variables, which is the dimension of the toric complete intersection.  

\end{eg}

The first 20 terms of the period sequence of the Laurent polynomial and the quantum period of the four dimensional Fano variety cut out of $\Gr(6,3)$ by a generic section of $\mathcal{O}(1)^{\oplus 5}$ agree. This is expected because under the Gelfand--Cetlin toric degeneration, this Fano subvariety degenerates to the complete intersection on the singular toric variety cut out by $L_1^{\oplus 5}$.   

\begin{rem} Suppose $Z \subset Y$ is a Fano toric complete intersection in a toric variety, and $f$ is a Laurent polynomial mirror produced by the Przyjalkowski method for $Z$. Then Doran--Harder \cite{doranharder} show that the Przyjalkowski method gives the data of a toric degeneration of the toric complete intersection; the spanning fan of the Newton polytope of the mirror Laurent polynomial is the fan of the toric degeneration.  
\end{rem}

\section{Mirrors of quiver flag zero loci}\label{sec:mirrors}
Let $Z \subset Y$ be a Fano subvariety in a Fano variety $Y$. Suppose there is a toric degeneration of $Y$ to a singular toric variety $X$ such that $Z$ degenerates to a complete intersection in $X$. If this complete intersection has a convex partition, then one can apply the extended Przyjalkowski method to produce a candidate Laurent polynomial mirror to $Z$.  This approach has been used to find mirrors to complete intersections in flag varieties \cite{flagdegenerations}. The degeneration of $Y$-shaped ladder quivers introduced in this paper allows this approach to be extended to quiver flag zero loci in $Y$-shaped quivers, provided the vector bundles split into line bundles. That is, if $Z$ is a quiver flag zero locus in a $Y$-shaped quiver flag variety $M_\theta(Q,\br)$ cut out by line bundles (that is, a quiver flag complete intersection) then $Z$ degenerates to a complete intersection in $X(Q)$.
However, if $Z$ is a quiver flag locus cut out by sections of higher rank vector bundles $E_G$, the situation is more complicated. In this section, we give the first (to the author's knowledge) suggestion of how to find Laurent polynomial mirrors to these subvarieties, both in the flag and quiver flag cases. The difficulty is that it is unclear how to find the image of $Z$ under the degeneration, as it is not cut out by Pl\"ucker coordinates. Instead of taking this approach, we focus on the representations of $G$ that gives the vector bundles $E_G$.  

\subsection{The construction}
Let $L(Q)$ be a ladder diagram for some Fano $Y$-shaped quiver $Q$. If $X$ is a complete intersection in a $Y$-shaped quiver flag variety, then it is cut out by sections of line bundles given by tensor powers of the $\det(W_i)$. Using the identification of $\det(S)^*=\det(F)$ on the Grassmannian (here $S$ is the tautological sub-bundle and $F$ the tautological quotient bundle), we can see that the $\det(W_i)$ are the pullback of the tautological line bundles via the map
\[M_Q \to P^\vee=\prod_{i=1}^\rho \Gr^{sub}(k_i,\tilde{s}_i)\to \prod_{i=1}^\rho \PP^{{\tilde{s}_i \choose k_i}-1}.\]
As the degeneration of $M_Q$ is within this product of projective spaces, we see that such a complete intersection degenerates to a complete intersection in $X(Q)$.  The pullback of the $j^{th}$ tautological line bundle is $L_i$ on $X(Q)$.

The situation for quiver flag zero loci which are not complete intersection is less straightforward, essentially because the sections of $S^\alpha W_i$ cannot generally be expressed with Pl\"ucker coordinates. Thus there is no natural way to degenerate a quiver flag zero locus.  Instead we find direct sums of rank one reflexive sheaves whose tensor product is $L_i$ and which correspond to the same characters as appear in the Schur representation $S^\alpha W_i.$

Recall the GIT construction of a quiver flag variety $M_\theta(Q,\br)$ as $V/\!/G$, where $G=\prod_{i=1}^\rho \GL(r_i)$. The ladder quiver $L(Q)$ is also a GIT quotient:
\[ \CC^{|L(Q)_1|}/K, \hspace{5mm} K \cong (\CC^*)^{|L(Q)_0|-1}.\]
The group $G$ does not act on $\CC^{|L(Q)_1|}$. However, for each $i$, we can define an action of the diagonal torus of $\GL(r_i)$ on this vector space. 

We first explain this in the simplest example, that of a Grassmannian. The ladder quiver of $\Gr(n,r)$ is an $r \times (n-r)$ grid. An arrow $a$ in the ladder quiver moves from a vertex $(i_1,j_1)$ to a vertex $(i_2,j_2)$, where $i_1 \leq i_2$ and $j_1 \leq j_2 \leq r.$ If $t=(t_1,\dots,t_r) \in T_{ab}$,  we define the action of $t$ on the coordinate of a vector in  $\CC^{|L(Q)_1|}$ corresponding to the arrow $a$ to be
\[\prod_{j=j_1+1}^{j_2} t_j.\]
\begin{eg} Consider the Grassmannian $\Gr(5,2)$ with ladder quiver 
\[\begin{tikzpicture}[scale=0.6]
\draw (0,0) rectangle (1,1);
\draw (1,1) rectangle (2,2);
\draw (0,1) rectangle (1,2);
\draw (1,0) rectangle (2,1);
\draw (2,1) rectangle (3,2);
\draw (2,0) rectangle (3,1);
\draw[fill] (0,0) circle (3pt);
\draw[pink, line width = 1mm] (0,0) --(1,0) -- (1,1);
\draw[green, line width = 1mm] (2,1) -- (1,1);
\draw[blue, line width = 1mm] (0,0) -- (0,2)--(3,2);

\end{tikzpicture}.\]
Using the identification $\chi((\CC^*)^2) = \ZZ^2$, the character of the $(\CC^*)^2$ on the coordinate corresponding to the pink arrow is $(1,0)$, for the green arrow $(0,0)$, and for the blue arrow $(1,1)$. 
\end{eg}

For a $Y$-shaped quiver flag variety, the ladder diagram has an $r_i \times (s_i-r_i)$ block for each vertex $i=1,\dots,\rho$. Using this breakdown, we can define an action on $(\CC^*)^{r_i}$ on $\CC^{|L(Q)|_1}$: it acts as in the Grassmannian case on the $r_i \times (s_i-r_i)$ block, and trivially elsewhere. Given a set of arrows $S=\{a_1,\dots,a_s\}$, we define the character of $(\CC^*)^{r_i}$ associated to this set to be the sum of the characters corresponding to each $a_j$. 

\begin{mydef} Let $E_G$ be a rank $k$ representation theoretic vector bundle on a $Y$-shaped quiver flag variety $M_\theta(Q,\br)$, derived from the $G$-representation $E$.  For each $i$, the induced representation of $(\CC^*)^i$ on $E$ splits, say into characters $\beta^i_1,\dots, \beta^i_k.$

 Let $S_1,\dots, S_k$ be $k$ sets of arrows in $L(Q)$. 

We say that $\oplus_{j=1}^k \mathcal{O}(\sum_{a \in S_j} D_a)$ \emph{represents $E_G$} on $X(Q)$ if for each $i$, the characters associated to the collections $S_1,\dots,S_k$ are $\beta^i_1,\dots, \beta^i_k.$
\end{mydef}

 For example, taking $S_1,S_2, S_3$ to be the set of red, green, and yellow arrows respectively. This gives a sheaf that represents $W_1$ on $X(6,3)$: 
\[\begin{tikzpicture}[scale=0.6]
\draw (0,0) rectangle (1,1);
\draw (1,0) rectangle (2,1);
\draw (2,0) rectangle (3,1);
\draw (0,1) rectangle (1,2);
\draw (1,1) rectangle (2,2);
\draw (2,1) rectangle (3,2);
\draw (0,2) rectangle (1,3);
\draw (1,2) rectangle (2,3);
\draw (2,2) rectangle (3,3);

\draw[fill] (0,0) circle (3pt);
\draw[fill] (1,1) circle (3pt);
\draw[fill] (2,1) circle (3pt);
\draw[fill] (1,2) circle (3pt);
\draw[fill] (2,2) circle (3pt);
\draw[fill] (3,3) circle (3pt);
\draw[red, line width = 1mm] (0,0) -- (1,0) -- (1,1);
\draw[green, line width = 1mm] (1,1) -- (1,2);
\draw[yellow, line width = 1mm] (1,2) -- (1,3) -- (3,3);

\end{tikzpicture}.\]
There are multiple choices of such sheaves.

In this way, we can often associate to a quiver flag zero locus given by a vector bundle $E_G$ on a Fano $Y$-shaped quiver a subvariety given by a section of a sheaf representing $E_G$ on $X(Q)$. One can then apply the extended Przyjalkowski method to find a Laurent polynomial mirror. In many -- but not all -- cases, this Laurent polynomial has the same period sequence (up to 20 terms) as the quiver flag zero locus. 

 Consider the following example of a quiver flag zero locus (for which a candidate Laurent polynomial mirror previously wasn't known). 
\begin{eg} Consider the quiver flag zero locus on $\Gr(8,6)$ with bundles
 \[\wedge^5 W_1 \oplus \det(W_1) \oplus \det(W_1). \]
The six arrows in the path highlighted below represent $W_1$ on $X(8,6)$:
\[\begin{tikzpicture}[scale=0.6]
\draw (0,0) rectangle (1,1);
\draw (1,0) rectangle (2,1);
\draw (0,1) rectangle (1,2);
\draw (1,1) rectangle (2,2);
\draw (0,2) rectangle (1,3);
\draw (1,2) rectangle (2,3);
\draw (0,3) rectangle (1,4);
\draw (1,3) rectangle (2,4);
\draw (0,4) rectangle (1,5);
\draw (1,4) rectangle (2,5);
\draw (0,5) rectangle (1,6);
\draw (1,5) rectangle (2,6);

\draw[fill] (0,0) circle (3pt);
\draw[fill] (1,1) circle (3pt);
\draw[fill] (1,2) circle (3pt);
\draw[fill] (1,3) circle (3pt);
\draw[fill] (1,4) circle (3pt);
\draw[fill] (1,5) circle (3pt);
\draw[fill] (2,6) circle (3pt);
\draw[blue, line width = 1mm] (0,0) -- (0,1) -- (1,1) --(1,6)--(2,6);
\end{tikzpicture}.\]
The rank $6$ vector bundle  $\wedge^5 W_1$ is represented by arrows given in the following six diagrams: 
\[\begin{tikzpicture}[scale=0.6]
\draw (0,0) rectangle (1,1);
\draw (1,0) rectangle (2,1);
\draw (0,1) rectangle (1,2);
\draw (1,1) rectangle (2,2);
\draw (0,2) rectangle (1,3);
\draw (1,2) rectangle (2,3);
\draw (0,3) rectangle (1,4);
\draw (1,3) rectangle (2,4);
\draw (0,4) rectangle (1,5);
\draw (1,4) rectangle (2,5);
\draw (0,5) rectangle (1,6);
\draw (1,5) rectangle (2,6);

\draw[fill] (0,0) circle (3pt);
\draw[fill] (1,1) circle (3pt);
\draw[fill] (1,2) circle (3pt);
\draw[fill] (1,3) circle (3pt);
\draw[fill] (1,4) circle (3pt);
\draw[fill] (1,5) circle (3pt);
\draw[fill] (2,6) circle (3pt);
\draw[blue, line width = 1mm] (2,6) -- (2,1) -- (1,1);
\end{tikzpicture}
\hspace{5mm}
\begin{tikzpicture}[scale=0.6]
\draw (0,0) rectangle (1,1);
\draw (1,0) rectangle (2,1);
\draw (0,1) rectangle (1,2);
\draw (1,1) rectangle (2,2);
\draw (0,2) rectangle (1,3);
\draw (1,2) rectangle (2,3);
\draw (0,3) rectangle (1,4);
\draw (1,3) rectangle (2,4);
\draw (0,4) rectangle (1,5);
\draw (1,4) rectangle (2,5);
\draw (0,5) rectangle (1,6);
\draw (1,5) rectangle (2,6);

\draw[fill] (0,0) circle (3pt);
\draw[fill] (1,1) circle (3pt);
\draw[fill] (1,2) circle (3pt);
\draw[fill] (1,3) circle (3pt);
\draw[fill] (1,4) circle (3pt);
\draw[fill] (1,5) circle (3pt);
\draw[fill] (2,6) circle (3pt);
\draw[red, line width = 1mm] (0,0) -- (0,5) -- (1,5);
\end{tikzpicture}
\hspace{5mm}
\begin{tikzpicture}[scale=0.6]
\draw (0,0) rectangle (1,1);
\draw (1,0) rectangle (2,1);
\draw (0,1) rectangle (1,2);
\draw (1,1) rectangle (2,2);
\draw (0,2) rectangle (1,3);
\draw (1,2) rectangle (2,3);
\draw (0,3) rectangle (1,4);
\draw (1,3) rectangle (2,4);
\draw (0,4) rectangle (1,5);
\draw (1,4) rectangle (2,5);
\draw (0,5) rectangle (1,6);
\draw (1,5) rectangle (2,6);

\draw[fill] (0,0) circle (3pt);
\draw[fill] (1,1) circle (3pt);
\draw[fill] (1,2) circle (3pt);
\draw[fill] (1,3) circle (3pt);
\draw[fill] (1,4) circle (3pt);
\draw[fill] (1,5) circle (3pt);
\draw[fill] (2,6) circle (3pt);

\draw[green, line width = 1mm] (2,6) -- (2,2) -- (1,2);
\draw[green, line width = 1mm] (0,0) -- (0,1) -- (1,1);

\end{tikzpicture}
\hspace{5mm}
\begin{tikzpicture}[scale=0.6]
\draw (0,0) rectangle (1,1);
\draw (1,0) rectangle (2,1);
\draw (0,1) rectangle (1,2);
\draw (1,1) rectangle (2,2);
\draw (0,2) rectangle (1,3);
\draw (1,2) rectangle (2,3);
\draw (0,3) rectangle (1,4);
\draw (1,3) rectangle (2,4);
\draw (0,4) rectangle (1,5);
\draw (1,4) rectangle (2,5);
\draw (0,5) rectangle (1,6);
\draw (1,5) rectangle (2,6);

\draw[fill] (0,0) circle (3pt);
\draw[fill] (1,1) circle (3pt);
\draw[fill] (1,2) circle (3pt);
\draw[fill] (1,3) circle (3pt);
\draw[fill] (1,4) circle (3pt);
\draw[fill] (1,5) circle (3pt);
\draw[fill] (2,6) circle (3pt);

\draw[yellow, line width = 1mm] (2,6) -- (2,3) -- (1,3);
\draw[yellow, line width = 1mm] (0,0) -- (0,2) -- (1,2);

\end{tikzpicture}
\hspace{5mm}
\begin{tikzpicture}[scale=0.6]
\draw (0,0) rectangle (1,1);
\draw (1,0) rectangle (2,1);
\draw (0,1) rectangle (1,2);
\draw (1,1) rectangle (2,2);
\draw (0,2) rectangle (1,3);
\draw (1,2) rectangle (2,3);
\draw (0,3) rectangle (1,4);
\draw (1,3) rectangle (2,4);
\draw (0,4) rectangle (1,5);
\draw (1,4) rectangle (2,5);
\draw (0,5) rectangle (1,6);
\draw (1,5) rectangle (2,6);

\draw[fill] (0,0) circle (3pt);
\draw[fill] (1,1) circle (3pt);
\draw[fill] (1,2) circle (3pt);
\draw[fill] (1,3) circle (3pt);
\draw[fill] (1,4) circle (3pt);
\draw[fill] (1,5) circle (3pt);
\draw[fill] (2,6) circle (3pt);

\draw[orange, line width = 1mm] (2,6) -- (2,4) -- (1,4);
\draw[orange, line width = 1mm] (0,0) -- (0,3) -- (1,3);

\end{tikzpicture}
\hspace{5mm}
\begin{tikzpicture}[scale=0.6]
\draw (0,0) rectangle (1,1);
\draw (1,0) rectangle (2,1);
\draw (0,1) rectangle (1,2);
\draw (1,1) rectangle (2,2);
\draw (0,2) rectangle (1,3);
\draw (1,2) rectangle (2,3);
\draw (0,3) rectangle (1,4);
\draw (1,3) rectangle (2,4);
\draw (0,4) rectangle (1,5);
\draw (1,4) rectangle (2,5);
\draw (0,5) rectangle (1,6);
\draw (1,5) rectangle (2,6);

\draw[fill] (0,0) circle (3pt);
\draw[fill] (1,1) circle (3pt);
\draw[fill] (1,2) circle (3pt);
\draw[fill] (1,3) circle (3pt);
\draw[fill] (1,4) circle (3pt);
\draw[fill] (1,5) circle (3pt);
\draw[fill] (2,6) circle (3pt);

\draw[pink, line width = 1mm] (2,6) -- (2,5) -- (1,5);
\draw[pink, line width = 1mm] (0,0) -- (0,4) -- (1,4);

\end{tikzpicture}\]
The ladder diagram determines a weight matrix for $X(8,6)$:
\setcounter{MaxMatrixCols}{18}
\[\begin{bmatrix}
1 & 1 &-1 & 0 & 0 & 0 & 0 & 0 & 0 & 0 & 0 & 0 & 0 & 0 &-1 &-1& 0 & 0 \\
0 & 0 & 1 & 1 &-1 & 0 & 0 & 0 & 0 & 0 & 0 & 0 & 0 &-1 & 0 & 0& 0 & 0 \\
0 & 0 & 0 & 0 & 1 & 1 &-1 & 0 & 0 & 0 & 0 & 0 &-1 & 0 & 0 & 0& 0 & 0 \\
0 & 0 & 0 & 0 & 0 & 0 & 1 & 1 &-1 & 0 & 0 &-1 & 0 & 0 & 0 & 0& 0 & 0 \\
0 & 0 & 0 & 0 & 0 & 0 & 0 & 0 & 1 & 1 &-1 & 0 & 0 & 0 & 0 & 0& 0 & 0 \\
0 & 0 & 0 & 0 & 0 & 0 & 0 & 0 & 0 & 0 & 1 & 1 & 1 & 1 & 1 & 1& 1 & 1\\
\end{bmatrix}.\]
The weights for the line bundles (or rather divisors) are given by
\[\begin{bmatrix} 
0 & 0 &-1 & 0 & 1 & 0 & 0 & 0\\
0 & 0 & 0 & 0 &-1 & 1 & 0 & 0\\
0 & 0 & 0 & 0 & 0 &-1 & 1 & 0\\
0 & 0 & 0 & 0 & 0 & 0 &-1 & 1\\
0 & 0 & 0 & 1 & 0 & 0 & 0 &-1\\
1 & 1 & 1 & 0 & 1 & 1 & 1 & 1\\
\end{bmatrix}.\]
The ladder diagrams give a way of picking a convex partition for these bundles. One can formally follow the Przyjalkowski method (see \cite{laurentinversion}) to produce a Laurent polynomial. This Laurent polynomial is \emph{not} rigid maximally mutable, but it also does not have the correct period sequence. There is a unique rigid maximally mutable Laurent polynomial on the Newton polytope of this polynomial. It has the correct classical period, up to 10 terms. The Laurent polynomial is
\[\begin{aligned}&x y z w + x y z + x y w + 2 x y + x y/w + x z w + x z \\&+ x w + 2 x + x/w + x/z +
x/(z w) + y z w + y z + y w \\&+ 2 y + y/w + z w + z + w + 2/w + 2/z + 2/(z w) +
1/y \\& + 1/(y w) + 1/(y z) + 1/(y z w) + 1/x + 1/(x w) + 1/(x z) + 1/(x z w) +
1/(x y) \\&+ 1/(x y w) + 1/(x y z) + 1/(x y z w).\end{aligned}\]

\end{eg}

\subsection{More examples}
Below, we give some more examples of this method applied to various Fano fourfolds. Some examples give candidate Laurent polynomial mirrors in situations slightly beyond the scope of the methods discussed above, that might suggest generalizations of the method. 
\begin{eg}
Consider the quiver flag zero locus given by the quiver
\begin{center}
\includegraphics[scale=0.5]{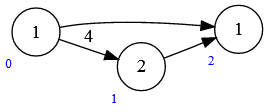}
\end{center}
with bundles $W_1 \otimes W_2$. This Fano variety has PID 115 (see the tables in the appendicies of \cite{kalashnikov}). The paths on the ladder diagram which give the divisors suggested by the above method is
\[\begin{tikzpicture}[scale=0.6]
\draw (0,0) rectangle (1,1);
\draw (1,1) rectangle (2,2);
\draw (0,1) rectangle (1,2);
\draw (1,0) rectangle (2,1);
\draw (2,0) rectangle (3,1);
\draw (3,0) rectangle (4,1);

\draw[fill] (0,0) circle (3pt);
\draw[fill] (0,1) circle (3pt);
\draw[fill] (1,1) circle (3pt);
\draw[fill] (2,1) circle (3pt);
\draw[fill] (2,2) circle (3pt);
\draw[fill] (4,1) circle (3pt);
\draw[red, line width = 1mm] (0,0) -- (0,1) -- (1,1);
\draw[red, line width = 1mm] (0,-0.1) -- (4,-0.1) -- (4,1);
\draw[yellow, line width = 1mm] (1,1) -- (1,2) -- (2,2);
\draw[yellow, line width = 1mm] (0,0) -- (3,0) -- (3,1) --(4,1);
\end{tikzpicture}.\]
Again, to find a mirror with the correct period sequence, one must find a rigid maximally mutable Laurent polynomial supported on the resulting Newton polytope. This is
 \[x + y w + y + z + w + 1/x + 1/(x w) + 1/(x z) + z/(x y w) + 1/(x y) + 1/(x y w)
+ 1/(x y z)
 \]
\end{eg}
\begin{eg} Consider the quiver flag zero locus given by the quiver flag variety
\begin{center}
\includegraphics[scale=0.5]{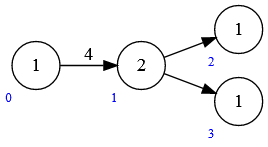}
\end{center}
and bundle $W_1 \oplus W_2$. It corresponds to PID 20. The toric degeneration is given by the following ladder diagram:
\[\begin{tikzpicture}[scale=0.6]
\draw (1,0) rectangle (2,1);
\draw (2,0) rectangle (3,1);
\draw (3,0) rectangle (4,1);
\draw (2,1) rectangle (3,2);
\draw (1,1) rectangle (2,2);
\draw (0,1) rectangle (1,2);
\draw[red, line width = 1mm] (1,0) -- (4,0);
\draw[red, line width = 1mm] (4,0) -- (4,1);
\draw[yellow, line width = 1mm] (0,1) -- (0,2);
\draw[yellow, line width = 1mm] (0,2) -- (3,2);

\draw[fill] (0,1) circle (3pt);
\draw[fill] (1,1) circle (3pt);
\draw[fill] (2,1) circle (3pt);
\draw[fill] (3,1) circle (3pt);
\draw[fill] (4,1) circle (3pt);
\draw[fill] (1,0) circle (3pt);
\draw[fill] (3,2) circle (3pt);
\end{tikzpicture}.\]
The mirror produced is
$$x + y + z + w + z/y + 1/(y w) + w/x + 1/(x z).$$
\end{eg}
\begin{eg}
Consider the quiver flag zero locus with PID 232 given by the quiver flag variety
\begin{center}
\includegraphics[scale=0.5]{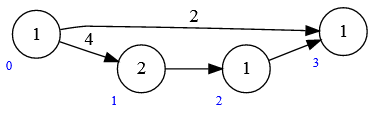}
\end{center}
and the quiver flag bundles $\det(W_1) \oplus W_1 \otimes W_3$. The ladder diagram and paths given by the prescribed method are
\[\begin{tikzpicture}[scale=0.6]
\draw (0,0) rectangle (1,1);
\draw (1,1) rectangle (2,2);
\draw (0,1) rectangle (1,2);
\draw (1,0) rectangle (2,1);
\draw (2,0) rectangle (3,1);
\draw (3,0) rectangle (4,1);
\draw (4,0) rectangle (5,1);

\draw[red, line width = 1mm] (0,0) -- (0,1) -- (1,1);
\draw[red, line width = 1mm] (0,-0.1) -- (5,-0.1) -- (5,1);
\draw[yellow, line width = 1mm] (1,1) -- (1,2) -- (2,2);
\draw[yellow, line width = 1mm] (0,0) -- (4,0) -- (4,1) --(5,1);
\draw[green, line width = 1mm] (-0.1,0) -- (-0.1,2.1) -- (2,2.1);

\draw[fill] (0,0) circle (3pt);
\draw[fill] (0,1) circle (3pt);
\draw[fill] (1,1) circle (3pt);
\draw[fill] (2,1) circle (3pt);
\draw[fill] (2,2) circle (3pt);
\draw[fill] (3,1) circle (3pt);
\draw[fill] (5,1) circle (3pt);
\end{tikzpicture}\]
The mirror is
\[\begin{split}x + y + z + w + w/y + 1/y + 1/x + 1/(x w) \\ + 1/(x z) + 1/(x z w) + 2/(x y) +
1/(x y w) + 1/(x y z) + 1/(x y z w) + 1/(x^2 z w) \\+ 1/(x^2 y w) + 2/(x^2 y z w)
+ 1/(x^2 y z w^2) + 1/(x^3 y z w^2)
.\end{split}\]
\end{eg}
Of the 141 Fano fourfolds found in \cite{kalashnikov}, all can be modeled as quiver flag zero loci in a $Y$-shaped quiver. For 99 of these, the methods above produce a candidate Laurent polynomial mirror. Many of the remaining did not have a convex partition supporting the choice of line bundles. The next example is an example of this; in this case (but not usually), one is able to find a degeneration of the complete intersection to a toric variety; we find the Laurent polynomial associated to this toric variety and after taking the rigid maximally mutable Laurent polynomial on its polytope, find a mirror.
\begin{eg}[PID 104] Consider the quiver flag variety obtained from the quiver flag variety with PID 104 in the tables by grafting:
\begin{center}
\includegraphics[scale=0.5]{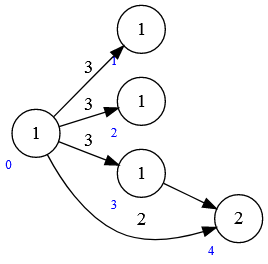}.
\end{center}
The quiver flag zero locus is given by bundles $W_1 \otimes W_4 \oplus W_2 \otimes W_4.$
The toric degeneration is given by the product of the 3 toric varieties given by the three ladder diagrams below:
\[\begin{tikzpicture}[scale=0.6]
\draw (0,0) rectangle (1,1);
\draw (1,0) rectangle (2,1);
\draw (2,1) rectangle (3,2);
\draw (2,0) rectangle (3,1);

\draw[fill] (0,0) circle (3pt);
\draw[fill] (2,1) circle (3pt);
\draw[fill] (3,2) circle (3pt);
\draw[green, line width = 1mm] (0,0) -- (2,0) --(2,1);
\draw[red, line width = 1mm] (2,1) -- (3,1) -- (3,2);
\draw[yellow, line width = 1mm] (0,0) -- (0,1) -- (2,1);
\draw[blue, line width = 1mm] (2,1) -- (2,2) -- (3,2);
\end{tikzpicture}
\hspace{5mm}
\begin{tikzpicture}[scale=0.6]
\draw (0,0) rectangle (1,1);
\draw (1,0) rectangle (2,1);
\draw[fill] (0,0) circle (3pt);
\draw[fill] (2,1) circle (3pt);
\draw[green, line width = 1mm] (0,0) -- (2,0) -- (2,1);
\draw[red, line width = 1mm] (0,0) -- (0,1) -- (2,1);
\end{tikzpicture}
\hspace{5mm}
\begin{tikzpicture}[scale=0.6]
\draw (0,0) rectangle (1,1);
\draw (1,0) rectangle (2,1);
\draw[fill] (0,0) circle (3pt);
\draw[fill] (2,1) circle (3pt);
\draw[yellow, line width = 1mm] (0,0) -- (2,0) -- (2,1);
\draw[blue, line width = 1mm] (0,0) -- (0,1) -- (2,1);
\end{tikzpicture}.\]
Notice that there is no convex partition which will give these bundles, because there is no choice of basis of divisors in the first ladder diagram such that all chosen divisors are in the positive span. We instead construct a toric degeneration, using similar ideas to that of \cite{doranharder}. Suppose the fan of toric variety is in the lattice $N_\mathbb{R}$. I find $v_1,\dots,v_4 \in (N^\vee)_\mathbb{R}$ such that they define binomial sections of the four line bundles; the associated toric subvariety has the following Laurent polynomial mirror, with matching period sequence. 
$$x + y + z + w + y/(x w) + 1/x + 1/(x w) + w/(x z) + 1/(x z) + 1/(x y) + w/(x y z) + 1/(x y z)$$
\end{eg}
In other examples, the degeneration of the ambient quiver flag variety has too low Picard rank. Consider the quiver flag variety which appeared in one of the factors in the previous example:
\begin{center}
\includegraphics[scale=0.5]{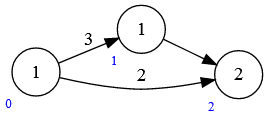}
\end{center}
with toric degeneration given by the ladder diagram 
\[\begin{tikzpicture}[scale=0.6]
\draw (0,0) rectangle (1,1);
\draw (1,0) rectangle (2,1);
\draw (2,1) rectangle (3,2);
\draw (2,0) rectangle (3,1);

\draw[fill] (0,0) circle (3pt);
\draw[fill] (2,1) circle (3pt);
\draw[fill] (3,2) circle (3pt);
\end{tikzpicture}.\]
Notice that from the quiver flag variety, we would expect to have a class group of rank at least 3 (generated by two Weil divisors coming from $W_2$ and one Cartier divisor from $W_1$), but the toric degeneration has rank only 2. In the previous example, we are still able to find a mirror, because the bundles only involve $W_2$. However, in the case of PID 15, where the quiver flag variety is
\begin{center}
\includegraphics[scale=0.5]{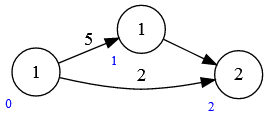}
\end{center}
and the bundles are $W_1 \otimes W_2$, this method fails to produce a mirror. The ladder diagram is
\[\begin{tikzpicture}[scale=0.6]
\draw (0,0) rectangle (1,1);
\draw (1,0) rectangle (2,1);
\draw (4,1) rectangle (5,2);
\draw (2,0) rectangle (3,1);
\draw (3,0) rectangle (4,1);
\draw (4,0) rectangle (5,1);

\draw[fill] (0,0) circle (3pt);
\draw[fill] (4,1) circle (3pt);
\draw[fill] (5,2) circle (3pt);
\end{tikzpicture},\]
so we see that one of the summands of $W_2$ and $W_1$ degenerate to the same bundle.

In the theorem below, we show that for a Fano complete intersection in a Fano $Y$-shaped quiver, there is always a convex partition for the degenerate toric variety and line bundles. 
\begin{thm}\label{extendprz} For any Fano complete intersection in a $Y$-shaped quiver flag variety, the extended Przyjalkowski method can be applied to produce a Laurent polynomial.
\end{thm}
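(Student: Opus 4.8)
The plan is to produce an explicit extended convex partition in the sense of Definition~\ref{defn:convexpartition} for the GIT data of $X(Q)$ together with the line bundles $L_1,\dots,L_k$ cutting out the degenerate complete intersection, and then invoke the construction following that definition, which already shows that the substitution procedure terminates in a genuine Laurent polynomial whenever such a partition exists. The geometric input is that a Fano complete intersection in a $Y$-shaped quiver flag variety is cut out by sections of tensor powers of the $\det(S_i^*)=\det(Q_i)$, so each cutting bundle degenerates to one of the $L_i$ (or a sum of them) on $X(Q)$, and by Corollary~\ref{cor:...} each $L_i=\sum_{a\in\Pi_i} D_a$ for a path $\Pi_i$ between $O_j$ and $v_i$. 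Thus the whole problem is combinatorial: exhibit a basis set $B\subset L(Q)_1$ of arrows, a splitting $B=B_0\sqcup B_1$, and a partition of the remaining arrows into $S_0,S_1,\dots,S_k$ satisfying the three conditions, all read off from the ladder quiver.

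First I would choose $B_0$ to be a spanning set of arrows realising a single meander (equivalently, a maximal cone of the fan), so that the $D_a$, $a\in B_0$, together with the source relations form a basis of $\LL^\vee$ and every $D_i$ lies in the positive span of the $B_0$; this positivity is exactly equation~\eqref{anticanonical} applied path-by-path, since each $L_i$ is a nef Cartier divisor whose class is a nonnegative combination of the external-vertex classes spanned by the meander. Next, for each cutting line bundle $L_i$, $i>0$, I would take $S_i$ to be the set of arrows in a chosen representative path $\Pi_i$ that are \emph{not} already in $B_0$, so that $L_i=\sum_{j\in S_i}D_j$ holds modulo the $B_0$-relations, and set $S_0$ to be the leftover arrows; the distinguished element $j_i\in S_i$ I would pick to be the unique arrow out of the source (or the first horizontal arrow on the path), arranged so that $j_i\notin B_1$ whenever $S_i\cap B_1\neq S_i$. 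The role of $B_1$ is to absorb arrows whose associated variable must be solved for as a monomial rather than eliminated through a denominator factor; I would place into $B_1$ exactly those basis arrows not needed to clear the $\sum_{k\in S_i}y_{ik}$ denominators, and verify that the solvability condition spelled out after Definition~\ref{defn:convexpartition} holds.

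The verification that these data satisfy conditions (1)--(3) reduces to two checks. One, that $B_0$ really is part of a basis and that each $L_i$ is a positive combination of the $D_j$, $j\in B_0$: the basis claim follows from the bijection between meanders and maximal cones (Lemma on minimal anti-cones), and the positivity from the meander structure just as in the proof that the $D_i$ are Cartier. Two, that the distinguished elements $j_i$ can be chosen consistently so that the Laurent-polynomial substitution never requires dividing by a genuine polynomial; this is where the $Y$-shaped hypothesis is essential, because the tree structure of $L(Q)$ guarantees that the paths $\Pi_i$ can be ordered (by the $S_1'/S_2'$ branch priority used in Section~\ref{sec:SAGBI}) so that the denominators $\sum_{k\in S_i}y_{ik}$ cascade and cancel.

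The main obstacle I anticipate is precisely this last consistency check: proving that a \emph{single} global choice of distinguished elements $\{j_i\}$ works simultaneously for all the line bundles, rather than choosing each $S_i$ in isolation. In a general toric variety the required compatibility can fail, which is exactly why the non-complete-intersection examples and the low-Picard-rank examples earlier in this section do \emph{not} admit convex partitions. The argument must therefore use the special combinatorics of $Y$-shaped ladder quivers — the fact that the external vertices lie along two linearly ordered branches and that each $L_i$ is a \emph{single} path (not a nontrivial sum of paths) in the complete intersection case — to show the paths $\Pi_i$ can be nested compatibly with a single meander $B_0$. Once that nesting is established, conditions (1)--(3) follow by direct inspection and the construction after Definition~\ref{defn:convexpartition} yields the desired Laurent polynomial, completing the proof.
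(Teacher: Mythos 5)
Your high-level plan (build an explicit extended convex partition from the ladder quiver and then invoke the substitution procedure) matches the paper's, but the core of your construction has a genuine gap. Condition (2) of Definition~\ref{defn:convexpartition} demands the \emph{exact} equality $L_i=\sum_{j\in S_i}D_j$ with $S_0,S_1,\dots,S_k$ forming a \emph{partition} of $\{1,\dots,m\}-B_0$, hence pairwise disjoint. Your $S_i$ --- the arrows of a representative path $\Pi_i$ with those lying in $B_0$ deleted --- only gives $L_i=\sum_{j\in S_i}D_j$ ``modulo the $B_0$-relations,'' as you yourself write, which is not what the definition requires; and you give no mechanism forcing two cutting bundles that both involve $\det(W_i)$ (a completely typical situation, e.g.\ two copies of $\det(S_i^*)$ among the cutting bundles) to receive \emph{disjoint} arrow sets. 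This is exactly the problem the paper solves, by a step absent from your proposal: it decomposes the arrows of $L(Q)$ into $\sum_i(s_i-s_i')$ pairwise non-overlapping paths, with precisely $s_i-s_i'$ of them running from $O_j$ to $v_i$ (constructed explicitly first on a Grassmannian grid, then overlaid branch by branch for the $Y$-shaped ladder). One path per external vertex feeds the basis $B_0$ (extended by $B_1$), and the remaining $s_i-s_i'-1$ disjoint paths per vertex are \emph{whole} paths avoiding $B_0$, so each sums exactly to $D_i$ and they can be handed out one per occurrence of $\det(W_i)$ among the cutting bundles.

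The second missing ingredient is the Fano hypothesis, which your argument never invokes even though it appears in the statement. The counting only closes because the complete intersection being Fano forces each $\det(W_i)$ to occur at most $s_i-s_i'-1$ times in total as a factor of the cutting line bundles --- exactly the number of disjoint paths left over after reserving one for the basis. Without this bound there may simply be too few disjoint arrow sets and no convex partition exists at all. Relatedly, you locate the ``main obstacle'' in the consistent choice of distinguished elements $j_i$ (condition (3)) and a cascading of denominators, but that part is mild once (1)--(2) hold: the construction following Definition~\ref{defn:convexpartition} already shows the substitutions produce a Laurent polynomial. The genuine difficulty is the disjoint-path decomposition together with the Fano multiplicity bound, neither of which your proposal supplies.
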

\begin{proof}
Let $Q$ be a $Y$-shaped quiver, and $i \in S_j$. First, we show that there are $s_i-s_i'$ non-overlapping paths between $O_j$ and $i$. We could translate this statement to the statement that there is a length $\sum_{i} s_i-s_i'$ chain under the partial order $\leq$ on the $p^l_\sigma$, such that $s_i-s_i'$ of the paths are between $O_j$ and $v_i$.

Consider first the ladder quiver of a Grassmannian, which is from an $n-r \times r$ grid of boxes. There are clearly $n$ distinct paths from the source to single external vertex $v_1$. For example, for $Gr(4,2)$, consider the following four paths:
 \[\begin{tikzpicture}[scale=0.6]
\draw (0,0) rectangle (1,1);
\draw (1,0) rectangle (2,1);
\draw (0,1) rectangle (1,2);
\draw (1,1) rectangle (2,2);

\draw[fill] (0,0) circle (3pt);
\draw[fill] (1,1) circle (3pt);
\draw[fill] (2,2) circle (3pt);
\draw[yellow, line width = 0.8mm] (0,0) -- (2,0) -- (2,2);
\draw[red, line width = 0.8mm] (0,0.1) -- (1,0.1) -- (1,1) -- (1.9,1) -- (1.9,2);
\draw[green, line width = 0.8mm] (0.1,0) -- (0.1,1) -- (1,1) -- (1,1.9) -- (2,1.9);
\draw[blue, line width = 0.8mm] (0,0) -- (0,2) -- (2,2);
\end{tikzpicture}.\]
In general, we can choose them in this way (starting with the path that goes as east for $n-r$ steps, the north for $r$ steps, then the one that goes east $n-r-1$ steps, then north, then east, then north until it reaches $v_i$, and so on). Label the $n$ paths chosen in this way from $1,\dots,n$ from bottom/right to top/left. 

Now consider a general Fano $Y$-shaped quiver. We first show the paths in an example. Consider the ladder diagram of the quiver \ref{eg:Yshaped2}:
\[\begin{tikzpicture}[scale=0.6]
\draw (-1,0) rectangle (0,1);
\draw (-1,1) rectangle (0,2);
\draw (-1,2) rectangle (0,3);
\draw (0,0) rectangle (1,1);
\draw (1,1) rectangle (2,2);
\draw (0,1) rectangle (1,2);
\draw (1,0) rectangle (2,1);
\draw (2,1) rectangle (3,2);
\draw (2,0) rectangle (3,1);
\draw (3,0) rectangle (4,1);
\draw (4,0) rectangle (5,1);
\draw (5,0) rectangle (6,1);
\draw (6,0) rectangle (7,1);
\draw (0,2) rectangle (1,3);
\draw (1,2) rectangle (2,3);
\draw (2,2) rectangle (3,3);
\draw (2,3) rectangle (3,4);
\draw (-2,2) rectangle (-1,3);
\draw (-3,2) rectangle (-2,3);

\draw[fill,blue] (-1,0) circle (3pt);
\draw[fill,green] (2,3) circle (3pt);
\draw[fill] (-1,2) circle (3pt);
\draw[fill] (-3,2) circle (3pt);
\draw[fill] (0,1) circle (3pt);
\draw[fill] (0,2) circle (3pt);
\draw[fill] (1,1) circle (3pt);
\draw[fill] (2,1) circle (3pt);
\draw[fill] (3,1) circle (3pt);
\draw[fill] (7,1) circle (3pt);
\draw[fill] (1,2) circle (3pt);
\draw[fill] (2,2) circle (3pt);
\draw[fill] (3,4) circle (3pt);

\draw[yellow, line width = 0.8mm] (-1,0) -- (7,0) -- (7,1);
\draw[yellow, line width = 0.8mm] (-1,0) -- (3,0) -- (3,1) -- (7,1);
\draw[yellow, line width = 0.8mm] (4,0) -- (4,1);
\draw[yellow, line width = 0.8mm] (5,0) -- (5,1);
\draw[yellow, line width = 0.8mm] (6,0) -- (6,1);
\draw[red, line width = 0.8mm] (-1,0.1) -- (2,0.1) -- (2,1) -- (3,1) -- (3,4);
\draw[red, line width = 0.8mm] (1,0.1) -- (1,1) -- (2,1) -- (2,2) -- (3,2);
\draw[red, line width = 0.8mm] (0,0.1) -- (0,1) -- (1,1) -- (1,2) -- (2,2) -- (2,3) -- (3,3);
\draw[red, line width = 0.8mm] (-1,0) -- (-1,1) -- (0,1) -- (0,2) -- (1,2) -- (1,3) -- (2,3) -- (2,4) -- (3,4);
\draw[blue, line width = 0.8mm] (-1.08,0) -- (-1.08,2) -- (0,2) -- (0,3.08) -- (2,3.08);
\draw[green, line width = 0.8mm] (-3,2) -- (-1,2) -- (-1,2.92) --  (2,2.92);
\draw[green, line width = 0.8mm] (-3,2) -- (-3,2.92) -- (-1,2.92);
\draw[green, line width = 0.8mm] (-2,2) -- (-2,2.92);
\end{tikzpicture}.\]
The colors give partition of the paths: there are five paths in the yellow region (which cover the yellow region) from $O_1$ to $v_4$, four paths in the red region from $O_1$ to $v_3$, one path in the blue region from $O_1$ to $v_1$, and three paths in the green region from $v_2$ to $O_2$. In each case, the number of paths in the prescribed region is $s_i-s_i'$. 

In the general case, consider one vertex $i$ which is not $1$ or the source. Suppose we have labeled the vertices such that next vertex on the leg is $i+1$, if it is exists, and the previous one is $i-1$.  The ladder quiver contains (part) of the ladder quiver of $\Gr(\tilde{s}_i,r_i)$. Select the paths corresponding to $r_{i+1}+1,\dots,r_{i},r_{i}+1,\dots, s_i-(s_{i-1}-r_{i-1})$, a total of $s_i-s_{i-1}+r_{i-1}-r_{i+1}=s_i-s_i'$ paths on the Grassmannian ladder (and also on the sub-ladder quiver of $Q$ corresponding to $i$). The way in which the grids are overlayed ensure that we choose in this way distinct paths for all $i \neq 1$. The paths chosen for $i=1$ is precisely what remains -- it is easy to see that there are $s_1-s_1'$ remaining paths.

Now suppose that we are given the data of a Fano complete intersection in a Fano $Y$-shaped quiver flag variety. The complete intersection is given by a direct sum of line bundles each of the form $\det(W_1)^{\otimes a_{1}} \otimes \cdots \otimes \det(W_{\rho})^{\otimes a_\rho}.$ 
We now translate the combinatorial requirements of the extended Przyjalkowski method to the quiver. Arrows on the quiver correspond to weights of the toric variety. Paths (or collections of paths) correspond to sums of the weights associated to each of the arrows. Notice that the paths described above partition the arrows of the ladder quiver into $\sum_{i} s_i-s_i'$ subsets. 

First, we choose a subset of the arrows corresponding to a basis. For each $i$, let $b_i$ be a path between $O_j$ and $v_i$ which can be broken into a maximal number of steps. Let $B_0$ be the maximal set of arrows in all of these paths such that they are linearly independent. Extend $B_0$ to a basis by choosing arrows into each vertex -- this is $B_1$. Considering the arrows not contained in $B_0$, we see there is at least enough to build $s_i-s_i'-1$ paths between $O_j$ and $v_i$ for each $i$. Each of these correspond to a copy of $\det(W_i)$. There fact that we are given a Fano toric complete intersection ensures that each $\det(W_i)$ appears no more than $s_i-s_i'-1$ times as a factor in all of the line bundles. This means that we can choose a convex partition for these line bundles, and hence apply the extended Przyjalkowsi method. 
\end{proof}

To find mirrors of quiver flag zero loci more generally, we need to find good degenerations of quiver flag varieties beyond $Y$-shaped quivers. We give one example of such a degeneration; a SAGBI basis degeneration of the sections of the $\det(S_i^*)$ (see \ref{sec:coords}) which cannot be represented as a ladder diagram: instead it is represented by  a \emph{bound} ladder diagram. Bound quivers were introduced in \cite{CrawSmith} as certain subvarieties of toric quiver flag varieties.

Consider the quiver flag variety $M_Q$
\begin{center}
\includegraphics[scale=0.5]{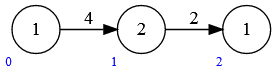}. 
\end{center}
The coordinates on $M_Q$ given by \S \ref{sec:coords} are the maximal minors of the matrices
\[\begin{bmatrix}
x_{11} & x_{12} & x_{13} & x_{14} \\
x_{21} & x_{22} & x_{23} & x_{24}\\
\end{bmatrix},
\hspace{5mm}
\begin{bmatrix}
x_{11} & x_{12} & x_{13} & x_{14} & 0 & 0 & 0 &0 \\
x_{21} & x_{22} & x_{23} & x_{24} & 0 & 0 & 0 &0\\
0 & 0 & 0 & 0 &x_{11} & x_{12} & x_{13} & x_{14} \\
0 & 0 & 0 & 0 & x_{21} & x_{22} & x_{23} & x_{24} \\
z_{11} & z_{12} & z_{13} & z_{14} & z_{15} & z_{16} & z_{17} & z_{18} \\
z_{21} & z_{22} & z_{23} & z_{24} & z_{25} & z_{26} & z_{27} & z_{28} \\
z_{31} & z_{32} & z_{33} & z_{34} & z_{35} & z_{36} & z_{37} & z_{38}\\
\end{bmatrix}.\]
These minors define $M_Q$ as a subvariety of $\mathbb{P}^5 \times \mathbb{P}^6$. Define a monomial ordering induced by
$$z_{1 i_1}>x_{1 i_2}>z_{2 i_3}>x_{2 i_4}>z_{3 i_5}$$
and the lex ordering within the rows. This defines a toric degeneration of $M_Q$; the associated Laurent polynomial has correct period sequence.  One can also describe the degeneration as a subvariety of the ladder diagram
\[\begin{tikzpicture}[scale=0.6]
\draw (0,0) rectangle (1,1);
\draw (1,1) rectangle (2,2);
\draw (0,1) rectangle (1,2);
\draw (1,0) rectangle (2,1);
\draw (2,0) rectangle (3,1);
\draw (3,0) rectangle (4,1);
\draw (4,0) rectangle (5,1);
\draw (5,0) rectangle (6,1);
\draw (6,0) rectangle (7,1);

\draw[fill] (0,0) circle (3pt);
\draw[fill] (0,1) circle (3pt);
\draw[fill] (1,1) circle (3pt);
\draw[fill] (2,1) circle (3pt);
\draw[fill] (2,2) circle (3pt);
\draw[fill] (7,1) circle (3pt);
\end{tikzpicture}\]
Notice that the ladder diagram is the ladder diagram for the quiver flag variety
\begin{center}
\includegraphics[scale=0.5]{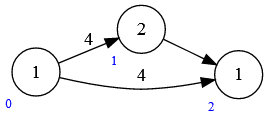}. 
\end{center}
$M_Q$ is a subvariety of this quiver flag variety cut out by a section of $S_1^* \otimes W_2$.
\[\begin{tikzpicture}[scale=0.6]
\draw (0,0) rectangle (1,1);
\draw (1,1) rectangle (2,2);
\draw (0,1) rectangle (1,2);
\draw (1,0) rectangle (2,1);
\draw (2,0) rectangle (3,1);
\draw (3,0) rectangle (4,1);
\draw (4,0) rectangle (5,1);
\draw (5,0) rectangle (6,1);
\draw (6,0) rectangle (7,1);

\draw[fill] (0,0) circle (3pt);
\draw[fill] (0,1) circle (3pt);
\draw[fill] (1,1) circle (3pt);
\draw[fill] (2,1) circle (3pt);
\draw[fill] (2,2) circle (3pt);
\draw[fill] (7,1) circle (3pt);

\end{tikzpicture}\]
To describe the subvariety of the ladder diagram, recall that each arrow in the corresponding ladder quiver determines a variable in $S_Q$. We label the vertices in the ladder diagram by their Cartesian coordinates, so that the source is at $(0,0)$. We draw the relevant arrows on the diagram below, and label them in the text for further clarity. 
\[\begin{tikzpicture}[scale=0.6]
\draw (0,0) rectangle (1,1);
\draw (1,1) rectangle (2,2);
\draw (0,1) rectangle (1,2);
\draw (1,0) rectangle (2,1);
\draw (2,0) rectangle (3,1);
\draw (3,0) rectangle (4,1);
\draw (4,0) rectangle (5,1);
\draw (5,0) rectangle (6,1);
\draw (6,0) rectangle (7,1);

\draw[fill] (0,0) circle (3pt);
\draw[fill] (0,1) circle (3pt);
\draw[fill] (1,1) circle (3pt);
\draw[fill] (2,1) circle (3pt);
\draw[fill] (2,2) circle (3pt);
\draw[fill] (7,1) circle (3pt);
\draw[yellow, line width = 1mm] (0,0) -- (5,0) -- (5,1) -- (7,1);
\node[] at (4.8,0.5) {\tiny $x_1$};
\draw[yellow, line width = 1mm] (0,0) -- (6,0) -- (6,1) -- (7,1);
\node[] at (5.8,0.5) {\tiny $x_2$};
\draw[yellow, line width = 1mm] (0,0) -- (7,0) -- (7,1) -- (7,1);
\node[] at (6.8,0.5) {\tiny $x_3$};
\draw[red, line width = 1mm] (0,1) -- (0,2) -- (2,2);
\node[] at (-0.2,1.5) {\tiny $y_1$};
\draw[green, line width = 1mm] (1,1) -- (1,1.9) -- (2,1.9);
\node[] at (0.8,1.5) {\tiny $y_2$};
\draw[blue, line width = 1mm] (0,1) -- (1,1);
\node[] at (0.5,0.8) {\tiny $y_3$};
\draw[orange, line width = 1mm] (2,1) -- (1,1);
\node[] at (1.5,0.8) {\tiny $y_4$};
\draw[purple, line width = 1mm] (2,1) -- (2,2);
\node[] at (1.8,1.5) {\tiny $y_5$};
\end{tikzpicture}\]
Label three of the paths from $(0,0)$ to $(7,1)$ with  variables $x_1, x_2, x_3$ (these arrows are marked in yellow on the above diagram). Label the arrow from from $(0,1)$ to $(2,2)$ as $y_1$ (in red), the arrow from $(1,1)$ to $(2,2)$ as $y_2$ (in green), the arrow from $(0,1)$ to $(1,1)$ as $y_3$ (in blue), the arrow from $(1,1)$ to $(2,1)$ as $y_4$ (in orange), and the arrow from $(2,1)$ to $(2,2)$ as $y_5$ (in purple). 
Then the ideal determining the toric variety is given by the binomial relations
$$(x_1 y_2 y_3- y_1 x_2, x_1 y_3 y_4 y_5 -y_1 x_3, x_2 y_3 y_4 y_5 - x_3 y_2 y_3).$$
In other words, this identifies the right most two boxes with the uppermost two boxes. 

The quiver flag zero locus $X$ given by $M_Q$ and the bundle $W_2^{\oplus 3}$ has period sequence PID 29. Pulling back the divisors indicated by choosing three distinct paths from $(0,0)$ to $(7,1)$ in the ladder diagram result in the following candidate Laurent polynomial mirror with matching period sequence (up to 20 terms) to $X$:
$$x + y + z + w + w/z + 1/(y z) + z/(x w) + 1/(x w) + 1/(x y) + 1/(x y z).$$

\appendix
\section{Connection to the Pl\"ucker coordinate mirror of the Grasssmannian}\label{ap:plucker}
In this section, we explain how the splitting of vector bundles under the toric degeneration corresponds to a similar phenomenon that arises in the Pl\"ucker coordinate mirror of the Grassmannian (which we call $W_P$) of \cite{MarshRietsch}. They propose a mirror for the Grassmannian using the quantum cohomology ring of the dual Grassmannian. The mirrors produced by the method proposed in this paper for quiver flag zero loci are conjectural; even the fact that Hori--Vafa mirror of $X(Q)$ is a mirror of $M_Q$ when $M_Q$ is a flag variety is conjectural. The only case where this is known is for the Grassmannian; this is proved in \cite{MarshRietsch} as a corollary of statements shown about $W_P$. 

We briefly recall the construction of $W_P$. The quantum cohomology ring of the Grassmannian $\Gr(n,k)$ is generated by Schur polynomials in the Chern roots of the tautological quotient bundle $E$. The partitions allowed are those such that the corresponding Young tableau fit inside a $k\times n-k$ rectangle. For example, $c_1(W)=s_{\yng(1)}$ and $1=s_{\emptyset}$.  Multiplication is given by quantum Schubert calculus. 

A partition is called \emph{rectangular} if the associated Young tableau is a rectangle. There are $n$ rectangular partitions which are either maximally wide or maximally long (i.e. $k \times j$ rectangles or $j \times n-k$ rectangles). Let $\lambda$ be such a partition. If $\lambda$ is not the maximal rectangle, there is exactly one way to add a single box to the rectangle while fitting inside a $k \times n-k$ box: call this partition $\lambda'$. The following identity holds in quantum cohomology:
$$s_{\yng(1)}*s_{\lambda}=s_{\lambda'}.$$
If $\lambda$ is the maximal rectangle, then let $\lambda'$ be the partition coming from the $k-1\times n-k-1$ rectangle. Then we have
$$s_{\yng(1)}*s_{\lambda}=q s_{\lambda'}$$
where $q$ is the quantum parameter in quantum cohomology. 

Each of these $n$ equations gives a description of $s_{\yng(1)}$, up to localising the rectangular Schur polynomials. For example, for $\Gr(4,2)$, the following relations hold:
$$\frac{s_{\yng(1)}}{s_{\emptyset}}=\frac{s_{\yng(2,1)}}{s_{\yng(2)}}=\frac{s_{\yng(2,1)}}{s_{\yng(1,1)}}=\frac{q s_{\yng(1)}}{s_{\yng(2,2)}}.$$
 The Pl\"ucker coordinate mirror $W_P$ is the sum of these $n$ descriptions of $s_{\yng(1)}$, with each Schur polynomial reinterpreted as a Pl\"ucker coordinate. This is done by interpreting the Young tableau as tracing out a path inside the $k\times n-k$ grid of boxes and looking at where the vertical steps are -- this gives a subset of $\{1,\dots,n\}$ which corrresponds to a Pl\"ucker coordinate in the usual way. For example, $s_{\yng(2,1)}$ corresponds to $p_{13}$ on $\Gr(4,2)$. The mirror for $\Gr(4,2)$ is thus
$$W_P=\frac{p_{\yng(1)}}{p_{\emptyset}}+\frac{p_{\yng(2,1)}}{p_{\yng(2)}}+\frac{p_{\yng(2,1)}}{p_{\yng(1,1)}}+\frac{q p_{\yng(1)}}{p_{\yng(2,2)}}.$$

Given a cluster seed (which in particular is a collection of algebraically independent elements of the coordinate ring), one can write $W_P$ as a Laurent polynomial in the elements of the seed. There is a distinguished seed consisting of the rectangle Pl\"ucker coordinates. In this seed, $W_P$ agrees with Eguchi--Hori--Xiong \cite{eguchi} mirror $W_{EHX}$ for the Grassmannian. The EHX mirror is the Laurent polynomial mirror of the Gelfand--Cetlin toric degeneration of the Grassmannian. One could write down the EHX mirror (in a different basis) by considering the Hori--Vafa mirror to $X(Q)$, where $\Gr(n,k)=M(Q,(1,k))$, but the usual description is the head over tails version, which we now explain.

To construct this, draw a dual ladder diagram, pictured below in blue for $\Gr(4,2)$.
\[\begin{tikzpicture}[scale=0.6]
\draw (0,0) rectangle (1,1);
\draw (1,0) rectangle (2,1);
\draw (0,1) rectangle (1,2);
\draw (1,1) rectangle (2,2);
\draw[fill] (0,0) circle (3pt);
\draw[fill] (1,1) circle (3pt);
\draw[fill] (2,2) circle (3pt);
\draw[fill,blue] (1.5,1.5) circle (2pt);
\draw[fill,blue] (0.5,0.5) circle (2pt);
\draw[fill,blue] (1.5,0.5) circle (2pt);
\draw[fill,blue] (0.5,1.5) circle (2pt);
\draw[fill,blue] (0.5,2.5) circle (2pt);
\draw[fill,blue] (2.5,0.5) circle (2pt);

\draw[blue,->] (0.5,0.5) -- (1.4,0.5);
\draw[blue,<-] (0.5,0.6) -- (0.5,1.5);
\draw[blue,<-] (1.5,0.6) -- (1.5,1.5);
\draw[blue,->] (0.5,1.5) -- (1.4,1.5);
\draw[blue,<-] (2.4,0.5) -- (1.5,0.5);
\draw[blue,->] (0.5,2.5) -- (0.5,1.6);
\end{tikzpicture}\]
For a general Grassmannian, this is formed by putting a vertex in each square of the ladder diagram, and drawing vertical and horizontal arrows between the vertices oriented down and left. There are two extra arrows, in the top left corner and the bottom right, oriented as in the example. Notice that each arrow in the ladder quiver is crossed precisely once by an arrow in the dual ladder diagram -- the two sets of arrows are in 1:1 correspondence. Note that the number of internal vertices is precisely the dimension of the Grassmannian. The head over tails mirror is given by assigning a variable to each internal vertex in the quiver, and setting the top external vertex to $1$ and the right one to $q$. The mirror is then 
$$W_{EHX}=\sum_{arrows} \frac{z_{t(a)}}{z_{s(a)}}.$$
\begin{rem} For a Fano $Y$-shaped quiver, a dual ladder diagram can also be drawn; this gives an alternative description of the mirror of $X(Q)$ as the heads over tail mirror of this dual ladder diagram. Note, in particular, that the number of boxes in the ladder diagram is the dimension of $M_Q$.
\end{rem}
We can naturally label the vertices as $z_{ij}$ where $1 \leq i \leq k$ and $1 \leq j \leq n-k$. To see that this construction agrees with the Hori--Vafa mirror (up to a $\GL(\ZZ)$ transformation, and setting $q=1$) is precisely the check that the two descriptions of $X(Q)$ (the quiver description and the one given in \cite{flagdegenerations}) agree.

The identification between the EHX mirror and Pl\"ucker coordinate mirror $W_P$ is then done as follows. Set $z_{i,j}=\frac{p_{i \times j}}{p_{i-1 \times j-1}}$, where $p_{i\times j}$ is the Pl\"ucker coordinate corresponding to the partition associated with the $i\times j$ rectangle, and setting $p_{\emptyset}=1$ as before. In \cite{MarshRietsch}, they show that under this change of coordinates $W_{EHX}$ is precisely the expansion of the Pl\"ucker coordinate mirror in the rectangles chart. In fact, this identification can be done monomial by monomial in $W_P$, by looking at columns of horizontal arrow and rows of vertical arrows (this will be explained in a forthcoming paper by Rietsch and Williams). There are $n-k-1$ columns of horizontal arrows in the dual ladder diagram. For example, in the above example, there is one column: those arrows in red below.
\[\begin{tikzpicture}[scale=0.6]
\draw (0,0) rectangle (1,1);
\draw (1,0) rectangle (2,1);
\draw (0,1) rectangle (1,2);
\draw (1,1) rectangle (2,2);
\draw[fill] (0,0) circle (3pt);
\draw[fill] (1,1) circle (3pt);
\draw[fill] (2,2) circle (3pt);
\draw[fill,blue] (1.5,1.5) circle (2pt);
\draw[fill,blue] (0.5,0.5) circle (2pt);
\draw[fill,blue] (1.5,0.5) circle (2pt);
\draw[fill,blue] (0.5,1.5) circle (2pt);
\draw[fill,blue] (0.5,2.5) circle (2pt);
\draw[fill,blue] (2.5,0.5) circle (2pt);

\draw[red,->] (0.5,0.5) -- (1.4,0.5);
\draw[blue,<-] (0.5,0.6) -- (0.5,1.5);
\draw[blue,<-] (1.5,0.6) -- (1.5,1.5);
\draw[red,->] (0.5,1.5) -- (1.4,1.5);
\draw[blue,<-] (2.4,0.5) -- (1.5,0.5);
\draw[blue,->] (0.5,2.5) -- (0.5,1.6);
\end{tikzpicture}\]
 There are $k-1$ rows of vertical arrows (in the above example, there is only one row, consisting of the two internal blue arrows). The sum of monomials in a row or column corresponding to arrows in a vertical column can be simplified via the Pl\"ucker relations to exactly one of the monomials in $W_P$. The remaining two terms of $W_P$ come from the arrows associated to the external vertices. In this example, summing over the single column of horizontal arrows we see that
$$\frac{p_{\yng(2)}}{p_{\yng(1)}}+\frac{p_{\yng(2,2)}}{p_{\yng(1)}p_{\yng(1,1)}}=\frac{p_{\yng(2,1)}}{p_{\yng(1,1)}}.$$

We have seen that under the Gelfand--Cetlin toric degeneration, the tautological quotient bundle $E$ corresponds to sets of $k$ rank $1$ reflexive sheaves. For example, each path from $0$ to $v_1$ with $k$ consecutive vertical steps through the interior of the ladder diagram gives such a set. There are $n-k-1$ such paths in the ladder diagram, call them $P_1,\dots,P_{n-k-1}$.  The tensor product of these sheaves is $c_1(L_1)$ by construction, which under the degeneration is exactly $c_1(E)$.  Here, we see that in the rectangles chart, precisely $n-k-1$ of the monomials in $W_P$  split into $k$ monomials. That is, we obtain a way of writing  $c_1(E)=s_{\yng(1)}$ as $k$ terms. Moreover, each of splittings of $s_{\yng(1)}$ corresponds a column of horizontal arrows in the dual ladder diagram. Each of these horizontal arrows correspond to an arrow in the ladder quiver, and together these trace out a path in the ladder quiver. The paths which arise in this manner are precisely the $P_i$. 
\section{Laurent polynomial mirrors} \label{ap:table}
In the table below, we record Laurent polynomial mirrors for 99 of the 141 Fano quiver flag zero loci found in \cite{kalashnikov}. The Period ID refers to the indexing system in that paper -- descriptions of the Fano varieties can be found there.
\begin{landscape}
\begin{longtable}{c C{18cm}}
\caption{Mirrors to some four dimensional Fano quiver flag zero loci}
\label{tab:laurent}\\
\toprule
\multicolumn{1}{c}{Period ID}&\multicolumn{1}{C{18cm}}{Laurent polynomial mirror}\\
\midrule
\endfirsthead
\multicolumn{2}{l}{\tiny Continued from previous page.}\\
\addlinespace[1.7ex]
\midrule
\multicolumn{1}{c}{Period ID}&\multicolumn{1}{c}{Laurent polynomial mirror}\\
\midrule
\endhead
\midrule
\multicolumn{2}{r}{\tiny Continued on next page.}\\
\endfoot
\bottomrule
\endlastfoot
\oddrow 20&$x + y + w + z + z/y + 1/(y w) + w/x + 1/(x z)$\\
\evnrow 25&$x + y + w + z + z/w + 1/(x z) + w/(x y) + w/(x y z) + 2/(x y) + w/(x^2 y^2 z)$\\
\oddrow 29&$x + y + w + w/z + z + 1/(y z) + z/(x w) + 1/(x w) + 1/(x y) + 1/(x y z)$\\
\evnrow 101&$x + x/y + y + w + z + 1/(w z) + 1/(y z) + 1/x$\\
\oddrow 102&$x + x/y + x/(y w z) + y + w + z + w/y + 2/(y z) + 1/x + w/(x y z)$\\
\evnrow 104&$x + y + w + z + y/(x w) + w/(x z) + 1/x + 1/(x z) + 1/(x w) + w/(x y z) + 1/(x y) + 1/(x y z)$\\
\oddrow 109&$x + y + w + z + z/w + w/y + z/y + 1/x + 1/(x z) + 1/(x w) + w/(x y z) + 1/(x y)$\\
\evnrow 115&$x + y w + y + w + z + 1/x + 1/(x z) + 1/(x w) + 1/(x y) + 1/(x y z) + z/(x y w) + 1/(x y w)$\\
\oddrow 116&$x + x/w + x/y + x/(y w) + y + w + z + w/(y z) + 2/(y z) + 1/(y w z) + 1/x$\\
\evnrow 126&$x^2/(y^2 w) + x + 2 x/y + x/(y w) + x/(y^2 w z) + y + w + z + 2/(y z) + 1/(y w z) + w/(x z) + 1/x + 1/(x z)$\\
\oddrow 142&$x + y + w + z + 1/y + 1/(y z) + 1/(y^2 w z) + 1/x + 1/(x y w z)$\\
\evnrow 154&$x + y + w + z + 1/y + 1/(y z) + z/x + 1/x + 1/(x y w) + 1/(x y w z)$\\
\oddrow 173&$x y/(w z) + x + y + w + z + 1/(w z) + 1/y + 1/x + 1/(x y)$\\
\evnrow 178&$x + y + y/z + w + z + 1/(w z) + 1/y + z/x + 1/x$\\
\oddrow 188&$x + y + w + z + w/y + 1/y + 1/x + 2/(x z) + 1/(x w z) + 2/(x y z) + 1/(x y w z) + 1/(x^2 w z^2) + 1/(x^2 y w z^2)$\\
\evnrow 190&$x + y + w + z + z/y + z^2/(y w) + z/(y w) + w/x + w/(x z) + z/x + 2/x + 1/(x z) + z/(x y) + 1/(x y) + z^2/(x y w) + 2 z/(x y w) + 1/(x y w)$\\
\oddrow 195&$x + y + w + z + w/y + y/(x z) + 2/x + 1/(x z) + z/(x w) + 2/(x y) + 1/(x^2 w) + 1/(x^2 y w)$\\
\evnrow 201&$x + y + w + z + z/y + 1/y + 1/(y w) + w/x + 1/x + 1/(x z) + 1/(x y) + 1/(x y w z)$\\
\oddrow 202&$x + y + w + z + 1/(w z) + 1/y + 1/(y w z) + y/x + 1/x + 1/(x z) + 1/(x y z)$\\
\evnrow 211&$x + y + w z + w + z + z/y + 1/y + w z/x + w/x + 1/x + 1/(x z) + 2 z/(x y) + 2/(x y) + 1/(x y w) + 1/(x y w z) + z/(x y^2 w) + 1/(x y^2 w)$\\
\oddrow 212&$x + y + w + z + w/(y z) + 2/(y z) + 1/(y w z) + w/x + 2/x + 1/(x w) + w/(x y z) + 2/(x y z) + 1/(x y w z)$\\
\evnrow 219&$x + y + w + w/z + z + w/(y z) + 1/y + y/(x w) + 1/x + 1/(x z) + 2/(x w) + 1/(x y z) + 1/(x y w)$\\
\oddrow 224&$x + x/y + y + w + z + 1/y + 1/(y z) + 1/x + 1/(x z) + 1/(x w) + 1/(x y w z) + 1/(x^2 w z)$\\
\evnrow 227&$x + y + w z + w + z + z/y + 1/y + 1/(y w) + 1/x + 1/(x z) + 1/(x w z) + z/(x y) + 1/(x y) + 1/(x y w)$\\
\oddrow 230&$x + y + w + z + 1/y + y/(x z) + y/(x w) + 1/x + 2/(x z) + 2/(x w) + 1/(x y z) + 1/(x y w) + y/(x^2 w z) + 2/(x^2 w z) + 1/(x^2 y w z)$\\
\evnrow 232&$x + y + w + z + w/y + 1/y + 1/x + 1/(x z) + 1/(x w) + 1/(x w z) + 2/(x y) + 1/(x y z) + 1/(x y w) + 1/(x y w z) + 1/(x^2 w z) + 1/(x^2 y w) + 2/(x^2 y w z) + 1/(x^2 y w^2 z) + 1/(x^3 y w^2 z)$\\
\oddrow 238&$x w z + x + y + w + z + 1/y + y/(x z) + y/(x w) + 1/x + 2/(x z) + 2/(x w) + 1/(x y z) + 1/(x y w)$\\
\evnrow 244&$x + x/z + x/w + x/(y w) + y w/z + y + w + w/z + z + 1/y + y w/(x z) + y/x + 1/x$\\
\oddrow 253&$x + y + w + z + w z/y + w/y + y z/(x w) + 2 y/(x w) + y/(x w z) + z/x + 2/x + 1/(x z) + z/(x w) + 2/(x w) + 1/(x w z) + z/(x y) + 2/(x y) + 1/(x y z)$\\
\evnrow 288&$x + y + w + w/z + z + 1/y + 1/(y z) + 2/x + 2/(x z) + 1/(x y w) + 1/(x y w z) + 1/(x^2 w) + 1/(x^2 w z)$\\
\oddrow 294&$x + y + w + w/z + z + z/y + 1/y + y/x + 2/x + 2/(x z) + y/(x^2 w) + 1/(x^2 w) + 1/(x^2 w z)$\\
\evnrow 297&$x + y + z + 1/z + 1/y + 1/(y w) + w z/x + w/x + z/x + 1/x + w z/(x y) + w/(x y) + 2 z/(x y) + 2/(x y) + z/(x y w) + 1/(x y w)$\\
\oddrow 303&$x + y + y/z + w + w/z + 1/z + 1/w + 1/y + 1/(y w) + w z/x + w/x + z/x + 1/x$\\
\evnrow 318&$x + y + w + 1/z + 1/w + 1/(w z) + z/y + 2/y + 1/(y z) + z/(y w) + 2/(y w) +1/(y w z) + w z/(x y) + w/(x y) + z/(x y) + 1/(x y)$\\
\oddrow 321&$x + y + w/z + 1/z + z/w + 1/w + w/(y z) + 2/y + 2/(y z) + z/(y w) + 2/(y w) + 1/(y w z) + w^2/(x y z) + 2 w/(x y) + 2 w/(x y z) + z/(x y) + 2/(x y) + 1/(x y z)$\\
\evnrow 327&$x + y + w z + w + z + 1/y + 1/(y w) + w z/x + 2 w/x + w/(x z) + z/x + 2/x + 1/(x z) + w z/(x y) + 2 w/(x y) + w/(x y z) + 2 z/(x y) + 3/(x y) + 1/(x y z) + z/(x y w) + 1/(x y w)$\\
\oddrow 341&$x + y + w/z + 1/z + w^2/(y z) + 2 w/y + 2 w/(y z) + z/y + 2/y + 1/(y z) + 1/x + z/(x w) + 1/(x w) + w^2/(x y z) + 2 w/(x y) + 3 w/(x y z) + z/(x y) + 4/(x y) + 3/(x y z) + z/(x y w) + 2/(x y w) + 1/(x y w z)$\\
\evnrow 342&$x + y + z + 1/z + w z/y + w/y + z/y + 1/y + 1/x + 1/(x z) + 1/(x w) + 1/(x w z) + w z/(x y) + 2 w/(x y) + w/(x y z) + 2 z/(x y) + 4/(x y) + 2/(x y z) + z/(x y w) + 2/(x y w) + 1/(x y w z)$\\
\oddrow 350&$x + w + z + w/y + z/y + 1/y + y w/(x z) + 2 y/x + y/(x z) + y z/(x w) + y/(x w) + w/(x z) + 3/x + 1/(x z) + 2 z/(x w) + 2/(x w) + 1/(x y) + z/(x y w) + 1/(x y w)$\\
\evnrow 361&$x + y w/z + 2 y + y/z + y z/w + y/w + w/z + 1/z + z/w + 1/w + w^2/(x z) + 2 w/x + 2 w/(x z) + z/x + 2/x + 1/(x z) + w^2/(x y z) + 2 w/(x y) + 2 w/(x y z) + z/(x y) + 2/(x y) + 1/(x y z)$\\
\oddrow 385&$x + y + 1/z + z/y + 2/y + 1/(y z) + z/(y w) + 2/(y w) + 1/(y w z) + w z/x + w/x + 2 z/x + 2/x + z/(x w) + 1/(x w)$\\
\evnrow 403&$x + y + w/z + 1/z + z/w + 1/w + 1/y + y w/x + y z/x + 2 y/x + y z/(x w) + y/(x w) + w/x + z/x + 2/x + z/(x w) + 1/(x w)$\\
\oddrow 404&$x + y + w/z + 1/z + z/w + 1/w + w/y + z/y + 2/y + z/(y w) + 1/(y w) + w/x + z/x + 1/x$\\
\evnrow 406&$x + y w + y z + y + w + z + 1/z + 1/w + 1/(w z) + 1/y + z/x + 1/x + z/(x y) + 1/(x y)$\\
\oddrow 413&$x + y w + y + w + z + 1/z + 1/y + z/x + 2/x + 1/(x z) + z/(x w) + 2/(x w) + 1/(x w z) + z/(x y) + 2/(x y) + 1/(x y z)$\\
\evnrow 421&$x + y w + y + w + z + 1/z + y w/x + y w/(x z) + 2 y/x + 2 y/(x z) + y/(x w) + y/(x w z) + w/x + w/(x z) + 3/x + 3/(x z) + 2/(x w) + 2/(x w z) + 1/(x y) + 1/(x y z) + 1/(x y w) + 1/(x y w z)$\\
\oddrow 422&$x + y z + y + z + 1/z + 1/w + 1/y + 1/(y w) + y z/x + 2 y/x + y/(x z) + w z/x + 2 w/x + w/(x z) + z/x + 2/x + 1/(x z)$\\
\evnrow 423&$x + y + y/z + y/w + y/(w z) + 1/z + 1/w + 1/(w z) + 1/y + y w z/x + y w/x + 2 y z/x + 3 y/x + y/(x z) + y z/(x w) + 2 y/(x w) + y/(x w z) + w z/x + w/x + 2 z/x + 3/x + 1/(x z) + z/(x w) + 2/(x w) + 1/(x w z)$\\
\oddrow 424&$x + w + w/z + z + 1/z + z/w + 1/w + w/(y z) + 2/y + 1/(y z) + z/(y w) + 1/(y w) + y w/(x z) + y/x + y/(x z) + w/(x z) + 1/x + 1/(x z)$\\
\evnrow 430&$x + y z + y + z + 1/z + 1/w + 1/y + 1/(y w) + w/x + w/(x z) + 2/x + 2/(x z) + 1/(x w) + 1/(x w z) + w/(x y) + w/(x y z) + 2/(x y) + 2/(x y z) + 1/(x y w) + 1/(x y w z)$\\
\oddrow 439&$x + y w z + y w + y z + y + w z + w + z + y w z/x + y w/x + y z/x + y/x + 2 w z/x + 2 w/x + 3 z/x + 4/x + 1/(x z) + z/(x w) + 2/(x w) + 1/(x w z) + w z/(x y) + w/(x y) + 2 z/(x y) + 3/(x y) + 1/(x y z) + z/(x y w) + 2/(x y w) + 1/(x y w z)$\\
\evnrow 464&$x + y + y/w + w + z + w z/y + 2 w/y + w/(y z) + z/y + 2/y + 1/(y z) + 1/x + 1/(x z) + 1/(x w) + 1/(x w z)$\\
\oddrow 473&$x + z/w + 1/w + 1/y + z/(y w) + 1/(y w) + y w/(x z) + y/x + y/(x z) + w^2/(x z) + 3 w/x + 3 w/(x z) + 3 z/x + 5/x + 2/(x z) + z^2/(x w) + 2 z/(x w) + 1/(x w) + w^2/(x y z) + 3 w/(x y) + 2 w/(x y z) + 3 z/(x y) + 4/(x y) + 1/(x y z) + z^2/(x y w) + 2 z/(x y w) + 1/(x y w)$\\
\evnrow 478&$x + y + y/z + w z + w + 2 z + 1/z + z/w + 1/w + 1/y + y/x + y/(x z) + y/(x w) + y/(x w z) + 1/x + 1/(x z) + 1/(x w) + 1/(x w z)$\\
\oddrow 483&$x + y z + 2 y + y/z + w z + 2 w + w/z + z + 1/z + 1/y + y/(x w) + y/(x w z) + 2/x + 2/(x z) + 1/(x w) + 1/(x w z) + w/(x y) + w/(x y z) + 1/(x y) + 1/(x y z)$\\
\evnrow 508&$x + y w z + 2 y w + y w/z + y z + 3 y + 2 y/z + y/w + y/(w z) + w z + 2 w + w/z + z + 2/z + 1/w + 1/(w z) + 1/x + 1/(x y)$\\
\oddrow 509&$x + y + z/w + 1/w + w^2/(y z) + 3 w/y + 2 w/(y z) + 3 z/y + 4/y + 1/(y z) + z^2/(y w) + 2 z/(y w) + 1/(y w) + w^2/(x z) + 2 w/x + 2 w/(x z) + z/x + 2/x + 1/(x z)$\\
\evnrow 512&$x + y + y/w + w + z + 1/z + 1/w + y z/x + 2 y/x + y/(x z) + w z/x + 2 w/x + w/(x z) + 2 z/x + 4/x + 2/(x z) + w z/(x y) + 2 w/(x y) + w/(x y z) + z/(x y) + 2/(x y) + 1/(x y z)$\\
\oddrow 516&$x + y w + y z + 2 y + y z/w + y/w + w + w/z + z + 1/z + z/w + 1/w + 1/y + 1/x + z/(x w) + 1/(x w) + 1/(x y) + z/(x y w) + 1/(x y w)$\\
\evnrow 517&$x + y + y/z + w z + w + z + 1/z + y w z/x + 2 y w/x + y w/(x z) + y z/x + 2 y/x + y/(x z) + 2 w z/x + 4 w/x + 2 w/(x z) + 3 z/x + 5/x + 2/(x z) + z/(x w) + 1/(x w) + w z/(x y) + 2 w/(x y) + w/(x y z) + 2 z/(x y) + 3/(x y) + 1/(x y z) + z/(x y w) + 1/(x y w)$\\
\oddrow 519&$x + y + y/w + y/(w z) + w + z + 1/z + 1/w + 1/(w z) + y^2/(x w) + y^2/(x w z) + 2 y/x + 2 y/(x z) + 3 y/(x w) + 3 y/(x w z) + w/x + w/(x z) + 4/x + 4/(x z) + 3/(x w) + 3/(x w z) + w/(x y) + w/(x y z) + 2/(x y) + 2/(x y z) + 1/(x y w) + 1/(x y w z)$\\
\evnrow 521&$x + y + w + z + z/w + 1/w + 1/y + z/(y w) + 1/(y w) + w^2/(x z) + 3 w/x + 2 w/(x z) + 3 z/x + 4/x + 1/(x z) + z^2/(x w) + 2 z/(x w) + 1/(x w) + w^2/(x y z) + 3 w/(x y) + 2 w/(x y z) + 3 z/(x y) + 4/(x y) + 1/(x y z) + z^2/(x y w) + 2 z/(x y w) + 1/(x y w)$\\
\oddrow 524&$x + y + 2 w + z + 1/z + w^2/y + 2 w/y + 1/y + y^2 z/(x w) + y^2/(x w) + 3 y z/x + 3 y/x + 2 y z/(x w) + 2 y/(x w) + 3 w z/x + 3 w/x + 4 z/x + 4/x + z/(x w) + 1/(x w) + w^2 z/(x y) + w^2/(x y) + 2 w z/(x y) + 2 w/(x y) + z/(x y) + 1/(x y)$\\
\evnrow 527&$x + y w z + 2 y w + y w/z + y z + 3 y + 2 y/z + y/w + y/(w z) + w z + 2 w + w/z + z + 2/z + 1/w + 1/(w z) + 1/x + 1/(x z) + 1/(x w) + 1/(x w z) + 1/(x y) + 1/(x y z) + 1/(x y w) + 1/(x y w z)$\\
\oddrow 529&$x + y + y z/w + y/w + 2 z/w + 2/w + 1/y + z/(y w) + 1/(y w) + y w^2/(x z) + 3 y w/x + 2 y w/(x z) + 3 y z/x + 4 y/x + y/(x z) + y z^2/(x w) + 2 y z/(x w) + y/(x w) + w^2/(x z) + 3 w/x + 2 w/(x z) + 3 z/x + 5/x + 1/(x z) + z^2/(x w) + 3 z/(x w) + 2/(x w) + 1/(x y) + z/(x y w) + 1/(x y w)$\\
\evnrow 549&$x + y w + y + w + z + 1/z + y w z/x + 2 y w/x + y w/(x z) + 2 y z/x + 4 y/x + 2 y/(x z) + y z/(x w) + 2 y/(x w) + y/(x w z) + w z/x + 2 w/x + w/(x z) + 3 z/x + 6/x + 3/(x z) + 2 z/(x w) + 4/(x w) + 2/(x w z) + z/(x y) + 2/(x y) + 1/(x y z) + z/(x y w) + 2/(x y w) + 1/(x y w z)$\\
\oddrow 552&$x y w z + x y w + x y z + x y + x w z + x w + x z + x + y w z + y w + y z + y + w z + w + z + 1/z + 1/w + 1/y + 1/(y z) + 1/x + 1/(x z) + 1/(x w)$\\
\evnrow 555&$x + y + y/z + y/w + w + w/z + z + 1/z + z/w + 1/w + y^2/(x z) + 2 y w/(x z) + 2 y/x + 3 y/(x z) + w^2/(x z) + 2 w/x + 4 w/(x z) + z/x + 4/x + 3/(x z) + w^2/(x y z) + 2 w/(x y) + 2 w/(x y z) + z/(x y) + 2/(x y) + 1/(x y z)$\\
\oddrow 558&$x + y + w + 2 z + z^2/w + 2 z/w + 1/w + w^2/(y z) + 3 w/y + 2 w/(y z) + 3 z/y + 4/y + 1/(y z) + z^2/(y w) + 2 z/(y w) + 1/(y w) + w^2/(x z) + 2 w/x + 2 w/(x z) + z/x + 2/x + 1/(x z)$\\
\evnrow 559&$x + y w + y z + y + w + w/z + z + 1/z + y w^2/(x z) + 2 y w/x + 3 y w/(x z) + y z/x + 4 y/x + 3 y/(x z) + y z/(x w) + 2 y/(x w) + y/(x w z) + w^2/(x z) + 2 w/x + 4 w/(x z) + z/x + 6/x + 5/(x z) + 2 z/(x w) + 4/(x w) + 2/(x w z) + w/(x y z) + 2/(x y) + 2/(x y z) + z/(x y w) + 2/(x y w) + 1/(x y w z)$\\
\oddrow 562&$x + y + y z/w + y/w + w + 2 z + z^2/w + 3 z/w + 2/w + w/y + 2 z/y + 2/y + z^2/(y w) + 2 z/(y w) + 1/(y w) + w^2/(x z) + 2 w/x + 2 w/(x z) + z/x + 2/x + 1/(x z) + w^2/(x y z) + 2 w/(x y) + 2 w/(x y z) + z/(x y) + 2/(x y) + 1/(x y z)$\\
\evnrow 566&$x + y^2/w + 2 y + 2 y z/w + 2 y/w + w + 2 z + z^2/w + 2 z/w + 1/w + y^2/(x z) + y^2/(x w) + 2 y w/(x z) + 4 y/x + 3 y/(x z) + 2 y z/(x w) + 2 y/(x w) + w^2/(x z) + 3 w/x + 4 w/(x z) + 3 z/x + 6/x + 3/(x z) + z^2/(x w) + 2 z/(x w) + 1/(x w) + w^2/(x y z) + 2 w/(x y) + 2 w/(x y z) + z/(x y) + 2/(x y) + 1/(x y z)$\\
\oddrow 582&$x + y w + 2 y + y/w + w + z + 1/z + 1/w + z/y + 2/y + 1/(y z) + z/x + 2/x + 1/(x z) + z/(x w) + 2/(x w) + 1/(x w z) + z/(x y) + 2/(x y) + 1/(x y z) + z/(x y w) + 2/(x y w) + 1/(x y w z)$\\
\evnrow 583&$x y + x y/z + x y/w + x + x/z + x/w + y + y/z + y/w + w z + w + z + 1/z + 1/w + w z/y + w/y + z/y + 1/y + w z/x + w/x + z/x + 1/x$\\
\oddrow 593&$x + y w + y z + 2 y + y z/w + y/w + w + w/z + z + 1/z + 2 z/w + 2/w + 1/y + z/(y w) + 1/(y w) + w/(x z) + 1/x + 1/(x z)$\\
\evnrow 597&$x + y w z + y w + y z + y + w z + w + z + 1/z + 1/w + 1/(w z) + y w z/x + 2 y w/x + y w/(x z) + 2 y z/x + 4 y/x + 2 y/(x z) + y z/(x w) + 2 y/(x w) + y/(x w z) + 2 w z/x + 4 w/x + 2 w/(x z) + 3 z/x + 6/x + 3/(x z) + z/(x w) + 2/(x w) + 1/(x w z) + w z/(x y) + 2 w/(x y) + w/(x y z) + z/(x y) + 2/(x y) + 1/(x y z)$\\
\oddrow 600&$x + y + z + 1/z + z/w + 2/w + 1/(w z) + z/y + 2/y + 1/(y z) + z/(y w) + 2/(y w) + 1/(y w z) + y z/x + 2 y/x + y/(x z) + y z/(x w) + 2 y/(x w) + y/(x w z) + w z/x + 2 w/x + w/(x z) + 3 z/x + 6/x + 3/(x z) + 2 z/(x w) + 4/(x w) + 2/(x w z) + w z/(x y) + 2 w/(x y) + w/(x y z) + 2 z/(x y) + 4/(x y) + 2/(x y z) + z/(x y w) + 2/(x y w) + 1/(x y w z)$\\
\evnrow 603&$x + y w z + 2 y w + y w/z + y z + 3 y + 2 y/z + y/w + y/(w z) + w z + 2 w + w/z + z + 2/z + 1/w + 1/(w z) + 1/y + 1/x + 1/(x z) + 1/(x w) + 1/(x w z) + 1/(x y) + 1/(x y z) + 1/(x y w) + 1/(x y w z)$\\
\oddrow 604&$x + y w + y z + 2 y + y z/w + y/w + w + w/z + z + 1/z + z/w + 1/w + w/(y z) + 1/y + 1/(y z) + w/(x z) + 2/x + 2/(x z) + z/(x w) + 2/(x w) + 1/(x w z) + w/(x y z) + 2/(x y) + 2/(x y z) + z/(x y w) + 2/(x y w) + 1/(x y w z)$\\
\evnrow 616&$x y + x w + x + 2 y + 2 w + z + 1/z + z/w + 2/w + 1/(w z) + z/y + 2/y + 1/(y z) + z/(y w) + 2/(y w) + 1/(y w z) + y/x + w/x + 1/x$\\
\oddrow 617&$x y + x y/z + x y/w + 2 x + 2 x/z + x/w + x/y + x/(y z) + 2 y + 2 y/z + 2 y/w + w z + w + z + 3/z + 2/w + 1/y + 1/(y z) + y/x + y/(x z) + y/(x w) + 1/x + 1/(x z) + 1/(x w)$\\
\evnrow 623&$x w + x z + 2 x + x z/w + x/w + x/y + y w + y z + 2 y + y z/w + y/w + w + w/z + z + 1/z + 2 z/w + 2/w + 1/y + y/x + y z/(x w) + y/(x w) + 1/x + z/(x w) + 1/(x w)$\\
\oddrow 625&$x w z + x w + x z + x + y w z + y w + y z + y + w z + w + z + 1/z + 1/w + 1/(w z) + 1/y + 1/(y z) + 1/(y w) + 1/(y w z) + 1/x + 1/(x z) + 1/(x w) + 1/(x w z) + 1/(x y) + 1/(x y z) + 1/(x y w) + 1/(x y w z)$\\
\evnrow 626&$x + y + y z/w + 2 y/w + y/(w z) + w + z + 1/z + z/w + 2/w + 1/(w z) + y^2 z/(x w) + 2 y^2/(x w) + y^2/(x w z) + 2 y z/x + 4 y/x + 2 y/(x z) + 3 y z/(x w) + 6 y/(x w) + 3 y/(x w z) + w z/x + 2 w/x + w/(x z) + 4 z/x + 8/x + 4/(x z) + 3 z/(x w) + 6/(x w) + 3/(x w z) + w z/(x y) + 2 w/(x y) + w/(x y z) + 2 z/(x y) + 4/(x y) + 2/(x y z) + z/(x y w) + 2/(x y w) + 1/(x y w z)$\\
\oddrow 637&$x y + x w + x z + 2 x + x w/y + x z/y + x/y + y + y/z + y/w + w + w/z + z + 1/z + z/w + 1/w + 2 w/y + 2 z/y + 2/y + 1/x + w/(x y) + z/(x y) + 1/(x y)$\\
\evnrow 642&$x + y^2/z + y^2/w + 2 y w/z + 4 y + 2 y/z + 2 y z/w + 2 y/w + w^2/z + 3 w + 2 w/z + 3 z + 1/z + z^2/w + 2 z/w + 1/w + y/(x z) + 2 w/(x z) + 2/x + 2/(x z) + w^2/(x y z) + 2 w/(x y) + 2 w/(x y z) + z/(x y) + 2/(x y) + 1/(x y z)$\\
\oddrow 653&$x w + x z + 2 x + x z/w + x/w + x w/(y z) + x/y + x/(y z) + y w + y z + 2 y + y z/w + y/w + w + w/z + z + 1/z + 2 z/w + 2/w + w/(y z) + 1/y + 1/(y z) + y/x + y z/(x w) + y/(x w) + 1/x + z/(x w) + 1/(x w)$\\
\evnrow 662&$x y w + x y z + x y + x w + 2 x z + 2 x + x z/y + x/y + 2 y w + 2 y z + 2 y + 2 w + 4 z + 1/z + 1/w + 1/(w z) + 2 z/y + 2/y + y w/x + y z/x + y/x + w/x + 2 z/x + 2/x + z/(x y) + 1/(x y)$\\
\oddrow 664&$x + y^2/(w z) + y + 2 y/z + 2 y/w + 2 y/(w z) + w + w/z + z + 2/z + z/w + 2/w + 1/(w z) + y^3/(x w z) + 3 y^2/(x z) + 3 y^2/(x w) + 4 y^2/(x w z) + 3 y w/(x z) + 6 y/x + 9 y/(x z) + 3 y z/(x w) + 9 y/(x w) + 6 y/(x w z) + w^2/(x z) + 3 w/x + 6 w/(x z) + 3 z/x + 12/x + 9/(x z) + z^2/(x w) + 6 z/(x w) + 9/(x w) + 4/(x w z) + w^2/(x y z) + 3 w/(x y) + 3 w/(x y z) + 3 z/(x y) + 6/(x y) + 3/(x y z) + z^2/(x y w) + 3 z/(x y w) + 3/(x y w) + 1/(x y w z)$\\
\evnrow 665&$x + y + y/w + w + z + 2 z/w + 2/w + w/y + 2 z/y + 2/y + z^2/(y w) + 2 z/(y w) + 1/(y w) + y^2/(x z) + y^2/(x w) + 3 y w/(x z) + 6 y/x + 3 y/(x z) + 3 y z/(x w) + 3 y/(x w) + 3 w^2/(x z) + 9 w/x + 6 w/(x z) + 9 z/x + 12/x + 3/(x z) + 3 z^2/(x w) + 6 z/(x w) + 3/(x w) + w^3/(x y z) + 4 w^2/(x y) + 3 w^2/(x y z) + 6 w z/(x y) + 9 w/(x y) + 3 w/(x y z) + 4 z^2/(x y) + 9 z/(x y) + 6/(x y) + 1/(x y z) + z^3/(x y w) + 3 z^2/(x y w) + 3 z/(x y w) + 1/(x y w)$\\
\oddrow 669&$x y/z + x w/z + 2 x + x/z + x w/y + x z/y + x/y + y^2/w + 2 y + 2 y/z + 2 y z/w + 2 y/w + w + 2 w/z + 2 z + 2/z + z^2/w + 2 z/w + 1/w + w/y + z/y + 1/y + y/(x z) + w/(x z) + 1/x + 1/(x z)$\\
\evnrow 679&$x w + 2 x z + 2 x + x z^2/w + 2 x z/w + x/w + x w/(y z) + x/y + x/(y z) + 2 y w + 3 y z + 3 y + y z^2/w + 2 y z/w + y/w + 2 w + w/z + 3 z + 1/z + z^2/w + 2 z/w + 1/w + w/(y z) + 1/y + 1/(y z) + y^2 w/x + y^2 z/x + y^2/x + 2 y w/x + 2 y z/x + 2 y/x + w/x + z/x + 1/x$\\
\oddrow 699&$x y w/z + 2 x y + x y/z + x y z/w + x y/w + x w/z + 3 x + x/z + 2 x z/w + 2 x/w + x/y + x z/(y w) + x/(y w) + 2 y w/z + 4 y + 2 y/z + 2 y z/w + 2 y/w + w + 2 w/z + z + 2/z + 3 z/w + 3/w + 1/y + z/(y w) + 1/(y w) + y w/(x z) + 2 y/x + y/(x z) + y z/(x w) + y/(x w) + w/(x z) + 2/x + 1/(x z) + z/(x w) + 1/(x w)$\\
\evnrow 717&$x y w/z + 2 x y + x y/z + x y z/w + x y/w + 2 x w/z + 3 x + 2 x/z + x z/w + x/w + x w/(y z) + x/y + x/(y z) + 2 y w/z + 4 y + 2 y/z + 2 y z/w + 2 y/w + w + 4 w/z + z + 4/z + 2 z/w + 2/w + 2 w/(y z) + 2/y + 2/(y z) + y w/(x z) + 2 y/x + y/(x z) + y z/(x w) + y/(x w) + 2 w/(x z) + 3/x + 2/(x z) + z/(x w) + 1/(x w) + w/(x y z) + 1/(x y) + 1/(x y z)$\\
\oddrow 723&$x y^2/(w z) + 3 x y/z + 2 x y/w + 2 x y/(w z) + 3 x w/z + 4 x + 4 x/z + x z/w + 2 x/w + x/(w z) + x w^2/(y z) + 2 x w/y + 2 x w/(y z) + x z/y + 2 x/y + x/(y z) + 2 y^2/(w z) + y + 5 y/z + 4 y/w + 4 y/(w z) + w + 4 w/z + z + 6/z + 2 z/w + 4/w + 2/(w z) + w^2/(y z) + 2 w/y + 2 w/(y z) + z/y + 2/y + 1/(y z) + y^2/(x w z) + 2 y/(x z) + 2 y/(x w) + 2 y/(x w z) + w/(x z) + 2/x + 2/(x z) + z/(x w) + 2/(x w) + 1/(x w z)$\\
\evnrow 730&$x^2/y + x^2 z/(y w) + x^2/(y w) + x w/z + 4 x + x/z + 3 x z/w + 3 x/w + x w/y + 2 x z/y + 3 x/y + x z^2/(y w) + 3 x z/(y w) + 2 x/(y w) + 2 y w/z + 5 y + 2 y/z + 3 y z/w + 3 y/w + w + 2 w/z + 2 z + 2/z + z^2/w + 5 z/w + 4/w + w/y + 2 z/y + 2/y + z^2/(y w) + 2 z/(y w) + 1/(y w) + y^2 w/(x z) + 2 y^2/x + y^2/(x z) + y^2 z/(x w) + y^2/(x w) + 2 y w/(x z) + 4 y/x + 2 y/(x z) + 2 y z/(x w) + 2 y/(x w) + w/(x z) + 2/x + 1/(x z) + z/(x w) + 1/(x w)$\\
\oddrow 734&$x y w z + 2 x y w + x y w/z + x y z + 3 x y + 2 x y/z + x y/w + x y/(w z) + x w z + 2 x w + x w/z + x z + 4 x + 3 x/z + 2 x/w + 2 x/(w z) + x/y + x/(y z) + x/(y w) + x/(y w z) + 2 y w z + 4 y w + 2 y w/z + 2 y z + 6 y + 4 y/z + 2 y/w + 2 y/(w z) + 2 w z + 4 w + 2 w/z + 2 z + 6/z + 4/w + 4/(w z) + 2/y + 2/(y z) + 2/(y w) + 2/(y w z) + y w z/x + 2 y w/x + y w/(x z) + y z/x + 3 y/x + 2 y/(x z) + y/(x w) + y/(x w z) + w z/x + 2 w/x + w/(x z) + z/x + 4/x + 3/(x z) + 2/(x w) + 2/(x w z) + 1/(x y) + 1/(x y z) + 1/(x y w) + 1/(x y w z)$\\
\evnrow 740&$x y^2/z + x y^2/w + 2 x y w/z + 4 x y + 3 x y/z + 2 x y z/w + 2 x y/w + x w^2/z + 3 x w + 4 x w/z + 3 x z + 6 x + 3 x/z + x z^2/w + 2 x z/w + x/w + x w^2/(y z) + 2 x w/y + 2 x w/(y z) + x z/y + 2 x/y + x/(y z) + 2 y^2/z + 2 y^2/w + 4 y w/z + 8 y + 6 y/z + 4 y z/w + 4 y/w + 2 w^2/z + 6 w + 8 w/z + 6 z + 6/z + 2 z^2/w + 4 z/w + 2/w + 2 w^2/(y z) + 4 w/y + 4 w/(y z) + 2 z/y + 4/y + 2/(y z) + y^2/(x z) + y^2/(x w) + 2 y w/(x z) + 4 y/x + 3 y/(x z) + 2 y z/(x w) + 2 y/(x w) + w^2/(x z) + 3 w/x + 4 w/(x z) + 3 z/x + 6/x + 3/(x z) + z^2/(x w) + 2 z/(x w) + 1/(x w) + w^2/(x y z) + 2 w/(x y) + 2 w/(x y z) + z/(x y) + 2/(x y) + 1/(x y z)$\\
\oddrow 742&$x^3/(y z) + 4 x^2/z + x^2/w + 3 x^2 w/(y z) + 3 x^2/y + 3 x^2/(y z) + 6 x y/z + 3 x y/w + 9 x w/z + 12 x + 9 x/z + 3 x z/w + 3 x/w + 3 x w^2/(y z) + 6 x w/y + 6 x w/(y z) + 3 x z/y + 6 x/y + 3 x/(y z) + 4 y^2/z + 3 y^2/w + 9 y w/z + 15 y + 9 y/z + 6 y z/w + 6 y/w + 6 w^2/z + 15 w + 12 w/z + 12 z + 6/z + 3 z^2/w + 6 z/w + 3/w + w^3/(y z) + 3 w^2/y + 3 w^2/(y z) + 3 w z/y + 6 w/y + 3 w/(y z) + z^2/y + 3 z/y + 3/y + 1/(y z) + y^3/(x z) + y^3/(x w) + 3 y^2 w/(x z) + 6 y^2/x + 3 y^2/(x z) + 3 y^2 z/(x w) + 3 y^2/(x w) + 3 y w^2/(x z) + 9 y w/x + 6 y w/(x z) + 9 y z/x + 12 y/x + 3 y/(x z) + 3 y z^2/(x w) + 6 y z/(x w) + 3 y/(x w) + w^3/(x z) + 4 w^2/x + 3 w^2/(x z) + 6 w z/x + 9 w/x + 3 w/(x z) + 4 z^2/x + 9 z/x + 6/x + 1/(x z) + z^3/(x w) + 3 z^2/(x w) + 3 z/(x w) + 1/(x w)$\\
\end{longtable}

\end{landscape}
\bibliographystyle{amsplain}
\bibliography{bibliography}
\end{document}